\documentclass[reqno]{amsart}  

\usepackage[latin2]{inputenc}

\oddsidemargin  10 true pt      
\evensidemargin 30 true pt      
\topmargin -0.75 true in        
\textheight 700pt         
\textwidth 400pt        
\usepackage{color}              
\usepackage[usenames,dvipsnames]{xcolor}
\usepackage{graphicx}
\usepackage[]{amsmath}
\usepackage{amsthm}
\usepackage{amssymb}
\usepackage{bbm}
\usepackage{comment}
\usepackage{microtype}
\usepackage[textsize=small]{todonotes}
\usepackage[colorlinks,citecolor=blue,urlcolor=blue,linkcolor=blue,pagecolor=blue,linktocpage=true,backref=true]{hyperref}

\usepackage{pdfsync}
\usepackage{tikz}
\usepackage{pgffor}

\usepackage{enumerate}
\newenvironment{enumeratei}{\begin{enumerate}[\upshape (i)]}{\end{enumerate}}
\newenvironment{enumeratea}{\begin{enumerate}[\upshape (a)]}{\end{enumerate}}

\setcounter{secnumdepth}{3} 
\setcounter{tocdepth}{2}    
\numberwithin{equation}{section}

\newtheorem{theorem}{Theorem}[section]
\newtheorem{lemma}[theorem]{Lemma}

\newtheorem{corollary}[theorem]{Corollary}

\newtheorem{claim}[theorem]{Claim}

\newtheorem{remark}[theorem]{Remark}

\def \ind     {1\!\!1}

\newcommand{\abs}[1]{\left|{#1}\right|}

\newcommand{\Prob}[1]    {\ensuremath{\mathbb{P}\left(#1\right)}}
\newcommand{\Expect}[1]  {\ensuremath{\mathbb{E}\left[#1\right]}}
\newcommand{\Var}[1]  {\ensuremath{\mathbb{D}^2\left[#1\right]}}

\newcommand*{\Ev}{{\mathbb{ E}}}
\newcommand*{\Pv}{{\mathbb{ P}}}

\def \PP    {\mathbf{P}}
\def \toinp    {\buildrel {\PP}\over{\longrightarrow}}
\def \toindis  {\buildrel {d}\over{\longrightarrow}}

\def \Hop     {\mathrm{Hop}}
\def \Flood     {\mathrm{Flood}}
\def \Diam     {\mathrm{Diam}}
\definecolor{MyDarkBlue}{rgb}{0,0.08,0.50}
\definecolor{BrickRed}{rgb}{0.65,0.08,0}

\hypersetup{
colorlinks=true,       
    linkcolor=MyDarkBlue,          
    citecolor=BrickRed,        
    filecolor=red,      
    urlcolor=cyan           
}



\newcommand{\KJ}[1]{\todo[color=SkyBlue,inline]{Juli: #1}}

\newcommand{\N}{\mathbb{N}}

\newcommand{\CA}{\mathcal {A}}

\newcommand{\CG}{\mathcal {G}}

\newcommand{\CL}{\mathcal {L}}

\newcommand{\CN}{\mathcal {N}}

\newcommand{\dist}{{\rm dist}}

\newcommand{\diam}{{\rm diam}}


\newcommand*{\ve}{\varepsilon}

\newcommand*{\al}{\alpha}

\newcommand*{\RAND}{\mathrm{RAN}_d}
\newcommand*{\EAND}{\mathrm{EAN}_d}
\newcommand*{\RAN}{\mathrm{RAN}}
\newcommand*{\EAN}{\mathrm{EAN}}

\newcommand*{\be}{\begin{equation}}
\newcommand*{\ee}{\end{equation}}
\newcommand*{\ba}{\begin{aligned}}
\newcommand*{\ea}{\end{aligned}}
\newcommand*{\barr}{\begin{array}{c}}
\newcommand*{\earr}{\end{array}}

\begin{document}

		\title[Degrees and distances in RANs and EANs]{Degrees and distances in random and evolving Apollonian networks}

		\subjclass[2000]{Primary: 05C80, 05C82, 05C12, 90B15, 60J80}
		\keywords{Random graphs, random networks, typical distances, diameter,
		hopcount, degree distribution}

		\author[Kolossv\'ary]{Istv\'an Kolossv\'ary$^{1,3}$}
		\address{$^1$Budapest University of Technology and Economics,
		Inter-University Centre for Telecommunications and Informatics 4028 Debrecen, Kassai \'ut 26.}
		\author[Komj\'athy]{J\'ulia Komj\'athy$^{2,4}$}
		\address{$^2$Department of Mathematics and
		    Computer Science, Eindhoven University of Technology, P.O.\ Box 513,
		    5600 MB Eindhoven, The Netherlands.}

		\author[V\'ag\'o]{Lajos V\'ag\'o$^{1,3}$}
\thanks{$^3$This publication was partially supported by the T\'AMOP-4.2.2.C-11/1/KONV-2012-0001**project.The project has been supported by the European Union,
co-financed by the European Social Fund. This research was supported in part by the grant KTIA-OTKA \# CNK 77778, funded by the Hungarian National Development Agency (NF\"U) from a source provided by KTIA.\\
$^4$ The work of JK is supported in part by The Netherlands Organisation for
Scientific Research (NWO)}
		\email{istvanko@math.bme.hu, j.komjathy@tue.nl, vagolala@math.bme.hu}

	\begin{abstract}
		This paper studies Random and Evolving Apollonian networks ($\RAN$s and $\EAN$s), in $d$ dimension for any $d\ge 2$, i.e. dynamically evolving random $d$ dimensional simplices looked as graphs inside an initial $d$-dimensional simplex.  We determine the limiting degree distribution in $\RAN$s and show that it follows a power law tail with exponent $\tau=(2d-1)/(d-1)$. We further show that the degree distribution in $\EAN$s converges to the same degree distribution if the simplex-occupation parameter in the $n$th step of the dynamics is $q_n\to 0$ and $\sum_{n=0}^\infty q_n =\infty$. This result gives a rigorous proof for the conjecture of Zhang et al \cite{Zhang2006EAN} that $\EAN$s tend to show similar behavior as $\RAN$s once the occupation parameter $q\to 0$.
		We also determine the asymptotic behavior of shortest paths in $\RAN$s and $\EAN$s for arbitrary $d$ dimensions. For $\RAN$s we show that the shortest path between two uniformly chosen vertices (typical distance), the flooding time of a uniformly picked vertex and the diameter of the graph after $n$ steps all scale as constant times $\log n$. We determine the constants for all three cases and prove a central limit theorem for the typical distances. We prove a similar CLT for typical distances in $\EAN$s.	\end{abstract}

	\maketitle

\section{Introduction}\label{sec::intro}

The construction of deterministic and random Apollonian networks originates from the problem of Apollonian circle packing: starting with three mutually tangent circles, we inscribe in the interstice formed by the three initial circles the unique circle that is tangent to all of them: this fourth circle is known as the inner Soddy-circle. Iteratively, for each new interstice its inner Soddy-circle is drawn. After infinite steps the result is an Apollonian gasket \cite{Boyd1982, Graham20031}.

An Apollonian network (AN) is the resulting graph if we place a node in the center of each circle and connect two nodes if and only if the corresponding circles are tangent. This model was introduced independently by Andrade et al. \cite{Andrade:2005:detAN} and Doye and Massen \cite{Doye2005} as a model for networks arising in real-life such as the network of internet cables or links, collaboration network or protein interaction networks. Apollonian networks serve a good model for these networks since they share many similar features that most of them have: a power-law degree distribution, a high clustering coefficient and small distances, usually referred to as the small-world property. Moreover, by construction, Apollonian networks also show high hierarchical structure: a property that is very common in e.g.\ social networks.

It is straightforward to generalize Apollonian packings to arbitrary $d$ dimensions with mutually tangent $d$ dimensional hyperspheres. Analogously, if each $d$-hypersphere corresponds to a vertex and vertices are connected by an edge if the corresponding $d$-hyperspheres are tangent, then we obtain a $d$-dimensional AN (see \cite{Zhang:2008:APLforAN, Zhang2006:ddimAN}).

The network arising by this construction is deterministic. Zhou et al. \cite{Zhou:2005:RAN} proposed to randomise the dynamics of the model such that in one step only one interstice is picked uniformly at random and filled with a new circle. This construction in $d$ dimensions yields a $d$ dimensional \emph{random Apollonian network} ($\RAN$) \cite{Zhang:2006:ddimRAN}. Using heuristic and rigorous arguments the results in \cite{Albenque:2008:hopinRAN, Cooper2013diamRAN, Darrasse:2007:RANstruct, Abbas2013diamRAN, Tsourakis:2012:wrongRANdiam, Zhang:2006:ddimRAN, Zhou:2005:RAN}  show that RANs have the above mentioned main features of real-life networks.

A different random version of the original Apollonian network was introduced by Zhang et al. \cite{Zhang2006EAN}, called \emph{Evolutionary Apollonian networks} ($\EAN$) where in every step \textit{every} interstice is picked and filled independently of each other with probability $q$. If an interstice is not filled in a given step, it can be filled in the next step again. We call $q$ the \emph{occupation parameter}. For $q=1$ we get back the deterministic AN model. It is conjectured in \cite{Zhang2006EAN} that if $q\to 0$ then $\EAN$s show similar features to $\RAN$s. To answer this issue, we investigate how  the network looks like when $q:=q_n$ depends on the time and tends to $0$ as the size of the network grows. In this setting, the interesting question is to determine the correct rate for $q_n$ that achieves that $\EAN$ shows similar behavior as $\RAN$.

\subsection*{Our contribution}
In this paper we give a rigorous proof for the power law degree distribution of $\RAN$s in any dimension $d$. Further, we show that if we let the occupation parameter of $\EAN$ to depend on $n$ and choose such that $q_n\to 0$ and $\sum_{n=0}^\infty q_n =\infty$, then the limiting degree distribution of $\EAN$s is a power law with the same exponent as for $\RAN$s. We also determine the clustering coefficient of $\RAN$s and $\EAN$s.
Further, we investigate the asymptotic behavior of shortest paths in random Apollonian networks in arbitrary $d$ dimensions with rigourous methods. We show that the shortest path between two uniformly chosen vertices (typical distance), the flooding time of a uniformly picked vertex and the diameter of the graph after $n$ steps all scale as  constant times $\log n$. We determine the constants for all three cases and prove a central limit theorem for the typical distances.
We introduce a different approach to describe the structure of the graph that enables us to unfold the hierarchical structure of $\RAN$s and $\EAN$s and is based on coding used in fractal-related methods.

Now we give the precise definition of the two models.

\subsubsection*{Random Apollonian networks}\label{subsec::RAN}

A random Apollonian network $\mathrm{RAN}_d(n)$ in $d$ dimensions can be constructed as follows. Initially at step $n=0$ start from the $d$-dimensional simplex with an additional vertex in the interior connected to all of the vertices of the simplex. Thus there are initially $d+1$ $d$-simplices, that we call  \emph{active cliques}. For $n\geq1$, pick
an active clique $C_n$ of $\mathrm{RAN}_d(n-1)$ \textit{uniformly at random}, insert a node $v_n$ in the interior of $C_n$ and connect $v_n$ with all the vertices of $C_n$. The newly added vertex $v_n$ forms new cliques with each possible choice of $d$ vertices of $C_n$. $C_n$ becomes inactive and these newly formed $d$-simplices become active. The resulting graph is $\mathrm {RAN}_d(n)$. At each step $n$ a $\mathrm{RAN}_d(n)$ has $n+d+2$ vertices and $nd + d+1$ active cliques.

There is a natural representation of RANs as evolving triangulations in two dimensions: take a planar embedding of the complete graph on four vertices as in Figure \ref{fig:ran1} and in each step pick a face of the graph uniformly at random, insert a node and connect it with the vertices of the chosen triangle. The result is a maximal planar graph. Hence, a $(\mathrm{RAN}_2(n))_{n\in \N}$ is equivalent to an increasing family of triangulations by successive addition of faces in the plane, called \emph{stack-triangulations}. Stack-triangultions were investigated in \cite{Albenque:2008:hopinRAN}  where the authors also considered typical properties under different weighted measures, (e.g.\ uniformly picked ones having $n$ faces).
Under a certain measure stack-triangulations with $n$ faces are an equivalent formulation of $\RAN_2(n)$, see \cite{Albenque:2008:hopinRAN} and references therein.

\begin{figure}[!h]
\begin{tikzpicture}
[scale=0.44, thick,
 initial/.style={circle, shade, ball color=black!40, scale=0.8}
]

\path[draw=black] (0,0) node[initial] (A) {} -- (5,3.5) node[initial] (O) {} -- (4,7) node[initial] (B) {} -- cycle;
\node[initial] at (8,0) (C) {};
\foreach \x in {A,B,O}
  \path[draw=black] (C) -- (\x);


\begin{scope}
[ xshift=9.5cm, halott/.style={circle, shade, ball color=black!30, scale=0.6} ]

\path[draw=black] (0,0) node[initial] (A) {} -- (5,3.5) node[initial] (O) {} -- (3.5,5.5) node[halott] (1) {} -- cycle;
\node[initial] at (4,7) (B) {};
\foreach \x in {A,O}
  \path[draw=black] (B) -- (\x);
\node[initial] at (8,0) (C) {};
\foreach \x in {A,B,O}
  \path[draw=black] (C) -- (\x);
\node[halott] at (3.5,5.5) (1) {};
\foreach \x in {A,B,O}
  \path[draw=black] (1) -- (\x);
\node[halott] at (2.2,2.8) (13) {};
\foreach \x in {A,O,1}
  \path[draw=black] (13) -- (\x);
\end{scope}

\begin{scope}
[ xshift=19cm, halott/.style={circle, shade, ball color=black!30, scale=0.6}]

\path[draw=black] (0,0) node[initial] (A) {} -- (8,0) node[initial] (C) {} -- (4,7) node[initial] (B) {} -- (A);
\node[initial] at (5,3.5) (O) {};
\foreach \x in {A,B,C}
  \path[draw=black] (O) -- (\x);
\node[halott] at (4.6,1.5) (3312) {};
\node[halott] at (3.7,3.9) (132) {};

\foreach \x/\y/\z in {3.5/5.5/1,4.9/2.6/3,2.2/2.8/13,3.8/2.4/31,6.6/0.7/33,2.8/1/331}
  \node[halott] at (\x,\y) (\z) {};
\path[draw=black] (O) -- (1) -- (13) -- (132);
\path[draw=black] (O) -- (3) -- (33) -- (331) -- (3312);
\path[draw=black] (3) -- (31);

\foreach \x in {1,13,31,3,331,33}
  \path[draw=black] (A) -- (\x);
\foreach \x in {3,33}
  \path[draw=black] (C) -- (\x);
\foreach \x in {13,132,31}
  \path[draw=black] (O) -- (\x);
\path[draw=black] (B) -- (1);
\foreach \x in {3,33}
  \path[draw=black] (3312) -- (\x);
\path[draw=black] (1) -- (132);
\path[draw=black] (3) -- (331);
\end{scope}

\end{tikzpicture}\label{fig:ran1}
\caption{A $\mathrm{RAN}_2(n)$ after $n=0,\, 2,\, 8$ steps}
\end{figure}

\subsubsection*{Evolutionary Apollonian networks}\label{subsec::EAN}

Given a sequence of occupation parameters $\{q_n\}_{n=1}^{\infty}, 0\le q_n\leq1$, an evolutionary Apollonian network $\mathrm{EAN}_d(n,\{q_n\})=\mathrm{EAN}_d(n)$ in $d$ dimensions can be constructed iteratively as follows. The initial graph is the same as for a $\mathrm{RAN}_d(0)$ and there are $d+1$ active $d$-simplices. For $n\geq1$, pick
each active clique of $\mathrm{EAN}_d(n-1)$ independently of each other with probability $q_n$. The set of chosen cliques $\mathcal C_n$ becomes inactive (the non-picked active cliques stay active) and for every clique $C\in\mathcal{C}_n$ we place a new node $v_n(C)$ in the interior of $C$ that we connect to all vertices of $C$. This new node $v_n(C)$ together with all possible choices of $d$ vertices from $C$ forms $d+1$ new cliques: these cliques are added to the set of active cliques for every $C\in \mathcal C_n$. The resulting graph is $\mathrm{EAN}_d(n)$. The case $q_n\equiv q$ was studied in \cite{Zhang2006EAN} where it was further suggested that for $q\to 0$ the graph is similar to a $\mathrm{RAN}_d(n)$. We prove their conjecture by showing that $\EAN$s obey the same power law exponent as $\RAN$s if $q_n\to 0$ and $\sum_{n=0}^\infty q_n =\infty$.

\subsubsection*{Structure of the paper}
In Section \ref{sec::MainRes} we state our main results and discuss their relation to other results in the area. Section \ref{sec::strucureofRAN} contains the most important observations about the structure of $\RAN$s: we work out an approach of coding the vertices of the graph that enables us to compare the structure of the $\RAN$ to a branching process and further, the distance between any two vertices in the graph is given entirely by the coding of these vertices. We also give a short sketch of proofs related to distances in this section. Then we prove rigorously the distance-related theorems in Section \ref{sec::ProofDist}.
Finally in Section \ref{sec::ProofDegDistr} we prove the results concerning the degree distributions.
\section{Main results}\label{sec::MainRes}

In this section we state our main results and compare it to related literature. First we start with theorems about distances in $\RAN$s and $\EAN$s, then state our theorems about the limiting degree distributions and clustering coefficient in $\RAN$s and $\EAN$s, respectively, and finally we compare our results to related literature.

\subsection{Distances in $\RAN$s and $\EAN$s}\label{subsec::dist}
In this subsection we state our results about distances in $\RAN$s and $\EAN$s.
First we define the three main quantities of our investigation:
fix $n$ and pick two vertices $u$ and $v$ uniformly at random from $\RAND(n)$. Denote by $\Hop_d(n,u,v)$ the hopcount between the vertices $u$ and $v$, i.e. the number of edges on (one of) the shortest paths between $u$ and $v$. The flooding time $\Flood_d(n,u)$ is the maximal hopcount from $u$, while the diameter $\Diam_d(n)$ is the maximal flooding time, formally
\begin{equation*}
\Flood_d(n,u)=\max_v \Hop_d(n,u,v) \;\text{ and }\; \Diam_d(n)=\max_{u,v}\Hop_d(n,u,v).
\end{equation*}
Whenever possiblem $d$, $u$ and $v$ are suppressed from the notation. We introduce some important notation first.
Let
\begin{equation}\label{def:y}
Y_d:= \sum_{i=1}^{d+1} X_i
\end{equation}
where $(X_i)_{i=1}^{d+1}$ is a collection of independent geometrically distributed random variables with success probability
$\frac{i}{d+1}$.
We further introduce
\begin{equation}\label{def:mud}
\mu_d:=\Ev[Y_d]=(d+1) H(d+1), \quad \sigma_d^2:=\Var{Y_d},
\end{equation}
where $H(d)=\sum_{i=1}^{d}1/i.$
The Large Deviation rate function of $Y_d$ is given by
\be\label{def:rate-function} I_d(x):= \sup_{\lambda\in\mathbb{R}} \left\{ \lambda x - \log \left( \Ev\left[\mathrm{e}^{\lambda Y_d}\right]\right)\right\}. \ee
The next theorem describes the asymptotic behavior of the typical distances in $\RAN_d(n)$.
\begin{theorem}[Typical distances in $\RAN$s]\label{thm::MainRes}
The hopcount between the vertices corresponding to two uniformly chosen active cliques in a $\mathrm{RAN}_d(n)$ satisfies a central limit theorem (CLT) of the form
\begin{equation}\label{eqn::mainHop}
\frac{\Hop_d(n)- \frac{2}{\mu_d}\frac{d+1}{d}\log n} {\sqrt{2 \frac{\sigma^2_d+\mu_d}{\mu^3_d }\frac{d+1}{d}\log n}} \toindis Z,
\end{equation}
where $\mu_d,\sigma_d^2$ as in \eqref{def:mud} and $Z$ is a standard normal random variable.

Further, the same CLT is satisfied for the distance between two vertices that are picked independently with the size-biased probabilities given by
\be\label{eq:biased}\Pv(v\mbox{ is picked }| D_v(n)=k ) = \frac{(d-1)k-d^2+d+2}{d n +d+1}.\ee

\end{theorem}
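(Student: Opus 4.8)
The plan is to read off $\Hop_d(n,u,v)$ from the coding of active cliques by a growing random tree, reduce it to a renewal counting functional of the depth of a random leaf, and then assemble a depth central limit theorem with the renewal central limit theorem. Following Section~\ref{sec::strucureofRAN}, identify the active cliques of $\RAND(n)$ with the leaves of a random tree $\CT_n$ that starts from $d+1$ leaves and in which, at each step, a uniformly chosen leaf acquires $d+1$ children; two independent uniformly chosen active cliques are then two i.i.d.\ uniformly chosen leaves $\ell_u,\ell_v$ of $\CT_n$. The structural distance lemma of Section~\ref{sec::strucureofRAN} expresses $\Hop_d(n,u,v)$, up to an additive term that is $o_{\Pv}(\sqrt{\log n})$ (and in fact bounded in probability), as a sum of two independent pieces, one attached to the ancestral line of $\ell_u$ in $\CT_n$ and one to that of $\ell_v$. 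Each piece is a renewal counting function $N(L)$, where $L$ is the depth of the leaf in $\CT_n$ and the i.i.d.\ renewal increments are distributed as $Y_d$: climbing the ancestral line, one graph hop is gained over a coupon-collector-type number of levels, namely, while $i$ of the $d+1$ vertices of the current simplex are still ``original'', one more level is climbed and with probability $i/(d+1)$ one of these vertices is discarded, so the successive waiting times are geometric with success probabilities $i/(d+1)$ for $i=d+1,d,\dots,1$, whose sum is exactly $Y_d$ of \eqref{def:y}. Since the most recent common ancestor of $\ell_u$ and $\ell_v$ lies at depth bounded in probability, the two ancestral lines are asymptotically independent and their shared part contributes only $o_{\Pv}(\sqrt{\log n})$.

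First I would establish the depth CLT for $\CT_n$: if $L_n$ denotes the depth of a uniformly chosen leaf of $\CT_n$, then
\[
\frac{L_n-\frac{d+1}{d}\log n}{\sqrt{\frac{d+1}{d}\log n}}\ \toindis\ Z .
\]
The mean follows from the exact recursion $\Ev[L_n]=\Ev[L_{n-1}]+\frac{d+1}{dn+d+1}$ for the expected depth of a random leaf (obtained by tracking the total depth $\sum_{\ell}\mathrm{depth}(\ell)$ against the deterministic leaf count $dn+d+1$), whence $\Ev[L_n]=\sum_{k=1}^{n}\frac{d+1}{dk+d+1}+O(1)\sim\frac{d+1}{d}\log n$; the Gaussian limit with matching variance, so that $L_n$ is ``Poisson-like'', comes either from embedding $\CT_n$ in continuous time as a Crump--Mode--Jagers process in which each leaf, after an independent $\mathrm{Exp}(1)$ holding time, splits into $d+1$ leaves one level deeper --- the process has Malthusian parameter $d$, so $n$ insertions are reached at time $\frac1d\log n+O_{\Pv}(1)$, while the spine of a size-biased leaf accumulates levels at rate $d+1$, making its depth an asymptotically normal variable of mean and variance $\sim\frac{d+1}{d}\log n$ --- or from a P\'olya-urn / contraction-method analysis of the depth recursion. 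The $O_{\Pv}(1)$ fluctuation of the embedding time never reaches the $\sqrt{\log n}$ scale and is harmless.

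Next I would apply the renewal central limit theorem. Since $Y_d$ is a finite sum of geometric variables it has all exponential moments in a neighbourhood of $0$ (this is what makes the rate function \eqref{def:rate-function} finite and is what the structural lemma uses to discard shortcuts), in particular finite mean $\mu_d$ and variance $\sigma_d^2$; hence a renewal process with $Y_d$-increments has counting function obeying $\bigl(N(t)-t/\mu_d\bigr)/\sqrt{t\sigma_d^2/\mu_d^3}\toindis Z$. Feeding the random index $t=L_n$ into $N$ and using an Anscombe-type random-index argument (or conditioning on $L_n$ and multiplying characteristic functions), the two sources of fluctuation add: $\Var{N(L_n)}\sim\mu_d^{-2}\Var{L_n}+\mu_d^{-3}\sigma_d^2\,\Ev[L_n]=\frac{d+1}{d}\log n\cdot\frac{\mu_d+\sigma_d^2}{\mu_d^{3}}$, using $\Var{L_n}\sim\Ev[L_n]$. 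Thus the contribution of $\ell_u$ alone satisfies a CLT centred at $\frac1{\mu_d}\frac{d+1}{d}\log n$ with variance $\frac{\sigma_d^2+\mu_d}{\mu_d^3}\frac{d+1}{d}\log n$, and adding the independent contribution of $\ell_v$ doubles both the centring and the variance; the overlap term and the structural-lemma error are $o_{\Pv}(\sqrt{\log n})$ and drop out, giving precisely \eqref{eqn::mainHop}.

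For the size-biased statement, an elementary induction over the construction shows that the number $a_v(n)$ of active cliques containing a vertex $v$ equals $(d-1)D_v(n)-d^2+d+2$: it equals $d+1$ the moment $v$ is inserted, and each subdivision of an active clique containing $v$ increases $a_v$ by $d-1$ while increasing $D_v$ by $1$. Consequently the size-biased law \eqref{eq:biased} is proportional to $a_v(n)$, so picking a vertex from it is the same as picking a uniformly random incident (active clique, vertex) pair and reporting the vertex; since the hopcount from a clique to the rest of $\RAND(n)$ changes by at most a bounded amount when the apex is replaced by any of the $d+1$ mutually adjacent vertices of the clique, the renewal description above applies verbatim and yields the same limit \eqref{eqn::mainHop}. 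The hard part of the whole argument is not this probabilistic assembly but the structural distance lemma of Section~\ref{sec::strucureofRAN}: one must prove that graph distance in $\RAND(n)$ is governed, up to a correction that is $o_{\Pv}(\sqrt{\log n})$, by the renewal-along-the-ancestral-line picture --- that no detour through young, high-degree vertices can save more than $O(1)$ hops, and that the ancestral route is within $O(1)$ of a geodesic --- with error bounds uniform enough that everything discarded stays safely below the $\sqrt{\log n}$ scale.
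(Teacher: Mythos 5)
Your proposal is correct and follows essentially the same route as the paper: the word/tree coding of active cliques, the identification of the per-branch hop count as a coupon-collector renewal counting function with $Y_d$ increments, the depth CLT via the Bernoulli decomposition $\Ev[L_n]=\sum_k\frac{d+1}{dk+d+1}$ (equivalently the continuous-time branching embedding), the Anscombe-type composition giving per-branch variance $\frac{\sigma_d^2+\mu_d}{\mu_d^3}\frac{d+1}{d}\log n$, the tightness of the MRCA depth, and the size-biasing via the active-clique count $(d-1)k-d^2+d+2$. The structural input you defer to Section \ref{sec::strucureofRAN} is exactly what the paper supplies there (Lemma \ref{lemma::neighbors} and Claim \ref{cl:optimal}, which in fact give the decomposition as an exact identity rather than only up to $o_{\Pv}(\sqrt{\log n})$).
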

The next theorem describes the asymptotic behaviour of the flooding time and the diameter:
\begin{theorem}[Diameter and flooding time in $\RAN$s]\label{thm::Diam}
Define $\widetilde c_d$ as the unique solution with $\tilde c_d>\frac{d+1}{d}$ to the equation
\begin{equation*}
f_d(c):=c-\frac{d+1}{d}- c\log\Big(\frac{d}{d+1} c\Big)=-1.
\end{equation*}
Then as $n\to \infty$ with high probability
\be\ba \label{eqn::mainFlood&Diam}
\frac{\Diam_d(n)}{\log n} &\toinp 2\tilde\alpha\tilde\beta\frac{\tilde c_d}{\mu_d},\\
\frac{\Flood_d(n)}{\log n} &\toinp \frac{1}{\mu_d}\left(\frac{d+1}{d}+\tilde\alpha\tilde\beta\tilde c_d\right).
\ea
\ee
where $(\tilde\alpha,\tilde\beta)\in (0,1]\times[1,\frac{\mu_d}{d+1}]$ is the solution of the maximization problem with the following constraint:
\begin{equation}\label{eqn::defalpha*beta}
\max \{ \alpha \beta :  1+f(\alpha \tilde c_d)-\alpha\beta \frac{\tilde c_d}{\mu_d}I_d\left(\frac{\mu_d}{\beta}\right)=0 \},
\end{equation}
where $\mu_d,\sigma^2_d$ as in \eqref{def:mud} and $I_d(x)$ as in \eqref{def:rate-function}.
\end{theorem}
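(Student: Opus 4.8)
The plan is to exploit the coding of vertices developed in Section \ref{sec::strucureofRAN}, which, as advertised there, identifies the distance between vertices with a combinatorial functional of their codes and couples the growth of $\RAN_d(n)$ with a branching process. Each active clique at time $n$ corresponds to a leaf of a $(d+1)$-ary (random recursive) tree, and the generation of a clique born at time $m$ is concentrated around $\frac{1}{\log(d+1)}\cdot\frac{d+1}{d}\log m$ with exponential upper tails governed by an appropriate large-deviation rate; more precisely, the number of cliques whose code has length (generation) roughly $c\log n$ grows like $n^{1+f_d(c)}$, which is exactly why $f_d$ and the threshold $\tilde c_d$ (the largest $c$ with $1+f_d(c)\ge 0$, equivalently $f_d(c)=-1$ at the boundary point beyond $\frac{d+1}{d}$) enter the statement. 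This is the ``fastest direction'' in which a vertex can be deep in the tree while still existing with high probability.

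First I would establish the flooding time. The hopcount from a typical vertex $u$ to a clique at generation $g$ along its code is, by the distance formula, a sum of $g$ i.i.d. blocks each distributed as a truncated/weighted version of the $X_i$'s summing to mean $\mu_d$; so reaching the deepest existing cliques contributes $\tilde\alpha\tilde\beta\tilde c_d/\mu_d\cdot\log n$ where $\tilde\beta$ captures that along an \emph{atypically long} surviving line the per-generation increments are tilted (the constraint in \eqref{eqn::defalpha*beta} is precisely the $\beta$-tilted large-deviation balance $1+f_d(\alpha\tilde c_d)=\alpha\beta\frac{\tilde c_d}{\mu_d}I_d(\mu_d/\beta)$, and $\alpha$ interpolates over the birth time of the line). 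Adding the $\frac{d+1}{d}\frac{1}{\mu_d}\log n$ cost of getting from $u$ down to a typical clique gives the stated constant; the lower bound on $\Flood_d(n)$ comes from a first-moment/second-moment argument showing such deep cliques exist, and the upper bound from a union bound over all cliques of the probability that their distance to $u$ exceeds the claimed value, using the large-deviation rate $I_d$ to beat the $n^{1+f_d(c)}$ entropy factor.

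Then the diameter: $\Diam_d(n)=\max_{u,v}\Hop_d(n,u,v)$ is realized (up to lower order) by two cliques that are both on maximally long surviving lines emanating from near the root in ``different subtrees,'' so that their distance is essentially the sum of the two one-sided deep distances, i.e. twice $\tilde\alpha\tilde\beta\tilde c_d/\mu_d\cdot\log n$. The upper bound is again a union bound over pairs $(u,v)$ using the distance formula together with independence of the two code-paths above their common ancestor; the matching lower bound requires exhibiting, with high probability, two disjoint deep lines, which follows from the branching-process coupling (the root has $d+1$ children, each of which independently roots a subtree of linear size, so with high probability at least two of them separately achieve the optimal depth). The constant $2\tilde\alpha\tilde\beta\tilde c_d/\mu_d$ is thus forced by the same optimization \eqref{eqn::defalpha*beta}.

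The main obstacle I expect is the upper bound on the diameter, where one must control $\max_{u,v}$ over quadratically many pairs of a sum of two dependent deep-distance contributions: the two codes share a prefix, and the increments are not bounded, so the union bound has to be run against a sharp two-sided large-deviation estimate for the block sums $Y_d$ (hence the appearance of the full Legendre transform $I_d$ rather than just the mean $\mu_d$), and one must optimize jointly over the birth time ($\alpha$), the tilt ($\beta$), and the split of the depth budget between the two sides. Verifying that the optimizer lands in the stated region $(0,1]\times[1,\mu_d/(d+1)]$ and that $f_d$ indeed has a unique root $\tilde c_d>\frac{d+1}{d}$ (monotonicity/convexity of $f_d$ on that range) is the technical heart; everything else is a combination of the coding identity, standard random-recursive-tree concentration, and Cramér-type bounds applied to the i.i.d. block decomposition of the hopcount.
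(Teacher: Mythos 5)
Your proposal follows essentially the same route as the paper: the code/branching-process coupling, the first-moment count $n^{1+f_d(c)}$ of cliques at depth $c\log n$ via the Poisson-type rate function $f_d$, a Cram\'er bound with rate $I_d$ for the block count $H_k$ along an atypically deep code, a second-moment lower bound and Markov/union upper bound yielding the constrained maximization $\max\{\alpha\beta: 1+f_d(\alpha\tilde c_d)-\alpha\beta\tfrac{\tilde c_d}{\mu_d}I_d(\mu_d/\beta)=0\}$, and then flooding $=$ typical $+$ max and diameter $=2\times$ max. Two small corrections: the typical generation is $\tfrac{d+1}{d}\log m$ without the stray $1/\log(d+1)$ factor (as your own $n^{1+f_d(c)}$ count already forces), and $\alpha$ parametrizes the depth fraction $\alpha\tilde c_d\log m$ rather than a birth time; also the diameter upper bound is lighter than you fear, since the distance splits as a sum of two one-sided block counts each dominated by the single-clique maximum once one notes that the common ancestor's generation has a proper limiting distribution.
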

\begin{remark}\normalfont
Observe that the set of $(\alpha,\beta)$ pairs that satisfy \eqref{eqn::defalpha*beta} is non-empty since for $\alpha=\beta=1$ by definition $f(\tilde c_d)=-1$ and $I_d(\mu_d)=0$.
\end{remark}
Finally, the next theorem describes the asymptotic behavior of the typical distances in $\EAN_d(n)$.
\begin{theorem}[Typical distances in $\EAN$s]\label{thm::MainResEAN}
Suppose the sequence of occupation parameters $\{q_n\}$ satisfies $\sum_{n\in \N}q_n=\infty$ and $\sum_{n\in \N}q_n(1-q_n)=\infty$.
Then the hopcount between vertices corresponding to two uniformly chosen active cliques in a $\mathrm{EAN}_d(n)$ satisfies a central limit theorem of the form
\begin{equation}\label{eqn::mainHopEAN}
\frac{\Hop_d(n)- \frac{2}{\mu_d}\sum\limits_{i=1}^n q_i} {\sqrt{2 \frac{\sigma^2_d+\mu_d}{\mu^3_d }\sum\limits_{i=1}^n q_i(1-q_i)}} \toindis Z,
\end{equation}
where $\mu_d,\sigma_d^2$ as in \eqref{def:mud} and $Z$ is a standard normal random variable.

Further, the same CLT is satisfied for the distance between two vertices that are picked independently with the size-biased probabilities given by
\be\label{eq:biasedEAN}\Pv(v\mbox{ is picked }| D_v(n)=k | |V(n)|) = \frac{(d-1)k-d^2+d+2}{d |V(n)| +d+1}.\ee

\end{theorem}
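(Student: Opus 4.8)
The plan is to re-run the argument behind Theorem~\ref{thm::MainRes} essentially verbatim, the only genuinely new input being the behaviour of the generation of a typical active clique under the $\EAN$ dynamics. First I would note that, as a rooted combinatorial object, the clique-refinement tree of an $\EAN_d(n)$ is of exactly the same type as that of a $\RAN$ -- every internal node has $d+1$ children -- and, as worked out in Section~\ref{sec::strucureofRAN}, the graph distance between two vertices is a deterministic function of the codings of the two cliques to which they belong, with no reference to \emph{when} the subdivisions were performed. Hence the decomposition of the hopcount used for $\RAN$s carries over: writing $G_n^{(u)},G_n^{(v)}$ for the depths in the refinement tree of the active cliques attached to the two sampled vertices $u,v$, one has $\Hop_d(n)=\Hop^{(u)}+\Hop^{(v)}+O_{\mathbb{P}}(1)$, where $\Hop^{(u)}$ is the number of renewals before ``time'' $G_n^{(u)}$ of a renewal walk whose steps are i.i.d.\ copies of $Y_d$ (see \eqref{def:y}), and similarly for $v$; the $O_{\mathbb{P}}(1)$ accounts for the common part of the two ancestral lines, which is tight because two uniformly chosen active cliques have a most recent common ancestor at bounded depth, and for the same reason $(G_n^{(u)},\Hop^{(u)})$ and $(G_n^{(v)},\Hop^{(v)})$ are asymptotically independent.

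Second, I would analyse the law of $G_n:=G_n^{(u)}$ under the $\EAN_d(n)$ dynamics. Since at step $i$ every active clique is independently subdivided with probability $q_i$, following backwards the ancestral line of a typical active clique shows that $G_n$ is -- up to the size-biasing intrinsic to choosing a \emph{uniform} active clique at time $n$, equivalently to conditioning the underlying branching process on its random total size at time $n$ -- governed by a sum $\sum_{i=1}^n B_i$ of independent Bernoulli$(q_i)$ random variables; this size-biasing is handled exactly as for $\RAN$s and yields the centering and scaling appearing in \eqref{eqn::mainHopEAN}. Because the summands are uniformly bounded, the Lindeberg--Feller theorem applies without further hypotheses: $\sum_i q_i=\infty$ forces $G_n\to\infty$ (so that the normalised $\Hop_d(n)$ is genuinely Gaussian in the limit rather than tight), while $\sum_i q_i(1-q_i)=\infty$ -- the divergence of the variance of $\sum_i B_i$ -- is precisely the condition making $G_n$ obey a central limit theorem, the Lindeberg condition itself being automatic from $|B_i|\le1$. (The hypothesis $\sum_i q_i(1-q_i)=\infty$ cannot be omitted on top of $\sum_i q_i=\infty$: otherwise one sits in the degenerate regime where $q_i\to1$ along the relevant indices, $G_n$ equals $n$ minus a tight random variable, and no CLT holds.)

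Third, I would glue the renewal mechanism to the CLT for $G_n$ by an Anscombe/renewal central limit argument, exactly as for $\RAN$s. Conditionally on $G_n^{(u)}$ the renewal count $\Hop^{(u)}$ equals $G_n^{(u)}/\mu_d$ to leading order, with Gaussian fluctuations of variance $\sigma_d^2 G_n^{(u)}/\mu_d^{3}$; since $G_n^{(u)}$ concentrates on a diverging scale one may replace it by its mean inside this variance and then superpose the independent Gaussian fluctuation of $G_n^{(u)}/\mu_d$, of variance $\Var{G_n^{(u)}}/\mu_d^{2}$, obtaining a Gaussian limit for $\Hop^{(u)}$ with centering and variance dictated by $\mathbb{E}[G_n^{(u)}]$ and $\Var{G_n^{(u)}}$; adding the two asymptotically independent copies for $u$ and $v$ produces the factor $2$ in \eqref{eqn::mainHopEAN}. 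For the size-biased statement \eqref{eq:biasedEAN} I would first condition on the realised vertex-set size $|V(n)|$ (random, unlike for $\RAN$s), observe that sampling $v$ with probability proportional to the number of active cliques containing $v$ is the same as picking a uniform active clique and then a vertex of it, and reuse the argument above -- precisely as Theorem~\ref{thm::MainRes} disposes of its size-biased statement for $\RAN$s.

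The step I expect to be the main obstacle is the size-biasing in the second point: making rigorous, uniformly in $n$ and over the whole admissible range of $\{q_i\}$, that the depth $G_n$ of the clique attached to a uniformly sampled active clique can be coupled -- up to a tight correction and with the correct centering and scaling -- to the clean independent-Bernoulli sum, i.e.\ that conditioning the branching description of the $\EAN$ on its total population at time $n$ does not distort the spine depth on the scale $\sqrt{\sum_i q_i(1-q_i)}$. A secondary technical point is the Anscombe step itself: verifying the tightness and uniform integrability needed to pass from the conditional CLT given $G_n$ to the unconditional one and to see that the renewal fluctuations and the depth fluctuations add rather than interact; here the rigorous $\RAN$ proof of Theorem~\ref{thm::MainRes} should do essentially all of the work, as the renewal-with-random-horizon structure is the same.
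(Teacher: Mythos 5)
Your architecture is the same as the paper's: represent the depth of a uniformly chosen active clique as a sum of independent Bernoulli indicators over the $n$ steps, get a CLT for it from Lindeberg (this is exactly where both hypotheses $\sum_n q_n=\infty$ and $\sum_n q_n(1-q_n)=\infty$ enter, as you say), show that the most recent common ancestor of the two sampled cliques sits at tight depth, and then reuse the coupon-collector/renewal (Anscombe-type) machinery of Theorem \ref{thm::MainRes} to convert depths into hopcounts; the size-biased statement is again reduced to uniform active cliques via the clique-counting identity. One remark on the ``$O_{\mathbb{P}}(1)$ for the common part'': you assert it, but it is precisely where the paper spends most of its effort, proving that $\tau_{U\wedge V}$ has a proper limit via the product formula \eqref{eq:dist-of-tau-uv} and the a.s.\ summability of the merge probabilities, which in turn needs $N(n)\to\infty$, obtained from $\sum_n q_n=\infty$ by Borel--Cantelli. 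That should not be treated as free.

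The genuine gap is the step you yourself flag as the main obstacle, and your proposed resolution of it is internally inconsistent. You claim the spine depth is $\sum_i \mathrm{Bern}(q_i)$ ``up to the size-biasing intrinsic to choosing a uniform active clique'', and that handling this bias ``exactly as for $\RAN$s'' yields the centering in \eqref{eqn::mainHopEAN}. But handling it as for $\RAN$s means the backward-uniformity argument behind \eqref{eqn::distofG_m}: the time-$i$ ancestor of a uniformly chosen time-$n$ active clique is uniform among the $A(i)$ cliques active at time $i$, of which $(d+1)Z_i$ are newborn ($Z_i$ being the number of splits at step $i$), so the per-step success probability is $(d+1)Z_i/A(i)\approx (d+1)q_i/(1+dq_i)$, not $q_i$ (they agree only at $q_i=1$; for $\RAN$ the analogous quantity is $(d+1)/(di+1)$, i.e.\ $d+1$ times the per-clique split probability). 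Equivalently, the unbiased forward spine that splits with probability $q_i$ reaches a fixed time-$n$ clique with probability proportional to $(d+1)^{-G}$, so the uniform clique's depth is the $(d+1)^{G}$-tilt of $\sum_i\mathrm{Bern}(q_i)$, which is $\sum_i\mathrm{Bern}\bigl((d+1)q_i/(1+dq_i)\bigr)$. So you cannot simultaneously treat the bias as in the $\RAN$ case and recover the centering $\frac{2}{\mu_d}\sum_i q_i$: you need either an argument for why the bias washes out here (I do not see one) or an adjusted centering and variance. For comparison, the paper's own proof does not confront this point at all --- it takes the unbiased forward spine at face value and declares $\Pv(\widetilde\ind_i=1)=q_i$ --- so your instinct that this is the dangerous step is sound, but as written your sketch leaves the decisive computation unresolved, and the resolution you gesture at points away from, not toward, the stated normalization.
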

\begin{remark} \normalfont Note that in this theorem $q_n$ might or might not tend to $0$. The second criterion rules out the case that the $q_n\to 1$ and so the graph becomes essentially deterministic.
\end{remark}

\subsection{Degree distribution and clustering coefficient}\label{subsec::degreedistr}

First we describe the limiting distribution of $\RAND$.
Let us denote by $\widetilde N_k(n)$ and  $\widetilde p_k(n)$ the number and the empirical proportion of vertices with degree $k$ at time $n$ respectively, i.e.
\[
  \widetilde p_k(n):=\frac{\widetilde N_k(n)}{n+d+2}:=\frac{1}{n+d+2}\sum_{i=1}^{n+d+2}{\ind \{D_i(n)=k\} },
\]
where $D_i(n)$ stands for the degree of vertex $i$ after the $n$-th step.
Our first theorem describes that this empirical distribution tends to a proper distribution in the $\ell_\infty$-metric:
\begin{theorem}[Degree distribution for $\RAN$s]\label{739}
There exists a probability distribution $\{ p_k\}_{k=d+1}^{\infty}$ and a constant $c$ for which
\[
  \Pv \left( \max_k \left|  \widetilde p_k(n)-p_k\right| \geq c\sqrt{\frac{\log n}{n}}\right)=o(1).
\]
Further, $p_k$ follows a power law with exponent $(2d-1)/(d-1) \in (2,3]$,  more precisely
\be \label{eq:pkdef}
  p_k=\frac{d}{2d+1}\frac{\Gamma(k-d+\frac{2}{d-1})}{\Gamma(1+\frac{2}{d-1})}\: \frac{\Gamma(2+\frac{d+2}{d-1})}{\Gamma(k+1-d +\frac{d+2}{d-1})}=k^{- \frac{2d-1}{d-1}} (1+ o(1)),
\ee
where $\Gamma(x)$ is the Gamma function and we used the property that $\Gamma(t+a)/\Gamma(t)=t^a(1+o(1)).$
\end{theorem}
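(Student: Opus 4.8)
The plan is to set up a recursion for the expected number of degree-$k$ vertices $\Ev[\widetilde N_k(n)]$, solve it to identify the limit $p_k$, then upgrade the expectation statement to a concentration statement via a martingale (Azuma–Hoeffding) argument. The crucial combinatorial fact is this: in $\RAN_d$, a vertex $v$ of current degree $k$ is incident to exactly $(d-1)k - d^2 + d + 2$ active cliques (one checks this for the $d+2$ initial vertices separately, and then observes that each time a new vertex is inserted into a clique containing $v$, the degree of $v$ goes up by $1$ and the number of active cliques at $v$ goes up by $d-1$, since the clique $C_n$ through $v$ is destroyed and $d$ of the $d+1$ new cliques contain $v$). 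Since at step $n$ there are $dn + d + 1$ active cliques in total and one is chosen uniformly, the probability that the new vertex $v_{n+1}$ attaches to $v$ is $\frac{(d-1)k - d^2 + d + 2}{dn + d + 1}$ — exactly the size-biased weight in \eqref{eq:biased}. This gives the recursion
\be\label{eq:Nkrec}
\Ev[\widetilde N_k(n+1) \mid \mathcal F_n] = \widetilde N_k(n) + \frac{(d-1)(k-1) - d^2 + d + 2}{dn + d + 1}\,\widetilde N_{k-1}(n) - \frac{(d-1)k - d^2 + d + 2}{dn + d + 1}\,\widetilde N_k(n),
\ee
for $k > d+1$, with the boundary term at $k = d+1$ accounting for the newly inserted vertex (which has degree exactly $d+1$).

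Next I would pass to the limiting proportions. Writing $a_k := (d-1)k - d^2 + d + 2$ and positing $\Ev[\widetilde N_k(n)] \sim p_k (n + d + 2)$, the recursion \eqref{eq:Nkrec} forces the fixed-point relation
\be\label{eq:pkfixed}
p_k\Big(1 + \frac{a_k}{d}\Big) = p_{k-1}\frac{a_{k-1}}{d} \quad (k > d+1), \qquad p_{d+1}\Big(1 + \frac{a_{d+1}}{d}\Big) = 1,
\ee
which telescopes into a ratio of Gamma functions; matching constants and using $\Gamma(t+a)/\Gamma(t) = t^a(1+o(1))$ yields precisely \eqref{eq:pkdef}, and in particular the power-law exponent $a_k/d \sim \frac{d-1}{d}k$ gives $p_k \asymp k^{-(1 + d/(d-1))} = k^{-(2d-1)/(d-1)}$. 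The fact that $\{p_k\}$ is a genuine probability distribution (sums to one) can be checked either directly from the telescoping product or, more cleanly, by summing \eqref{eq:pkfixed} over $k$ and noting the mass is conserved. To make the heuristic "$\Ev[\widetilde N_k(n)] \sim p_k(n+d+2)$" rigorous I would prove by induction on $k$ (and, for fixed $k$, a Grönwall/generating-function estimate in $n$) that $|\Ev[\widetilde N_k(n)] - p_k(n+d+2)| \le C_k$ uniformly in $n$, with the constants $C_k$ controlled well enough — say $C_k$ polynomial in $k$ — that the error is negligible against the concentration scale $\sqrt{n\log n}$ even after taking the maximum over $k$ (note only $k \lesssim n$ contribute, since $\widetilde N_k(n) = 0$ for $k$ too large, so the maximum is over at most $n + O(1)$ values).

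For the concentration, I would expose the randomness one step at a time: let $M_j := \Ev[\widetilde N_k(n) \mid \mathcal F_j]$ be the Doob martingale obtained by revealing the choices $C_1, \dots, C_j$ of active cliques. Changing a single choice $C_j$ alters the subsequent evolution, but the effect on $\widetilde N_k(n)$ is bounded: one insertion changes the degrees of only $d+1$ vertices by $1$ each, and a standard coupling shows the discrepancy propagates to at most $O(1)$ vertices per later step, so $|M_j - M_{j-1}| \le C$ for an absolute constant $C = C(d)$. Azuma–Hoeffding then gives $\Pv(|\widetilde N_k(n) - \Ev[\widetilde N_k(n)]| \ge t) \le 2\exp(-t^2/(2C^2 n))$; choosing $t = c'\sqrt{n\log n}$ with $c'$ large and union-bounding over the $O(n)$ relevant values of $k$ makes the total failure probability $o(1)$. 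Combining the concentration bound with the $\Ev[\widetilde N_k(n)] = p_k(n+d+2) + O(\mathrm{poly}(k))$ estimate, dividing by $n + d + 2$, and taking the maximum over $k$ yields $\max_k |\widetilde p_k(n) - p_k| = O(\sqrt{\log n / n})$ with probability $1 - o(1)$, as claimed. I expect the main obstacle to be the bounded-differences estimate for the Doob martingale: one must argue carefully that perturbing an early clique-choice does not cause the degree counts to diverge wildly — the cleanest route is probably an explicit coupling that keeps the two $\RAN$ processes identical except on a subtree of bounded "width" at each level, together with the observation that $|\widetilde N_k|$ is $1$-Lipschitz with respect to changing any single vertex's degree.
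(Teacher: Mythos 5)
Your proposal follows essentially the same route as the paper: a recursion for $\Ev[\widetilde N_k(n)]$ based on the count $A_k=(d-1)k-d^2+d+2$ of active cliques incident to a degree-$k$ vertex, a fixed-point/telescoping identification of $p_k$ (your relation $p_k(1+a_k/d)=p_{k-1}a_{k-1}/d+\ind_{\{k=d+1\}}$ is exactly the paper's recursion $p_k(d+A_k)=p_{k-1}A_{k-1}+d\ind_{\{k=d+1\}}$), and an Azuma--Hoeffding bound for a Doob martingale, combined by the triangle inequality. This is precisely the decomposition into the paper's Lemmas 5.1 and 5.2, which the paper itself only sketches, deferring to Frieze--Tsourakakis and to the detailed $\EAN$ computation.

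Two points need tightening. First, in the bounded-differences step the phrase ``the discrepancy propagates to at most $O(1)$ vertices per later step'' cannot be the actual mechanism: a discrepancy growing by $O(1)$ per step accumulates to $O(n-j)$ and Azuma gives nothing. The correct statement (and the one the paper proves in the $\EAN$ setting) is that after changing $C_j$ to $C_j'$ one can couple the two processes via a degree-preserving bijection between the vertices subsequently inserted into the interiors of $C_j$ and of $C_j'$, so that \emph{at every later time} at most $2(d+1)$ vertices (the boundary vertices of $C_j$ and $C_j'$) have unmatched degrees; your closing sentence about a coupling with bounded ``width'' is the right idea, but it should be stated as a uniform-in-time bound on the total symmetric difference, not a per-step increment. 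Second, an error bound $|\Ev[\widetilde N_k(n)]-p_k(n+d+2)|\le C_k$ with $C_k$ growing polynomially in $k$ is not enough as written, since the maximum runs over $k$ up to order $n$; you either need the standard induction on $n$ giving a constant uniform in $k$ (as in the Bollob\'as--Riordan--Spencer--Tusn\'ady scheme the paper invokes), or you must dispose of large $k$ separately using the trivial bound $|\Ev[\widetilde N_k(n)]-p_k(n+d+2)|\le \Ev[\widetilde N_k(n)]+p_k(n+d+2)$, which is small for $k$ large by the power-law decay of $p_k$ and a first-moment bound on high-degree vertices. Both repairs are routine, so I regard the proposal as correct in substance.
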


Our next theorem describes the degree distribution of the graph $\mathrm{EAN}_d(n,\{q_n\})$. Let us denote  the set of vertices after $n$ steps by $  V(n)$, and write $  N_k(n)$ and  $  p_k(n)$ for the number and the empirical proportion of vertices with degree $k$ at time $n$, respectively, i.e.\
\be\label{def:pk}
    p_k(n):= \frac{  N_k(n)}{  |V(n)|}=\frac1{|V(n)|} \sum_{i \in   V(n)}{\ind \{ D_i(n)=k\} }.
\ee
\begin{theorem}[Degree distribution for $\EAN$s]\label{k14}
Let $d\geq 2$ and $\{ q_n\}_{n=0}^\infty$ be probabilities such that
\[
  q_n\to 0, \quad \sum_{n=0}^\infty q_n=\infty.
\]
Then the degree distribution is tending to the same asymptotic degree distribution $\{   p_k\}_{k=d+1}^{\infty}$ as in the case of $\RAND$ \eqref{eq:pkdef}, and
\[
  \Pv \left( \max_k \left|   p_k(n)-  p_k\right| \geq c\sqrt{\frac{\log  |V(n)|}{|V(n)|}}\right)=o(1)
\]
for any $c>\sqrt{8}(d+1)^{3/2}$.
\end{theorem}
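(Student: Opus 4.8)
The plan is to follow the proof of Theorem~\ref{739} for $\RAN$s, where one writes a stochastic recursion for the number $N_k(n)$ of degree-$k$ vertices, solves the associated fixed-point relation to identify $\{p_k\}$, and then concentrates $N_k(n)$ around its mean by a martingale argument. The only genuinely new feature of the $\EAN$ dynamics is that in a single step a vertex may have several of its incident active cliques occupied simultaneously; I will show that this contributes a negligible error precisely because $q_n\to 0$, while $\sum_n q_n=\infty$ guarantees both that the recursion has enough "effective time'' to converge and that $|V(n)|\to\infty$ (so the stated bound is meaningful). Concretely, let $c_k:=(d-1)k-d^2+d+2$ be the number of active cliques containing a fixed interior vertex of degree $k$; this increases by $d-1$ each time the vertex gains an edge, starting from $c_{d+1}=d+1$, and the $O(1)$ original simplex vertices are a harmless correction. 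In step $n+1$ the $c_k$ cliques around a degree-$k$ vertex are occupied independently with probability $q_{n+1}$, so such a vertex leaves class $k$ with probability exactly $1-(1-q_{n+1})^{c_k}$, a degree-$(k-1)$ vertex enters class $k$ with probability $c_{k-1}q_{n+1}(1-q_{n+1})^{c_{k-1}-1}$ plus $O(q_{n+1}^2)$ contributions from lower degrees, and each occupied clique adds one fresh vertex of degree $d+1$; the number of occupied cliques has conditional mean $A(n)q_{n+1}$ with $A(n)=d|V(n)|-(d^2+d-1)$ a \emph{deterministic} function of $|V(n)|$. Expanding the binomials gives
\[
\Condexpect{N_k(n+1)}{\mathcal{F}_n}=N_k(n)+q_{n+1}\bigl(c_{k-1}N_{k-1}(n)-c_kN_k(n)+d\,\ind\{k=d+1\}\,|V(n)|\bigr)+\mathrm{Err}_{n+1},
\]
with $|\mathrm{Err}_{n+1}|\le C\,(dk)^2 q_{n+1}^2\,|V(n)|$ collecting all $O(q^2)$ terms; this is exactly the $\RAN$ recursion with $q_{n+1}$ in place of the weight $1/(dn+d+1)$, up to $\mathrm{Err}$.

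Next, the associated relation $p_k(d+c_k)=c_{k-1}p_{k-1}+d\,\ind\{k=d+1\}$ is identical to the one in the $\RAN$ proof; it gives $p_{d+1}=d/(2d+1)$ and $p_k=\tfrac{c_{k-1}}{d+c_k}\,p_{k-1}$, which telescopes to~\eqref{eq:pkdef}, the power-law tail following from $\Gamma(t+a)/\Gamma(t)=t^a(1+o(1))$. Put $r_k(n):=\Ev[N_k(n)]-p_k\,\Ev[|V(n)|]$; using the fixed-point identity the linear terms cancel, leaving $r_k(n+1)=(1-c_kq_{n+1})r_k(n)+c_{k-1}q_{n+1}r_{k-1}(n)+O(q_{n+1})+\Ev[\mathrm{Err}_{n+1}]$. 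Iterating, $r_k(n)$ is a weighted sum of the source terms with weights $\prod_i(1-c_kq_i)\to 0$ (since $\sum q_i=\infty$); dividing by $\Ev[|V(n)|]\asymp\prod_i(1+dq_i)$ and estimating $\sum_j q_j^2\exp\!\bigl(-\gamma\sum_{i>j}q_i\bigr)$ by an Abel/Riemann-sum argument bounds $r_k(n)/\Ev[|V(n)|]$ by $C\sup_{i>M}q_i+o(1)$ for every fixed $M$, which tends to $0$ as $M\to\infty$ by $q_n\to 0$. An induction on $k$ (the recursion is triangular) then yields $\Ev[N_k(n)]/\Ev[|V(n)|]\to p_k$ for each $k$.

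For the concentration I would condition on the skeleton $\mathcal{B}:=\sigma(B_1,\dots,B_n)$, where $B_j$ is the number of occupied cliques in step $j$; this fixes $|V(n)|=d+2+\sum_j B_j$ and turns the process into: in step $j$ a uniformly random $B_j$-subset of the active cliques is occupied. Exposing the $|V(n)|$ added vertices one at a time, together with the clique each enters, makes $\Condexpect{N_k(n)}{\mathcal{G}_m}$ a Doob martingale whose increments are bounded by $2(d+1)^{3/2}$, and Azuma's inequality gives
\[
\Prob{\bigl|N_k(n)-\Condexpect{N_k(n)}{\mathcal{B}}\bigr|\ge c\sqrt{|V(n)|\log|V(n)|}}\le 2\,|V(n)|^{-c^2/(8(d+1)^3)},
\]
so a union bound over the at most $|V(n)|+d+2$ possible degrees $k$, together with the mean estimate above, a matching concentration of $|V(n)|$ around $\Ev[|V(n)|]$, and the fact that $|V(n)|\to\infty$, yields $\max_k|p_k(n)-p_k|\le c\sqrt{\log|V(n)|/|V(n)|}$ with probability $1-o(1)$ once $c>\sqrt8\,(d+1)^{3/2}$.

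The main obstacle is the bounded-difference estimate in the last step: a single step of the $\EAN$ dynamics adds a random, unbounded number of vertices and all of their descendants, so the naive step-indexed martingale has no useful bound, and one must instead argue that re-routing a single added vertex into a different clique perturbs the conditional expectation of $N_k(n)$ by only $O(d^{3/2})$ — this needs a coupling of the two continuations of the process together with a stability (contraction) estimate showing the perturbation does not blow up. The second delicate point is propagating the discretization error $\mathrm{Err}$ through the triangular recursion in $k$ with enough uniformity to survive the union bound; this is handled by the Abel-summation estimate above together with the fact that $c_k$ and $p_k$ are polynomially controlled in $k$.
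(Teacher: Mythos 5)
Your overall architecture is the same as the paper's: a recursion for the expected number of degree-$k$ vertices whose fixed point is the relation $p_k(d+A_k)=A_{k-1}p_{k-1}+d\,\ind\{k=d+1\}$ (identical to \eqref{eq:pk-recursion}, telescoping to \eqref{eq:pkdef}), plus an Azuma/Doob-martingale concentration step conditional on the skeleton of vertex counts. However, there is a genuine gap in your mean analysis. You run the recursion for the \emph{unconditional} quantities $r_k(n)=\Ev[N_k(n)]-p_k\,\Ev[|V(n)|]$ and then, to convert this into a statement about $p_k(n)=N_k(n)/|V(n)|$, you invoke ``a matching concentration of $|V(n)|$ around $\Ev[|V(n)|]$''. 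This is false: $N(n)\prod_{i=1}^{n-1}(1+dq_i)^{-1}$ is an $L^2$-bounded nonnegative martingale converging a.s.\ to a nondegenerate random variable $\xi$ (the paper records exactly this in the remark containing \eqref{103}), so $|V(n)|/\Ev[|V(n)|]$ converges to a random limit, not to $1$. Since your Azuma step controls $N_k(n)$ around its conditional mean given the skeleton $\mathcal{B}$, what you actually need is $\Condexpect{N_k(n)}{\mathcal{B}}/|V(n)|\to p_k$, and the unconditional computation does not deliver this. The paper's Lemma \ref{lem:EAN2} avoids the problem by running the entire recursion \emph{conditionally} on $\CG_n=\sigma(N(1),\dots,N(n))$, replacing $q_{n+1}$ by the empirical occupation parameter $\hat q_n=(N(n+1)-N(n))/A(n)$ and bounding $\varepsilon_k(n)=\Ev[N_k(n)|\CG_n]-p_kN(n)$ by induction on $k$, with error $C_k\bigl(\sum_i (N(i+1)-N(i))/(dN(i))\bigr)^{k-d}$. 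Your triangular induction and Abel-summation estimate can be salvaged, but only after being rewritten in this conditional form.

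A second, smaller issue is that the bounded-difference estimate you defer as ``the main obstacle'' is precisely where the conditioning on the skeleton pays off, and the paper does carry it out: given $\CG_n$ the number of new vertices added at each step is fixed, so the two outcomes of a single coin flip can be coupled by re-routing one vertex into a different clique at a later flip of the same step; the two continuations then have the same vertex set and differ in the degrees of at most $2(d+1)$ vertices, giving the increment bound \eqref{k11} needed for Azuma--Hoeffding. You have correctly identified that the naive step-indexed martingale fails and that a coupling is needed, but you leave the coupling unproved, so as written the concentration half of your argument is also incomplete.
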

The proof of these theorems are given in Sections \ref{sec::proofDegreeRAN} and \ref{sec::proofDegreeEAN}.
Next we describe the clustering coefficient  of $\RAN$s and $\EAN$s. The clustering coefficient of a node is just the portion of the number of existing edges between its neighbors, compared to the number of all possible edges between those. Here we investigate the clustering coefficient of the whole graph, which is the average of clustering coefficients over the nodes. Since these are direct consequences of the formula for the limiting degree distributions $p_k$, we state them as corollaries. This corollary is similar to the result in \cite[Section 4.2.]{Zhang:2006:ddimRAN}, but now, that we have established the degree distribution, it has a rigorous proof.
This is based on the observation that the clustering coefficient of a vertex with degree $k$ is \emph{deterministic} and equals
\[
  \frac{d(2k-d-1)}{k(k-1)}\sim \frac{2d}k.
\]
The reason for this formula is the following: when the degree of a node $v$ increases by one by adding a new vertex $w$ in one of the active cliques $v$ is contained in, then the number of edges between the neighbors of $v$ increases exactly by $d$, since the newly added vertex $w$ is connected to the $d$ other vertices in the clique.
It was observed in simulations and heuristicly proved in \cite{Zhang:2006:ddimRAN}, that the average clustering coefficient of these networks are converging to a strictly positive constant. Our next corollary determines the exact value of these constants for the two models:
\begin{corollary}[Clustering coefficient]\label{cor::ClustCoeff}
The average clustering coefficient of $\mathrm{RAN}_d(n)$ converges to a strictly positive constant as $n\to\infty$, given by
\be\label{eq:clust} \begin{aligned}
  Cl_d&=\sum_{k=d+1}^\infty{\frac{d(2k-d-1)}{k(k-1)}p_k}\\
  &=\sum_{k=d+1}^\infty{\frac{d(2k-d-1)}{k(k-1)}\cdot\frac{d}{2d+1}\: \frac{\Gamma(k-d+\frac{2}{d-1})}{\Gamma(1+\frac{2}{d-1})}\: \frac{\Gamma(1+\frac{2d+1}{d-1})}{\Gamma(k-d+\frac{2d+1}{d-1})}}.
\end{aligned} \ee
Further, the clustering coefficient of $\mathrm{EAN}_d(n,\{q_n\})$ converges to the same value as in \eqref{eq:clust} if $q_n\to 0$ and $\sum_{n\in\N}q_n=\infty$.

\end{corollary}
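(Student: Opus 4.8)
The plan is to reduce everything to the limiting degree distributions already established in Theorems \ref{739} and \ref{k14}, together with the deterministic formula for the clustering coefficient of a single vertex of degree $k$. First I would verify that observation carefully: in any $\RAN_d$ or $\EAN_d$, a vertex $v$ of degree $k$ is incident to exactly $2k-d-1$ edges among its neighbors. This follows by induction on the step in which edges at $v$ are created: when $v$ is first inserted with degree $d$, its $d$ neighbors form a clique so there are $\binom{d}{2}$ edges among them, and $2d-d-1=d-1$ — wait, one must instead track it as $d(2k-d-1)$ being the \emph{count of edges among neighbors}, i.e. when $\deg v=d$ the $d$ neighbors span $\binom d2$ edges and indeed $\binom d2 = \tfrac{d(d-1)}2$; each subsequent time $v$'s degree rises from $k-1$ to $k$ a new vertex $w$ is placed in an active clique containing $v$, and $w$ connects to the $d-1$ other vertices of that clique besides $v$, adding exactly $d-1$ new edges among neighbors of $v$; hence after $\deg v = k$ the number of such edges is $\tfrac{d(d-1)}2 + (d-1)(k-d)$, and dividing by $\binom k2$ gives precisely $\frac{d(2k-d-1)}{k(k-1)}$ after simplification. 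Crucially this value depends only on $k$, not on the random history, so the average clustering coefficient of the whole graph equals $\sum_k \widetilde p_k(n)\,\frac{d(2k-d-1)}{k(k-1)}$ exactly (not merely in expectation).

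Next I would pass to the limit. Write $c_k:=\frac{d(2k-d-1)}{k(k-1)}$, so $0\le c_k\le 1$ for all $k\ge d+1$ and $c_k\sim 2d/k\to 0$. From Theorem \ref{739}, $\max_k|\widetilde p_k(n)-p_k|\le c\sqrt{\log n/n}$ with probability $1-o(1)$; on that event
\[
\Bigl|\sum_{k=d+1}^\infty c_k\bigl(\widetilde p_k(n)-p_k\bigr)\Bigr|
\;\le\; \sum_{k=d+1}^{K}|\widetilde p_k(n)-p_k| \;+\; \sum_{k>K} c_k\bigl(\widetilde p_k(n)+p_k\bigr),
\]
and one controls the two pieces by choosing a cutoff $K=K(n)\to\infty$ slowly: the first sum is at most $K\,c\sqrt{\log n/n}$, which is $o(1)$ for, say, $K=\fl{\log n}$; the second sum is bounded using $\sum_{k>K}\widetilde p_k(n)\le 1$ and $c_k\le 2d/K$ for the empirical part (giving $O(1/K)$), and by the tail $\sum_{k>K}c_k p_k=O(1/K)$ for the limiting part, since $c_k p_k = O(k^{-1-(2d-1)/(d-1)})$ is summable with tail $O(K^{-1/(d-1)})$. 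Hence the average clustering coefficient converges in probability to $Cl_d=\sum_k c_k p_k$. Substituting the closed form \eqref{eq:pkdef} for $p_k$ — and using the identity $\Gamma(1+\tfrac{2}{d-1})\Gamma(2+\tfrac{d+2}{d-1})^{-1}$-type rearrangement to write the ratio of Gamma factors as $\Gamma(1+\tfrac{2d+1}{d-1})/\Gamma(k-d+\tfrac{2d+1}{d-1})$ — gives exactly \eqref{eq:clust}. Positivity of $Cl_d$ is immediate since every term is positive and, e.g., the $k=d+1$ term alone is $\frac{d(d+1)}{(d+1)d}\,p_{d+1}=p_{d+1}>0$.

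For the $\EAN$ statement I would repeat the argument verbatim, now invoking Theorem \ref{k14}: under $q_n\to 0$, $\sum q_n=\infty$ the empirical degree distribution $p_k(n)$ concentrates around the \emph{same} $\{p_k\}$ with the bound $\max_k|p_k(n)-p_k|\le c\sqrt{\log|V(n)|/|V(n)|}$ w.h.p., and since $|V(n)|\to\infty$ a.s.\ (as $\sum q_n=\infty$ forces infinitely many insertions), the identical cutoff estimate yields convergence in probability of the average clustering coefficient to $Cl_d$. The deterministic per-vertex formula $c_k$ is unchanged because in an $\EAN$ a new vertex is still placed inside an active clique and joined to its $d$ vertices, so each degree increment of a neighbor again adds exactly $d-1$ edges among that neighbor's neighbors. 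I expect no serious obstacle here; the only mildly delicate point is the uniform tail control of $\sum_{k>K}c_k\widetilde p_k(n)$, which is handled cleanly by the crude bound $c_k\le 2d/K$ rather than by any refined estimate on the empirical tail.
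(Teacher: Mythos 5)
Your limit-passage argument (the cutoff at $K=\lfloor\log n\rfloor$, the uniform bound from Theorem \ref{739}, the crude tail control via $c_k\le 2d/(k-1)$, and the identical appeal to Theorem \ref{k14} for the $\EAN$ case) is sound, and is in fact more detailed than the paper, which treats the corollary as an immediate consequence of the degree-distribution theorems. However, your derivation of the deterministic per-vertex formula contains a concrete combinatorial error. In $\RAN_d$ a new vertex is joined to all $d+1$ vertices of the chosen active clique (a $d$-simplex has $d+1$ vertices), so a vertex $v$ enters the graph with degree $d+1$, not $d$, and its $d+1$ initial neighbours span $\binom{d+1}{2}$ edges. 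Moreover, each time $\deg v$ increases from $k-1$ to $k$, the new vertex $w$ sits inside a clique containing $v$ together with $d$ \emph{other} neighbours of $v$, so it creates exactly $d$ new edges among the neighbours of $v$, not $d-1$. The correct count at degree $k$ is therefore $\binom{d+1}{2}+d\,(k-d-1)=\tfrac{d(2k-d-1)}{2}$, which divided by $\binom{k}{2}$ gives $\tfrac{d(2k-d-1)}{k(k-1)}$. Your bookkeeping $\tfrac{d(d-1)}{2}+(d-1)(k-d)=\tfrac{(d-1)(2k-d)}{2}$ does \emph{not} simplify to $\tfrac{d(2k-d-1)}{2}$: for $d=2$, $k=3$ it gives $2$, whereas the three neighbours of a degree-$3$ vertex in $\RAN_2$ form a triangle with $3$ edges. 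So the claimed ``simplification'' at the end of that step is false as written. The same correction applies verbatim to the $\EAN$ case, where a new vertex is likewise joined to all $d+1$ vertices of its clique. Once the per-vertex formula is derived with the correct initial degree and increment, the remainder of your proof goes through unchanged.
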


\subsection{Related literature}\label{subsec::RelatedLitr}

Several results connected to the asymptotic degree distribution of Apollonian networks are known. It is not hard to see that if a vertex belongs to $k$ active cliques, then the chance of  getting a new edge is proportional to $k$: this argument shows that these models belong to the wide class of \emph{Preferential attachment models} \cite{BA,BR,BRST}. As a result, some of the classical methods can be adapted to this model.

Using heuristic arguments, Zhang and co-authors \cite{Zhang:2006:ddimRAN} obtained that the asymptotic degree exponent should be $\frac{2d-1}{d-1}\in (2,3]$, which is in good agreement with their simulations.
Parallel to writing this paper, we noticed that Frieze and Tsourakakis \cite{Tsourakis:2012:wrongRANdiam} very recently derived rigorously the exact asymptotic degree distribution of the two-dimensional $\RAN$.
Even though our work is parallel to theirs, the methods are similar: this is coming from the fact that both of the methods used there and in our work are an adaptation of standard methods given in \cite{BRST, Remco}. So, to avoid repetition we decided to only sketch some parts of the proof and include the part that does not follow from their work.

What is entirely new in our paper is that we study the $\EAN$ model rigorously. For the degree distribution of $\EAN$s only heuristic arguments were known before. Zhang, Rong and Zhou \cite{Zhang2006EAN} studied the graph series $\mathrm{EAN}_d(n,\{q_n\equiv q\})$. They derived the asymptotic degree exponent using heuristic arguments, and the result fits well with the simulations. They also suggested that as $q\to 0$ the model $\mathrm{EAN}_d(n,\{q_n\equiv q\})$ converges to $\mathrm{RAN}_d(n)$ in some sense. We confirm their claim by deriving the asymptotic degree distribution of $\mathrm{EAN}_d(n,\{q_n\})$ with $\{q_n\}$ such that $q_n\to 0$ and $\sum_{n=0}^\infty q_n =\infty$, obtaining the same degree distribution. So the idea of Zhang and co-authors can be made precise in this way.

The statements of Theorem \ref{thm::MainRes} are in agreement with previous results. In particular, in \cite{Zhang:2006:ddimRAN} the authors estimate the average path length, i.e. the hopcount averaged over all pairs of vertices, and they show that it scales logarithmically with the size of the network.

A refined, but still weaker claim is obtained by Albenque and Marckert \cite{Albenque:2008:hopinRAN} concerning the hopcount in two dimensions. They prove that
  \begin{equation*}
  \frac{\Hop(n)}{6/11 \log n} \toinp 1.
  \end{equation*}
The constant $6/11$ is the same as $2(d+1)/(d\mu_d)$ for $d=2$. They use the previously mentioned notion of stack triangulations to derive the result from a CLT similar to the one in Theorem \ref{thm::MainRes}, we show an alternative approach using weaker results. The CLT for distances in $\EAN$s is new.

Central limit theorems of the form \eqref{eqn::mainHop} for the hopcount have been proven with the addition of exponential or general edge weights for various other random graph models, known usually under the name \emph{first passage percolation}. Janson \cite{Janson_CompleteGraph} analysed distances in the complete graphs with i.i.d. exponential edge weights. In a series of papers Bhamidi, van der Hofstad and Hooghiemstra determine typical distances and prove CLT for the hopcount for the Erd\H{o}s-R\'enyi random graph \cite{RvdH_FPP}, the stochastic mean-field model \cite{RvdH_FPPonComplete}, the configuration model with finite variance degrees \cite{RvdH_FPPonCM} and quite recently for the configuration model \cite{RvdH_FPP_Generalweights} with arbitrary independent and identically distributed edge weights.  Inhomogeneous random graphs are handled by Bollob\'as, Janson and Riordan \cite{BB_IHRGM, KomKol2013FPPIHRG}. Note that in all these models the edges have random weights, while in $\RAN$s and $\EAN$s all edge weights are $1$. The reason for this similarity is hidden in the fact that all these models have an underlying branching process approximation, and the CLT valid for the branching process implies CLT for the hopcount on the graph. The diameter and flooding time of $\EAN$s remains a future project.

Further, there are some previous bounds known about the diameter of $\RAN$s: Frieze and Tsourakakis \cite{Tsourakis:2012:wrongRANdiam} establishes the upper bound $2\tilde c_2 \log n$ for $\RAN_2(n)$. They use a result of Broutin and Devroye \cite{Broutin:2006:LargeDev} that, combined with the branching process approximation of the structure of $\RAN$s we describe in this paper, actually implicitly gives the $2\tilde c_d\log n$ upper bound for all $d$.

Just recently and independently from our work other methods were used to determine the diameter. In \cite{Abbas2013diamRAN} Ebrahimzadeh et al. apply the result of \cite{Broutin:2006:LargeDev} in an elaborate way, while Cooper and Frieze in \cite{Cooper2013diamRAN} use a more analytical approach solving recurrence relations. We emphasize that the methods in \cite{Cooper2013diamRAN, Abbas2013diamRAN} and in the present paper are all qualitatively different. Numerical solution of the maximization problem \eqref{eqn::defalpha*beta} for $d=2$ yields the optimal $(\tilde\alpha,\tilde\beta)$ pair to be approximately $(0.8639,1.500)$. The corresponding constant for the diameter is $1.668$, which perfectly coincides with the one obtained in \cite{Cooper2013diamRAN} and \cite{Abbas2013diamRAN}. To the best of our knowledge no result has been proven for the flooding time.

\section{Structure of RANs and EANs}\label{sec::strucureofRAN}

\subsection{Tree-structure of RANs and EANs} The construction method of $\RAN$s and $\EAN$s enables us to describe a natural way to code the vertices and active cliques of the graph parallel to each other. Let $\Sigma_d:=\{1,2,\ldots,d+1\}$ be the symbols of the alphabet. We give the initial vertices of a $\mathrm{RAN}_d(0)$ the empty word except for the vertex in the middle of the initial $d$-simplex which gets the symbol $\mathbf{O}$ (root). Then, we label each initial active clique by a different symbol from $\Sigma_d$. In step $n=1$ we assign the newly added vertex $u$ the code $\mathbf{u}=i$ if the clique with label $i$ was chosen. Further, we label the $d+1$ newly formed $d$-simplices by $ij$ for all $j\in\Sigma_d$. Similarly for $n\geq2$, we assign the newly added vertex $v$ the \emph{label of the clique that becomes inactive} and the newly formed active cliques are given the labels $\mathbf{v}j$, $j=1,\ldots, d+1$. It is crucial to keep the labeling \emph{consistent} in a geometrical sense, we are going to describe how to do this in Lemma \ref{lemma::neighbors} below.

Call the length of a code $\abs{\mathbf{u}}$ the generation of the vertex $u$ and denote the deepest common ancestor of $u$ and $v$ by $\mathbf{u}\wedge \mathbf{v}$, thus $\abs{\mathbf{u}\wedge \mathbf{v}} = \min\{k: u_{k+1}\neq v_{k+1}\}$. For codes $\mathbf{u}=u_1\ldots u_n$ and $\mathbf{v}=v_1\ldots v_m$ denote the concatenation $u_1\ldots u_nv_1\ldots v_m$ by $\mathbf{u}\mathbf{v}$. Further, let $u_{(i)}$ denote the position of the last occurrence of the symbol $i\in\Sigma_d$ in $\mathbf{u}$. We introduce the cut-operators $T_i \mathbf{u}:= u_1\dots u_{(i)-1}$ and $P_i \mathbf{u}=u_{(i)}\dots u_n$ for all $i\in \Sigma_d$. The following lemma collects the most important combinatorial observations.

\begin{lemma}[Tree-like properties of the coding]\label{lemma::neighbors}
There exists a way of choosing the coding of the vertices in the following way:
\begin{enumeratea}
\item {\bf consistency:} The $d+1$ neighbors of a newly formed vertex $u$ with code $\mathbf u$ are $T_i \mathbf{u}$, $i\in\Sigma_d$.
\item The shortest path between any two vertices $\mathbf u$ and $\mathbf v$ goes through $\mathbf{u}\wedge \mathbf{v}$ or one of its ancestors.
\item For every $\mathbf{u}, \mathbf{v}, \mathbf{w} \,\, \Hop(uv,u)\leq\Hop(uvw,u)$. \item Assume $\abs{\mathbf{u}\wedge \mathbf{v}}<\min\{\abs{\mathbf{u}}, \abs{\mathbf{v}}\}$. Then $\Hop(u,v)\leq\Hop(uw,v)$.
\end{enumeratea}
\end{lemma}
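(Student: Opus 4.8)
\medskip
\noindent\textbf{Proof plan.}
The plan is to prove (a) by induction along the construction and then to deduce (b), (c) and (d) from (a) together with one ``separation'' property of the coding; (a) is the only part that needs genuine work, and that work is combinatorial bookkeeping.

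For (a) I would first fix a geometric labelling of the initial configuration: writing $o_1,\dots,o_{d+1}$ for the vertices of the outer $d$-simplex, I adopt the conventions that the empty word stands for the root $\mathbf{O}$ and that $T_i\mathbf{w}$ stands for $o_i$ whenever the symbol $i$ does not occur in $\mathbf{w}$, and I declare the initial active clique labelled $i$ to be $\{\mathbf{O}\}\cup\{o_\ell:\ell\neq i\}$. Then I would prove by induction on the number of steps the strengthened assertion that \emph{every active clique present carries a label $\mathbf{w}$ whose $d+1$ vertices are precisely $\{T_i\mathbf{w}:i\in\Sigma_d\}$}. The base case holds by construction. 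For the inductive step, when a new vertex with code $\mathbf{w}$ is inserted into the active clique labelled $\mathbf{w}$, the induction hypothesis says that its $d+1$ neighbours are $\{T_i\mathbf{w}:i\in\Sigma_d\}$ — which is exactly assertion (a) — and one may label the $d+1$ newborn cliques $\mathbf{w}1,\dots,\mathbf{w}(d+1)$ so that $\mathbf{w}j$ is the one omitting the old vertex $T_j\mathbf{w}$; since $T_j(\mathbf{w}j)=\mathbf{w}$ and $T_i(\mathbf{w}j)=T_i\mathbf{w}$ for $i\neq j$, the vertex set of $\mathbf{w}j$ is $\{\mathbf{w}\}\cup\{T_i\mathbf{w}:i\neq j\}=\{T_i(\mathbf{w}j):i\in\Sigma_d\}$, closing the induction. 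The real work is to check that this relabelling of newborn cliques can always be carried out consistently — that $T_j\mathbf{w}$ is unambiguously one of the $d+1$ distinct vertices of clique $\mathbf{w}$, and that the boundary conventions above are compatible with the geometry of the stack-subdivision.

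The second ingredient, which I would isolate first, is the following separation property. For a clique label $\mathbf{C}$ let $\mathrm{Int}(\mathbf{C})$ denote the set of vertices having $\mathbf{C}$ as a prefix, and put $\partial\mathbf{C}:=\{T_i\mathbf{C}:i\in\Sigma_d\}$; by (a) the latter is the vertex set of clique $\mathbf{C}$, and each of its elements is a prefix of $\mathbf{C}$. \emph{No edge joins a vertex of $\mathrm{Int}(\mathbf{C})$ to a vertex outside $\mathrm{Int}(\mathbf{C})\cup\partial\mathbf{C}$.} Indeed, apart from the edges among the $d+2$ initial vertices (none of which lies in any $\mathrm{Int}(\mathbf{C})$), every edge has the form $\{\mathbf{z},T_i\mathbf{z}\}$ with $\mathbf{z}$ the endpoint added later in the construction, so $T_i\mathbf{z}$ is a prefix of $\mathbf{z}$; if $\mathbf{C}$ is a prefix of $\mathbf{z}$, then either $\abs{T_i\mathbf{z}}\geq\abs{\mathbf{C}}$, whence $\mathbf{C}$ is a prefix of $T_i\mathbf{z}$ (the prefixes of a word are totally ordered) so $T_i\mathbf{z}\in\mathrm{Int}(\mathbf{C})$, or $\abs{T_i\mathbf{z}}<\abs{\mathbf{C}}$, whence the last occurrence of $i$ in $\mathbf{z}$ lies in the initial segment $\mathbf{C}$, so $T_i\mathbf{z}=T_i\mathbf{C}\in\partial\mathbf{C}$; and if $\mathbf{C}$ is not a prefix of $\mathbf{z}$ then it is not a prefix of $T_i\mathbf{z}$ either. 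Consequently every path from a vertex of $\mathrm{Int}(\mathbf{C})$ to a vertex outside $\mathrm{Int}(\mathbf{C})\cup\partial\mathbf{C}$ must pass through $\partial\mathbf{C}$.

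Finally, (b), (c) and (d) are three applications of this property. For (b) we may assume $\mathbf{u}$ and $\mathbf{v}$ incomparable (otherwise $\mathbf{u}\wedge\mathbf{v}$ is one of them and the claim is trivial); putting $\mathbf{a}=\mathbf{u}\wedge\mathbf{v}$, $k=\abs{\mathbf{a}}$ and applying the property with $\mathbf{C}=\mathbf{a}u_{k+1}$, we have $\mathbf{u}\in\mathrm{Int}(\mathbf{C})$ but $\mathbf{v}\notin\mathrm{Int}(\mathbf{C})$ since $v_{k+1}\neq u_{k+1}$, so every path from $u$ to $v$ — in particular every shortest one — meets $\partial\mathbf{C}=\{\mathbf{a}\}\cup\{T_i\mathbf{a}:i\neq u_{k+1}\}$, and each vertex of this set is $\mathbf{a}=\mathbf{u}\wedge\mathbf{v}$ or an ancestor of it. For (c), whose only nontrivial case has $\mathbf{v}$ and $\mathbf{w}$ nonempty, apply the property with $\mathbf{C}=\mathbf{u}\mathbf{v}$: since $uvw\in\mathrm{Int}(\mathbf{u}\mathbf{v})$ and $u\notin\mathrm{Int}(\mathbf{u}\mathbf{v})$, a geodesic from $uvw$ to $u$ leaves $\mathrm{Int}(\mathbf{u}\mathbf{v})$ at a vertex $b$ with $\mathbf{b}\in\partial(\mathbf{u}\mathbf{v})$, i.e.\ at a neighbour of $uv$, whence $\Hop(uvw,u)=\Hop(uvw,b)+\Hop(b,u)\geq 1+\Hop(b,u)=\Hop(uv,b)+\Hop(b,u)\geq\Hop(uv,u)$. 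For (d), with $\mathbf{C}=\mathbf{u}$, the hypothesis $\abs{\mathbf{u}\wedge\mathbf{v}}<\min\{\abs{\mathbf{u}},\abs{\mathbf{v}}\}$ forces $\mathbf{v}\notin\mathrm{Int}(\mathbf{u})\cup\partial\mathbf{u}$, so a geodesic from $uw$ to $v$ leaves $\mathrm{Int}(\mathbf{u})$ at a vertex $b$ with $\mathbf{b}\in\partial\mathbf{u}$ (a neighbour of $u$), and $\Hop(uw,v)=\Hop(uw,b)+\Hop(b,v)\geq 1+\Hop(b,v)=\Hop(u,b)+\Hop(b,v)\geq\Hop(u,v)$. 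The only genuine obstacle is the consistency bookkeeping inside (a); granting (a) and the separation property, (b)--(d) follow at once.
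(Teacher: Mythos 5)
Your proof is correct. Part (a) is essentially the paper's argument verbatim: the same induction on the construction, with the same computation $T_j(\mathbf{w}j)=\mathbf{w}$ and $T_i(\mathbf{w}j)=T_i\mathbf{w}$ for $i\neq j$; your only addition is an explicit (and sensible) convention for the boundary vertices $o_1,\dots,o_{d+1}$ of the outer simplex, which the paper glosses over. Where you diverge is in (b)--(d): the paper gives three short ad hoc arguments --- for (b) the verbal observation that all neighbours of $\mathbf u$ are either prefixes of $\mathbf u$ or descendants of $\mathbf u$, for (c) the monotonicity $|T_i(\mathbf{uvw})|\geq |T_i(\mathbf{uv})|$ of the cut operators under extension of the code, and for (d) a combination of (b) and (c) --- whereas you extract a single separation lemma (no edge leaves $\mathrm{Int}(\mathbf{C})$ except into $\partial\mathbf{C}=\{T_i\mathbf{C}\}$, so every path out of the subtree below a clique must cross that clique's vertex set) and derive all three parts uniformly from it by a boundary-crossing argument on a geodesic. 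The underlying fact is the same in both treatments (shortcut edges only connect a vertex to prefixes of its own code), but your packaging is more systematic and makes the chain of inequalities in (c) and (d) fully explicit, at the cost of a couple of edge cases you should state rather than wave at (e.g.\ in (c) the geodesic's exit vertex $b$ may equal $u$ itself when $\mathbf{u}\in\partial(\mathbf{uv})$; your inequality chain still goes through there, but say so).
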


Part (a) means that if we have a vertex with code $\mathbf u$, the first $d+1$ neighbours of the vertex can be determined by cutting off the last pieces of the code of $u$, up to the last occurence of a given character $i\in \Sigma_d$. This, in two dimensions means the following: suppose at step $n=0$ the `left', `right' and `bottom' triangles were given the symbols $1,2$ and $3$ respectively. Then later each new node $v$ with code $\mathbf v$ in the middle of  a triangle gives rise to the new `left', `right' and `bottom' triangles: to these we \emph{have to} assign the labels $\mathbf{v}1, \mathbf{v}2$ and $\mathbf{v}3$ respectively.
Part (b) says that if we have two vertices with code $\mathbf u$ and $\mathbf v$ in the tree, then the shortest path between them must intersect a path from the root to their deepest common ancestor  $\mathbf{u}\wedge \mathbf{v}$. Part (c) ensures that the shortest path between a vertex with code $\mathbf{u}$ and one of its descendants $\mathbf{uv}$ cannot go \emph{below} the vertex $\mathbf{uv}$ in the tree. In more detail, this means that even though there are non-tree (=shortcut) edges upwards in the tree along path towards the origins, these shortcut edges do not jump over each other: the shortcut edges from $\mathbf{uvw}$ (an arbitrary descendant of $\mathbf{uv}$) can only go above $\mathbf{uv}$ to vertices to which $\mathbf{uv}$ is also connected to. Finally, part (d) is a corollary of part (b) and (c): it means that if two vertices $u,v$ are not descendants of each other, then the shortest path between them does not go below them in the tree.

\begin{proof}

We prove part (a) by induction. We label the initial $d+1$ active cliques arbitrarily with $i\in \Sigma_d$. The hypotheses clearly holds in this case. Suppose now we already have an active clique with code $\mathbf u$, and by induction, we can assume that $\mathbf u$ is associated with the clique formed by vertices $(T_1 \mathbf u, T_2 \mathbf u \dots, T_{d+1} \mathbf u )$. When this clique becomes inactive, the vertex $\mathbf{u}$ is added to the graph and the new active cliques are
\begin{equation}\label{eq:activecliques}
(T_1 \mathbf u, \dots, T_{k-1} \mathbf u, T_{k+1} \mathbf u, \dots, T_{d+1} \mathbf u, \mathbf u) \mbox{ for all } k\in \Sigma_d.
\end{equation}
 Let us denote this clique by $\mathbf uk$.
We claim that by this choice, the property (a) is maintained, that is, if the vertex  $\mathbf uk$ will ever be added to the graph, then its neighbours going to be $T_i (\mathbf u k), i\in \Sigma_d$.
By construction, the neighbours of $\mathbf uk$ are exactly the ones in \eqref{eq:activecliques}, and clearly we have $T_k(\mathbf u k ) = \mathbf u$, and $T_i(\mathbf uk) = T_i(\mathbf u)$ for $i\neq k$, so we can write
\[ \begin{aligned}&(T_1 \mathbf u, \dots, T_{k-1} \mathbf u, T_{k+1} \mathbf u, \dots, T_{d+1} \mathbf u, \mathbf u) \\
&\qquad= (T_1 (\mathbf uk), \dots, T_{k-1} (\mathbf uk), T_{k+1} (\mathbf uk), \dots, T_{d+1} (\mathbf uk), T_k(\mathbf uk)). \end{aligned}\]
This proves (a).

(b) Note that by part (a), every vertex is connected to $d+1$ vertices which code length is shorter than $|\mathbf u|$, and all these vertices are descendants of each other, i.e. they are in the path from $\mathbf u$ to the root. The other vertices $\mathbf u$ is connected to are its descendants, i.e.  of the form $\mathbf {uw}$ for some $\mathbf w$. Hence if we want to build a path from vertex $u$ to $v$, we must go up in the tree to at least $\mathbf u\wedge \mathbf v$.

For (c) it is enough to observe that the position of the last occurrence of a symbol in a code can not be earlier than in some prefix of the same code, i.e.
$|T_i (\mathbf{uvw})| > |T_i (\mathbf {uv})|$ for all $i$.

(d) follows from (b) and (c), since $\mathbf{u}\wedge\mathbf{v}=\mathbf{uw}\wedge\mathbf{v}$ and $\Hop(\mathbf u, \mathbf u\wedge \mathbf v)\leq\Hop(\mathbf{uw}, \mathbf u\wedge \mathbf v)$ and also for all ancestors of $\mathbf{u}\wedge\mathbf{v}$.
\end{proof}

The coding in turn gives a natural grouping of the edges. Edges of the initial graph are not given any name. An edge is called a \emph{forward edge} if its endpoints have codes of the form $\mathbf{u}$ and $\mathbf{u}j$ for $j\in\Sigma_d$. All other edges are called \emph{shortcut edges}. So at each step one new forward edge and $d$ shortcut edges are formed. Figure 2 below shows an example in two dimensions. Initially the symbols $1,2,3$ were assigned to the left, right, bottom triangles and continued with the coding the same way.

\begin{figure}[!h]
\begin{tikzpicture}
[scale=0.6,
 elo/.style={circle, shade, ball color=white, scale=0.6},
 halott/.style={circle, shade, ball color=black!40, scale=0.6},
 el/.style ={thick, double = black, double distance = 0.5pt}]

\path[el,draw=black] (0,0) node[halott] (A) {} -- (8,0) node[halott] (C) {} -- (4,7) node[halott] (B) {} -- (A);
\node[elo] at (5,3.5) (O) {$\mathbf{O}$};
\foreach \x in {A,B,C}
  \path[el,draw=black] (O) -- (\x);
\node[elo] at (4.6,1.5) (3312) {$\mathbf{v}$};
\node[elo] at (3.7,3.9) (132) {$\mathbf{u}$};

\foreach \x/\y/\z in {3.5/5.5/1,4.9/2.6/3,2.2/2.8/13,3.8/2.4/31,6.6/0.7/33,2.8/1/331}
  \node[halott] at (\x,\y) (\z) {};
\path[el,draw=red] (O) -- (1) -- (13) -- (132);
\path[el,draw=red] (O) -- (3) -- (33) -- (331) -- (3312);
\path[el,draw=red] (3) -- (31);

\foreach \x in {1,13,31,3,331,33}
  \path[el,draw=blue] (A) -- (\x);
\foreach \x in {3,33}
  \path[el,draw=blue] (C) -- (\x);
\foreach \x in {13,132,31}
  \path[el,draw=blue] (O) -- (\x);
\path[el,draw=blue] (B) -- (1);
\foreach \x in {3,33}
  \path[el,draw=blue] (3312) -- (\x);
\path[el,draw=blue] (1) -- (132);
\path[el,draw=blue] (3) -- (331);
\begin{scope}[xshift=8.7cm, yshift=6cm]
\node[right, text width=6cm] at (0,0) {{Coding of vertices\\ grouping of edges}};
\path[el, draw=black] (0.5,-3.5) -- (1.1,-3.5) node[right, black, text width=3.8cm] {\phantom{x} initial graph};
\path[el, draw=red] (0.5,-4.5) -- (1.1,-4.5) node[right, black, text width=3.8cm] {\phantom{x} forward edges};
\path[el, draw=blue] (0.5,-5.5) -- (1.1,-5.5) node[right, black, text width=3.8cm] {\phantom{x} shortcut edges};
\node[elo, right] at (0.5,-1.5) {$\mathbf{u}$};
\node[right] at (1.2,-1.5) {$\mathbf u=132$};
\node[elo, right] at (0.5,-2.5) {$\mathbf{v}$};
\node[right] at (1.2,-2.5) {$\mathbf v=3312$};
\end{scope}
\begin{scope}[xshift=18cm, yshift=5.7cm]
\path[draw=black, ultra thick] (-3,0.75) node[halott] (c) {} -- (1,1.5) node[halott] (b) {} -- (4,0.75) node[halott] (a) {} -- (c);
\node[elo] at (0,0) (o) {$\mathbf{O}$} [level distance=13mm, draw=red, ultra thick, sibling distance=25mm]
  child{node[halott] (1) {} child[missing] child[missing] child[sibling distance=15mm]{node[halott] (13){} child{node[elo] (132) {$\mathbf{u}$}} } }
  child[missing]
  child{node[halott] (3) {} child[sibling distance=15mm]{node[halott] (31){} } child[missing]
                            child[sibling distance=15mm]{node[halott] (33){} child{node[halott] (331){} child{node[elo] (3312) {$\mathbf{v}$}} } child[missing] } };

\foreach \x in {1,13,31,3,331,33}
  \path[draw=blue, very thick, dashed] (a) -- (\x);
\foreach \x in {3,33}
  \path[draw=blue, very thick, dashed] (c) -- (\x);
\foreach \x in {13,132,31}
  \path[draw=blue, very thick, dashed] (o) -- (\x);
\path[draw=blue, very thick, dashed] (b) -- (1);
\foreach \x in {3,33}
  \path[draw=blue, very thick, dashed] (3312) -- (\x);
\path[draw=blue, very thick, dashed] (1) -- (132);
\path[draw=blue, very thick, dashed] (3) -- (331);

\foreach \x in {a,b,c}
  \path[draw=black, ultra thick] (o) -- (\x);
\end{scope}

\end{tikzpicture}\label{fig:RANasTree}
\caption{Tree like structure of a realisation of $\RAN_2(8)$}
\end{figure}
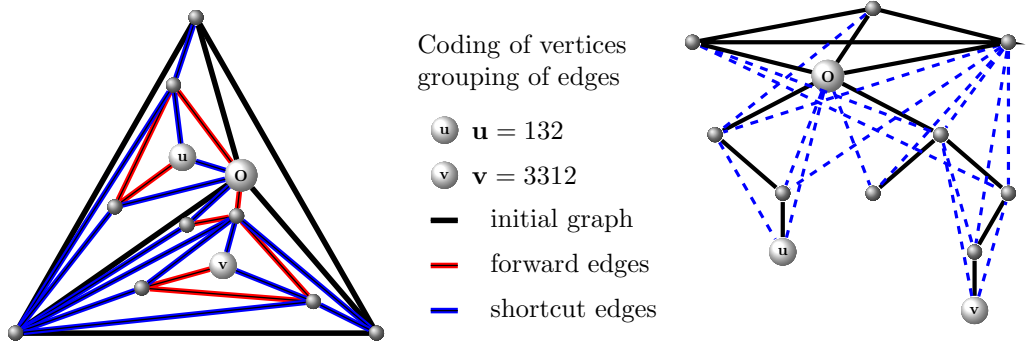

The grouping of the edges reveals the tree like structure of RANs. Interpreting the initial graph as the root, the forward edges are the \emph{edges of the tree}: along them we can go deeper down in the hierarchy of the graph. The shortcut edges \emph{only run along a tree branch:} between vertices that are in the same line of descent, so we can `climb up' to the root faster along these edges, see Figure 2.

\subsection{Distances in RANs and EANs: the main idea}
The decomposition above of edges of $\RAND(n)$ makes it possible to analyse distances in the graph.
Suppose we want to determine the distance between two arbitrary vertices $u,v$ with codes $\mathbf u, \mathbf v$. Then we can do it as follows:

First, determine the generation of their deepest common ancestor $\mathbf u\wedge \mathbf v$. Then see, how deep $\mathbf u$ and $\mathbf v$ are in the tree from $\mathbf u\wedge \mathbf v$, i.e. determine the length of their code below $\mathbf u\wedge \mathbf v$.
Finally, determine how fast can we reach the deepest common ancestor along the shortcut edges in these two branches, i.e. what is the minimal number of hops we can go up from $\mathbf u$ and $\mathbf v$  to $\mathbf u\wedge \mathbf v$?

If we pick $u,v$ uniformly at random, then we have to determine the \emph{typical length of codes} in the tree and the \emph{typical number of shortcut edges} needed to reach the typical common ancestor. If on the other hand we want to analyse the diameter or the flooding time, we have to find a `long' branch with `many' shortcut edges.
Clearly, one can look at the vertex of \emph{maximal depth} in the tree: but then - by an independence argument about the characters in the code and the length of the code -  with high probability the code of the maximal depth vertex in the tree will show \emph{typical behaviour} for the number of shortcut edges. On the other hand, we can calculate how many \emph{slightly shorter branches} are there in the tree. Then, since there are many of them, it is more likely that one of them has a code with more shortcut edges needed than typical.
Hence, we study the typical depth and also how many vertices are at larger, atypical depths of a branching process that arises from the forward edges of $\RAN$s. The effect of the shortcut edges on the distances is determined using renewal theory (also done in \cite{Albenque:2008:hopinRAN}) and large deviation theory.
 Finally, we optimise parameters such that we achieve the maximal distance by an entropy vs energy argument.

\subsection{Combinatorial analysis of shortcut edges}
Now we investigate the effect of shortcut edges on this tree.
By symmetry it is not hard to see that both a uniformly picked individual in the tree has a code where at each position, each character is uniform in $\Sigma_d$ and further, for two uniformly picked vertices with codes $\mathbf u$ and $\mathbf v$ the characters in these codes after the position $|\mathbf u \wedge \mathbf v|+1$ are again uniformly distributed.
Lemma \ref{lemma::neighbors} part (a) says that the shortcut edges of a vertex $\mathbf u$ in the tree are leading exactly to $T_i \mathbf u$, the prefixes of $\mathbf u$ achieved by deleting every character after the last occurrence of character $i$ in the code.
Recall that $P_i \mathbf u$ denotes the postfix of $\mathbf u$ that starts at the last occurrence of the character $i\in\Sigma_d$ in the code of $\mathbf u$.
Hence, let us denote the operator that gives the prefix with length $\min_i |T_i \mathbf u|$ by $T_{\min}$ and the length of the maximal cut by
\begin{equation}\label{def::Y_dRAN}
Y_d^{\scriptscriptstyle{AN}}(\mathbf u):=|\mathbf u|- | T_{\min} \mathbf u| = \max_{i\in \Sigma_d} \{ |P_i \mathbf u| \}.
\end{equation}
This is the length of the maximal hop we can achieve from the vertex $\mathbf u$ upwards in the tree via a shortcut edge.

Consecutively using the operator $T_{\min}$ we can decompose $\mathbf{u}$ into independent blocks, where each block, when reversed, ends at the first appearance of all the characters in $\Sigma_d$. We call such a block \emph{full coupon collector block}. E.g.\ for $\mathbf{u}=113213323122221131$ this gives $1|132|1332|31222\big|21131$.
Let us denote the total number of blocks needed in this decomposition by
\begin{equation}\label{eq:total-blocks-in-u}
N(\mathbf u)= \max\{k+1: T_{\min}^{k} \mathbf u \neq \varnothing \}.
\end{equation}
Note that this is not the only way to decompose the code in such a way that we always cut only postfixes of the form $P_i \mathbf u$: e.g.\  $1|132|1332|31222 211\big|31$ gives an alternative cut with the same number of blocks.

The following (deterministic) claim establishes that the decomposition along repetitive use of $T_{\min}$ (maximal possible hops) is optimal.
\begin{claim}\label{cl:optimal} Suppose we have an arbitrary code $\mathbf u$ of length $n$ with characters from $\Sigma_d$, that we want to decompose into blocks in such a way that \emph{from right to left}, each block ends at the first appearance of some character in that block.  Then, the minimal number of blocks needed is given by $N(\mathbf u)$.
\end{claim}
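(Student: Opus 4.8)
The plan is to recast the statement as a one-dimensional greedy covering problem and run the standard greedy-exchange argument. First I would record the elementary observation that a substring $B=u_a\ldots u_b$ is an \emph{admissible} block in the sense of the claim exactly when its leftmost symbol $u_a$ does not occur again inside $B$, i.e.\ $u_a\notin\{u_{a+1},\ldots,u_b\}$; equivalently, $B$ is of the form $P_i$ of the word $u_a\ldots u_b$ with $i=u_a$. Consequently the admissible \emph{suffixes} of a word $\mathbf w$ are precisely the $P_i\mathbf w$ over symbols $i$ occurring in $\mathbf w$, and the longest of these is exactly the suffix removed by $T_{\min}$. A second trivial but crucial remark: admissibility is preserved under truncation from the right, i.e.\ if $[a,b]$ is admissible and $a\le b'\le b$ then $[a,b']$ is admissible, since the defining condition only involves $u_a$ and the symbols to its right.

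Write $g(\mathbf w)$ for the minimum number of admissible blocks in a decomposition of $\mathbf w$; trivially $g(\mathbf w)\le N(\mathbf w)$, since the repeated-$T_{\min}$ decomposition is admissible. The key lemma I would prove is \textbf{prefix-monotonicity}: if $\mathbf w'$ is a prefix of $\mathbf w$ then $g(\mathbf w')\le g(\mathbf w)$. To see this, take an optimal decomposition of $\mathbf w$ and let $[a,b]$ be the block containing the last position of $\mathbf w'$. The blocks lying strictly to the left of $a$ decompose the prefix $u_1\ldots u_{a-1}$ into at most $g(\mathbf w)-1$ pieces, and appending the single block $[a,\abs{\mathbf w'}]$, which is admissible by the truncation remark, yields a decomposition of $\mathbf w'$ with at most $g(\mathbf w)$ blocks.

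With these in hand the claim follows by induction on $\abs{\mathbf u}$. The empty word is trivial. For $\mathbf u\neq\varnothing$ write $\mathbf u=\mathbf u^-G$, where $G$ is the longest admissible suffix of $\mathbf u$, so that $G$ is the first block removed by the greedy procedure and $N(\mathbf u)=1+N(\mathbf u^-)$. Given any optimal decomposition of $\mathbf u$, its rightmost block $[a,n]$ is an admissible suffix, hence no longer than $G$, so $\mathbf u^-$ is a prefix of $u_1\ldots u_{a-1}$; the remaining $g(\mathbf u)-1$ blocks decompose $u_1\ldots u_{a-1}$, so by prefix-monotonicity and the induction hypothesis $N(\mathbf u^-)=g(\mathbf u^-)\le g(\mathbf u)-1$. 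Therefore $N(\mathbf u)=1+N(\mathbf u^-)\le g(\mathbf u)\le N(\mathbf u)$, which forces equality.

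The only real care is in the bookkeeping for words that do not contain all $d+1$ symbols, so that the last (leftmost) greedy block need not be a full coupon-collector block: phrasing everything through ``longest admissible suffix'' rather than ``shortest suffix containing all symbols'' makes the argument uniform, and the truncation remark is precisely what lets an admissible block of one word remain admissible inside a prefix of it. I expect the exchange step to be routine once prefix-monotonicity is set up; getting that lemma (and the clean characterization of admissible suffixes as the $P_i$-cuts) right is the main point.
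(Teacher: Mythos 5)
Your proof is correct, and it takes a genuinely different (if closely related) route from the paper's. The paper reverses the word, calls the block endpoints \emph{barriers}, and runs a ``greedy stays ahead'' induction over block indices: it shows that the $i$-th barrier $B_i^\star$ of the repeated-$T_{\min}$ decomposition always lies at least as far as the $i$-th barrier $B_i$ of any competing admissible decomposition, whence the greedy decomposition reaches the end of the word in the fewest blocks. You instead induct on the length of the (unreversed) word, and your key lemma is prefix-monotonicity of the optimal block count $g$, derived from the observation that admissibility of a block $[a,b]$ depends only on $u_a$ not recurring to its right and is therefore closed under right-truncation. Both are standard exchange arguments for greedy optimality, but the key lemmas are different, and yours buys two things: it is uniform over words that do not contain every symbol of $\Sigma_d$ (so the leftmost block need not be a full coupon-collector block --- a case the paper's sketch glosses over, since its step ``the greedy block contains all characters, so no admissible block starting no later can extend past it'' is really a coupon-collector statement), and the clean characterization of admissible suffixes as exactly the cuts $P_i\mathbf u$ makes the identity $N(\mathbf u)=1+N(T_{\min}\mathbf u)$ do all the bookkeeping. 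What the paper's version buys in exchange is the slightly stronger pointwise statement $B_i\le B_i^\star$ for every $i$, not just the equality of the final counts. No gaps: the only edge cases (empty prefix in the monotonicity lemma, nonemptiness of the longest admissible suffix of a nonempty word) are trivially satisfied.
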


\begin{proof} Reverse the code to be read from left to right, and call the end of the blocks \emph{barriers}. Denote the position of the $i$-th barrier by $B_i$ in a decomposition, that does not use full coupon collector blocks all the time, and let $B_i^\star$ stand for the same in the decomposition with full coupon collector blocks. We show that $B_i\le B_i^\star$ , which, combined with the fact that the total number of blocks needed is $\max\{ i+1:  B_i<n \}$, implies the claim.
For the first block, clearly $B_1\le B_1^\star$ holds. If $B_1=B_1^\star$, we can cut the first block and compare the next one: let us denote by $j$ the first index that $B_j<B_j^\star$: now the block between $B_j$ and $B_{j+1}^\star$ is \emph{longer} than the block between $B_{j}^\star, B_{j+1}^\star$, so  two things can happen for $B_{j+1}$: either the block between $B_j$ and $B_{j+1}^\star$ is still an allowed block, and then we can put $B_{j+1}=B_{j+1}^\star$ or it is not, then we have to move the barrier $B_{j+1}$ forward, i.e. $B_{j+1}<B_{j+1}^\star$. From here we can proceed similarly by induction on $k$ for $j+k$.
\end{proof}
For a code of length $k$ with i.i.d. uniform characters from $\Sigma_d$ let
\begin{equation}\label{def::N_k}
H_k:= \max\{i: \sum\nolimits_{j=1}^i Y_d^{(i)} \le k\},
\end{equation}
where $Y_d^{(i)}$ are i.i.d copies of $Y_d$ in \eqref{def:y}, i.e.\ $H_k$ is the number of consecutive occurrences of full coupon collector blocks in the code.
\begin{lemma}\label{lem:graph-to-code} Suppose $\mathbf u$ is a code of length $k$ with uniform random characters from $\Sigma_d$ at each position.  Then
\[ \ba Y_d^{\scriptscriptstyle{AN}}(\mathbf u)\  &{\buildrel d \over = }\  Y_d\wedge k, \\
N(\mathbf u)\  &{\buildrel d \over = }\ H_k.
\ea\]
\end{lemma}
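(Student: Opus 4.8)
The plan is to \emph{read the code backwards}. The letters of $\mathbf u$ are i.i.d.\ uniform on $\Sigma_d$, so the reversed word $u_k u_{k-1}\cdots u_1$ is again a sequence of $k$ i.i.d.\ uniform $\Sigma_d$-valued draws, and in this reversed reading both cut operators become transparent: the \emph{last} occurrence of a symbol $i$ in $\mathbf u$ is the \emph{first} occurrence of $i$ among $u_k,u_{k-1},\dots$, so $|P_i\mathbf u|$ is the number of backwards draws needed to see $i$ for the first time (and $T_i\mathbf u=\varnothing$ if $i$ never occurs in $\mathbf u$). Consequently $Y_d^{\scriptscriptstyle{AN}}(\mathbf u)=|\mathbf u|-|T_{\min}\mathbf u|$ is exactly the \emph{coupon-collector completion time} of the i.i.d.\ uniform draws $u_k,u_{k-1},\dots$, truncated at $k$, the truncation occurring precisely when $\mathbf u$ misses at least one of the $d+1$ symbols; and $T_{\min}\mathbf u$ is $\mathbf u$ with its last $Y_d^{\scriptscriptstyle{AN}}(\mathbf u)$ letters removed, i.e.\ with its shortest suffix containing all of $\Sigma_d$ (a full coupon collector block, read backwards) deleted, or the empty word if no such suffix exists.

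For the first identity I would then only need the classical description of the coupon-collector time encoded in \eqref{def:y}: collecting the $i$-th still-missing coupon costs an independent $\mathrm{Geometric}(i/(d+1))$ number of draws, so the total is $Y_d=\sum_{i=1}^{d+1}X_i$. Reading $u_k,u_{k-1},\dots$ as the successive draws therefore gives $Y_d^{\scriptscriptstyle{AN}}(\mathbf u)\buildrel d\over= Y_d\wedge k$ directly: on $\{Y_d\le k\}$ all symbols appear within the first $k$ draws and the completion time is $Y_d$, while on $\{Y_d>k\}$ some symbol is missing from $\mathbf u$ and $Y_d^{\scriptscriptstyle{AN}}(\mathbf u)=k$.

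For the second identity I would iterate the peeling. One application of $T_{\min}$ removes the first full coupon collector block (of length $Y_d^{(1)}:=Y_d^{\scriptscriptstyle{AN}}(\mathbf u)$) from the right end of $\mathbf u$; conditionally on $Y_d^{(1)}$ the word that remains, $T_{\min}\mathbf u$, has deterministic length $k-Y_d^{(1)}$ and — by the strong Markov property of the i.i.d.\ sequence $u_k,u_{k-1},\dots$ at the stopping time $Y_d^{(1)}$ — consists again of i.i.d.\ uniform letters, independent of $Y_d^{(1)}$. Thus we are back in the setting of the lemma with $k$ replaced by $k-Y_d^{(1)}$, and an induction on $k$ shows that the successive $T_{\min}$-cuts strip off from the right i.i.d.\ full blocks of lengths $Y_d^{(1)},Y_d^{(2)},\dots$, each distributed as $Y_d$, until what is left is too short to contain all $d+1$ symbols; this leftover (possibly empty) prefix is the last block, killed by the next application of $T_{\min}$. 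The number of \emph{full} blocks stripped off is $\max\{i:\,Y_d^{(1)}+\cdots+Y_d^{(i)}\le k\}=H_k$ by \eqref{def::N_k}, and $N(\mathbf u)$ in \eqref{eq:total-blocks-in-u} counts exactly these together with the leftover block, which yields $N(\mathbf u)\buildrel d\over= H_k$ (up to the trivial contribution of the last, possibly incomplete, block, which shifts $N(\mathbf u)$ by at most one and is irrelevant for all later uses).

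The main obstacle — in fact the only place real care is needed — is the renewal step inside the induction: one must check that cutting off the random coupon-collector-completion prefix genuinely leaves a word that is i.i.d.\ uniform \emph{and} independent of the cut length, so that the block lengths form a true i.i.d.\ renewal sequence and not merely a stationary one. This is where the i.i.d.-uniform structure of the code itself — a symmetry statement about $\RAND$, recorded in the paragraph preceding \eqref{def::Y_dRAN} — is essential, and it is precisely this renewal picture that makes the renewal-theoretic and large-deviation estimates of Section~\ref{sec::ProofDist} applicable. The remaining points — the boundary cases $k<d+1$, and bookkeeping the leftover block — are routine.
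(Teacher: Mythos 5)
Your proof is correct and follows essentially the same route as the paper: reverse the code so that last occurrences become first occurrences from the end, identify $\max_i|P_i\mathbf u|$ with a coupon-collector completion time truncated at $k$, and iterate $T_{\min}$ to peel off i.i.d.\ full blocks, so that the block count is a renewal count $H_k$. Your explicit handling of the strong-Markov/renewal step and of the off-by-one contribution of the final incomplete block is a bit more careful than the paper's one-line assertion, but it is the same argument.
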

\begin{proof}
The last occurrence of any character $i\in \Sigma_d$ in a uniform code is the \emph{first occurrence from backwards}  of the same character. Hence, reverse the code of $u$, and then $|P_i \mathbf u|$ is the position of the first occurrence of character $i$ in a uniform sequence of characters, truncated at $k$, since $|\mathbf u|=k$.  Maximizing this over all $i\in \Sigma_d$ we get the well-known coupon collector problem, that has distribution $Y_d$.
For the second part, since $N(\mathbf u)$ cuts down full coupon collector blocks from the end of the ode of $u$ consecutively, the maximal number of cuts possible is exactly the number of consecutive full coupon collector blocks in the reversed code of $\mathbf u$, an i.i.d. code of length $k$. Since the length of each block has distribution $Y_d$, and they are independent, the statement follows.
\end{proof}

Recall $\mu_d, \sigma_d^2$ from \eqref{def:mud}. From basic renewal theory \cite{Fellerkonyv}
the following  central limit theorem holds
\begin{equation}\label{eqn::H_nCLT}
\frac{H_k-k/\mu_d}{\sqrt{k\sigma^2_d/\mu_d^3}}\toindis \mathcal{N}(0,1).
\end{equation}

\section{Distances in $\RAN$s and $\EAN$s}\label{sec::ProofDist}
Now we make use of the previous section and prove Theorems \ref{thm::MainRes}, \ref{thm::Diam} and \ref{thm::MainResEAN}.
Recall that the endpoints of a forward edge $u \to uk$ corresponds to two vertices $u$ and $uk$ ($k\in\Sigma_d$) where the code of $u$ can be obtained by dropping the last symbol of $uk$. When taking into account only the forward edges of a $\RAN$, there is a natural bijection between the vertices of the $\RAN$ and the nodes of a continuous time branching process (CTBP) \cite{Athreya_BP}, or a Bellman-Harris process.

Namely, consider a CTBP where the offspring distribution is deterministic: each individual has $d+1$ children and the lifespan of each individual is i.i.d.\  exponential with mean one. Thus, after birth a node is active for the duration of its lifespan, then splits, becomes inactive and at that instant gives birth to its $d+1$ offspring that become active for their i.i.d.\ $\mathrm{Exp}(1)$ lifespan.

The bijection between the CTBP at the split times and a $\RAN_d$ is the following: the individuals that have already split in the BP are the vertices already present in the $\RAN$, while the active individuals in the BP correspond to the active cliques in the $\RAN$. This holds since in a $\RAN$ at every step $d+1$ new active cliques arise in place of the one which becomes inactive. Furthermore, in a $\RAN$ an active clique is chosen uniformly at random in each step which is  -- by the memoryless property of exponential variables -- equivalent to the fact that the next individual to split in the BP is a uniformly chosen active individual.

We aim to prove Theorem \ref{thm::MainRes} first.
Now that the tree-structure of $\RAN_d(n)$ revealed to form a CTBP,  we first investigate the distance from the root in this tree.
We write $G_m$ for the \emph{generation} of the $m$-th splitting vertex in the BP, i.e.\ its graph distance from the root. The next lemma describes the typical size of $G_m$ as well as the \emph{degree of relationship} of two uniformly picked active individuals in the BP.

\begin{lemma}\label{lemma::asympOfG_m}
Let $Z$ denote a standard normal random variable. Then as $m\to\infty$
\begin{align}
\frac{G_m-\frac{d+1}{d}\log m}{\sqrt{\frac{d+1}{d}\log m}} \toindis Z.  \label{eq:CLT-single}
\end{align}
Further, let $G_{U}, G_{V}$ denote the generation of two independently and uniformly picked \emph{active vertices} in the BP after the $m$th split, and let us write $G_{U\wedge V}$ for the generation of the latest common ancestor of $U,V$. Then the marginal distribution $G_U\ {\buildrel d \over =}\  G_{m+1}$, and
\be  \left(\frac{G_{U}-G_{U\wedge V}-\frac{d+1}{d}\log m}{\sqrt{\frac{d+1}{d}\log m}}, \frac{G_{V}-G_{U\wedge V}-\frac{d+1}{d}\log m}{\sqrt{\frac{d+1}{d}\log m}}\right) \toindis (Z,Z'),
    \label{eq:CLT-joint}
    \ee
    where $Z,Z'$ are independent standard normal distributions.
\end{lemma}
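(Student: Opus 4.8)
The plan is to exploit the bijection between the CTBP (deterministic $(d+1)$-ary splitting, i.i.d.\ $\mathrm{Exp}(1)$ lifespans) and $\RAN_d$, and to analyze the generation $G_m$ of the $m$-th splitting vertex by relating it to a discrete urn/Pólya-type scheme, or equivalently to the embedded Yule-like process. First I would observe that, because the next individual to split is always uniform among the currently active ones, the sequence of splits is exactly a random recursive construction: at step $m$ there are $dm+d+1$ active individuals (cliques), and the one that splits is uniform. Writing $G_m$ for its generation, one has the recursive distributional identity $G_m \overset{d}{=} G_{K}+1$ where $K$ is the index of the parent split and, crucially, conditioning on the ``history'' of which generation splits, $G_m$ is a sum of indicators. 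The cleanest route is: the generation of a uniformly picked active individual after $m$ splits equals the number of ancestors on its spine, and each ancestor along the spine corresponds to a ``record''-type event. Concretely, order the active cliques; a uniformly chosen one sits in a subtree, and tracing back to the root, at the moment the $j$-th split occurred there were $dj+d+1$ cliques and the probability that the chosen clique's ancestral line was the one that got selected to split is $\tfrac{d}{dj+d+1} \sim \tfrac1j$ (the $d$ because after a split the parent is replaced by $d+1$ cliques, $d$ of which keep a fixed ancestor). Hence $G_m$ is (in distribution) a sum of independent Bernoulli variables $\xi_j$ with $\Pv(\xi_j=1)=\tfrac{d}{dj+d+1}$, $j=1,\dots,m$.

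Given that representation, \eqref{eq:CLT-single} is immediate from Lyapunov's CLT: $\Ev[G_m]=\sum_{j=1}^m \tfrac{d}{dj+d+1} = \tfrac{d+1}{d}\log m + O(1)$ and $\Var{G_m}=\sum_{j=1}^m \tfrac{d}{dj+d+1}\big(1-\tfrac{d}{dj+d+1}\big) = \tfrac{d+1}{d}\log m + O(1)$, so mean and variance agree to leading order, and the third absolute moment is $O(\log m)$ as well, so Lyapunov's ratio tends to $0$. This gives the first claim. For the marginal statement $G_U \overset{d}{=} G_{m+1}$: a uniformly picked active vertex after the $m$-th split is, by the memoryless property, exactly the clique that would split next, i.e.\ at the $(m+1)$-st step, and its generation is $G_{m+1}$.

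For the joint statement \eqref{eq:CLT-joint} I would use the same spine decomposition for two independently and uniformly chosen active vertices $U,V$. Trace both ancestral lines back; they coincide down to the latest common ancestor $G_{U\wedge V}$ and then split into two lines. The key structural fact is that, conditionally on the split at which $U$ and $V$ diverge (say the $\ell$-th split, so $G_{U\wedge V}\le$ generation of that vertex), the two residual spines from that point on evolve over the disjoint sets of split-steps assigned to the two sub-subtrees, and the indicator variables governing the two residual generations are independent of each other (they concern disjoint ``coordinates'' of the construction). One then shows $G_{U\wedge V}=O_{\Pv}(1)$ — indeed, two uniformly chosen cliques have a common ancestor of bounded generation with high probability, because the chance that both lie in the same depth-$g$ subtree decays like a constant to the power $g$ (each of the $d+1$ initial cliques is equally likely, etc.) — so it contributes nothing to the scaling $\sqrt{\tfrac{d+1}{d}\log m}$. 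Each residual generation $G_U-G_{U\wedge V}$, $G_V-G_{U\wedge V}$ is then again a sum of roughly $\log m$-mean, $\log m$-variance independent Bernoullis over essentially disjoint index sets, so a two-dimensional Lyapunov CLT gives joint convergence to $(Z,Z')$ with $Z,Z'$ independent. The main obstacle is making the independence-of-residual-spines claim fully rigorous: one has to set up the bijection so that ``which active clique splits next'' conditioned on past generations is genuinely a product over the two sub-subtrees, and control the (negligible but nonzero) dependence coming from the shared early history and the random divergence time $\ell$; a size-biasing / exchangeability argument, or a direct coupling with two independent copies of the single-vertex recursion run on $m/(d+1)$-ish many steps each, is the way I would close this gap.
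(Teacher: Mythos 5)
Your overall route---representing the generation along the spine as a sum of independent Bernoulli indicators, applying a Lindeberg/Lyapunov CLT, and conditioning on the common ancestor for the joint statement---is exactly the paper's (which follows B\"uhler). However, there is a concrete error in the key computation: the success probability of your indicators is wrong. Tracing the ancestral line of a uniformly chosen active clique, its ancestor at time $j$ is uniform among the $dj+d+1$ active cliques present then, and the generation increases at step $j$ precisely when that ancestor is one of the $d+1$ newborn cliques; hence $\Pv(\xi_j=1)=\tfrac{d+1}{dj+d+1}$, not $\tfrac{d}{dj+d+1}$. (All $d+1$ children of a split clique have generation one larger than the parent; the fact that only $d$ of them contain a given fixed vertex of the parent clique is irrelevant to the generation count.) With your numerator $d$ the mean is $\sum_{j=1}^m \tfrac{d}{dj+d+1}=\log m+O(1)$, \emph{not} $\tfrac{d+1}{d}\log m+O(1)$ as you then assert, so your computation is internally inconsistent and would produce the wrong centering constant. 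The paper's version of the same identity, $\Pv(\ind_i=1)=\tfrac{d+1}{di+1}$ (an immaterial index shift), does give mean and variance $\tfrac{d+1}{d}\log m+O(1)$, after which the Lindeberg CLT argument goes through as you describe; the marginal identity $G_U\overset{d}{=}G_{m+1}$ is argued the same way in both places.

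For the joint CLT you correctly identify the structure but explicitly leave the independence of the two residual spines as a gap. The paper closes it by conditioning on the splitting \emph{time} $\tau_{U\wedge V}$ of the latest common ancestor rather than on its generation: conditionally on $\tau_{U\wedge V}$, the pair $(G_U-G_{U\wedge V},\,G_V-G_{U\wedge V})$ is a sum over $i>\tau_{U\wedge V}$ of independent pairs of indicators whose joint law is written down explicitly (within a pair the two indicators are only asymptotically independent, both being $1$ exactly on the vanishing-probability event that the two lines merge at step $i$), and tightness of $\tau_{U\wedge V}$ is obtained from the product formula $\Pv(\tau_{U\wedge V}\le k)=\prod_{i>k}\bigl(1-\Pv(\tau_{U\wedge V}=i\mid\tau_{U\wedge V}\le i)\bigr)$ together with $\sum_i \tfrac{d+1}{(di+1)di}<\infty$. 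Your heuristic that two uniform cliques lie in the same depth-$g$ subtree with probability decaying geometrically in $g$ is not correct as stated, since the subtrees have random and very unequal sizes; the tightness of $\tau_{U\wedge V}$ is the statement you actually need, and the product/summability argument is the clean way to get it. These two points---the wrong indicator probability and the unproved joint-independence step---are the gaps you would need to repair.
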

\begin{proof} The lemma follows from the \emph{proof of} the CLT for the \emph{degree of relationship} in B\"uhler \cite[Theorem 3.3]{Buhler}.
The distribution of the generation of the $m$-th chosen vertex $G_m$ is the sum of independent Bernoulli random variables (see \cite{Buhler} or \cite{RvdH_FPP}). More precisely,
\begin{equation}\label{eqn::distofG_m}
G_m\stackrel{\mathrm{d}}{=}\sum_{i=1}^m\ind_i, \text{ where } \Prob{\ind_i=1}=\frac{d+1}{d i +1},
\end{equation}
The indicator $\ind_{i}$ is $0$ or $1$ whether in the line of ancestry of the $m$-th splitting vertex, the individual at the $k$-th split is surviving from time $k-1$ or newborn. From this identity it is easy to calculate the expectation and variance of $G_m$:
\begin{equation}\label{eqn::Exp(G_m)}
\Expect{G_m}=\frac{d+1}{d}\log m, \quad \quad \Var{G_m}=\Expect{G_m}+O\left(m^{-1}\right).
\end{equation}
The central limit theorem \eqref{eq:CLT-single} holds for the standardization of $G_m$ since the collection of Bernoulli random variables $\{\ind_i\}_{i=1}^m, \;m=1,2,\ldots$ satisfy Lindeberg's condition.

For the second statement, $G_U\ {\buildrel d \over =}\  G_{m+1}$ is immediate from the fact that the $m+1$th vertex to split in the process is a uniformly picked vertex among the active vertices.

Next, let us write $\tau_{U\wedge V}$ for the time when the latest common ancestor of $U$ and $V$ splits.
Then, conditioned on $\tau_{U\wedge V}$ the following two variables are independent and their joint distribution can be written as
\be\label{eq:degree-of-rel} (G_U\!-\!G_{U\wedge V}\!, G_V\!-\!G_{U\wedge V}\!)\ {\buildrel d \over =}\  \left(\sum_{i=\tau_{U\wedge V}}^{m+1} \ind_{i}, \sum_{i=\tau_{U\wedge V}}^{m+1} \ind_{i}'\right), \ee
where
\[\ba \Pv&\left((\ind_i, \ind_i')=(1,0) | \tau_{U\wedge V}< i \right)= \frac{d+1}{di+1}\frac{i-1}{i}\\
\Pv&\left((\ind_i, \ind_i')=(0,1)| \tau_{U\wedge V}< i \right)=\frac{d+1}{di+1}\frac{i-1}{i}\\
\Pv&\left((\ind_i, \ind_i')=(1,1), \tau_{U\wedge V}=i | \tau_{U\wedge V}\le i \right)= \frac{(d+1)}{(d i +1)i}\ea\] and conditioned on $\tau_{U\wedge V}$, different indices are independent. To see that $\tau_{U\wedge V}$ has a limiting distribution we can use \cite[Lemma 3.3]{Buhler}:
\be\label{eq:tau-dist} \Pv( \tau_{U\wedge V}\le k) = \prod_{i=k+1}^m (1-\Pv( \tau_{U\wedge V}=i |\tau_{U\wedge V}\le i), \ee
where the factors on the right hand side are the probabilities that the two ancestral lines do not merge at the $i$-th split. This is tending to a proper limiting distribution since
 \[ \sum_{i=1}^\infty \Pv( \tau_{U\wedge V}=i |\tau_{U\wedge V}\le i)=\sum_{i=1}^\infty \frac{d+1}{(di+1)di}\le \infty.\]
 Hence, $\tau_{U\wedge V}$ has a limiting distribution, and clearly $(m-\tau_{U\wedge V})/m\to 1$. From here, one can show the joint convergence of \eqref{eq:degree-of-rel} using Lindeberg CLT for linear combinations of $\sum_{i=\tau_{U\wedge V}+1}^{m+1} (\alpha \ind_{i}+\beta \ind_{i}')$ and get that the two variables in \eqref{eq:CLT-joint} tend jointly to a 2 dimensional standard normal variable.
\end{proof}

Now we are ready to prove Theorem \ref{thm::MainRes}.
\begin{proof}[Proof of Theorem \ref{thm::MainRes}]
Pick two uniform active cliques $u,v$ in the graph.
We write $\mathbf u, \mathbf v$  for the codes of these cliques and the corresponding vertices, $G_{U}=|\mathbf u|, G_{V}=|\mathbf v|$ for their generation.
As before, we write $\mathbf u\wedge \mathbf v$ for the latest common ancestor, i.e. their longest common prefix. Let us define the distinct postfixes after $\mathbf u\wedge \mathbf v$ by
\begin{equation*}\label{def:postfixes}
\mathbf u= (\mathbf u\wedge \mathbf v)\widetilde  {\mathbf u}, \quad \mathbf v= (\mathbf u\wedge \mathbf v) \widetilde {\mathbf  v}.
\end{equation*}
Note that $|\mathbf u\wedge \mathbf v|=G_{U\wedge V}, \ |\widetilde  {\mathbf u}|=G_U-G_{U\wedge V}, \ |\widetilde  {\mathbf v}|=G_V-G_{U\wedge V}$.
 By Lemma \ref{lemma::neighbors} and Claim \ref{cl:optimal} the length of the shortest path between $u,v$ satisfies:
\begin{equation*}\label{eq:distance-formula}
\dist(u,v)= N(\widetilde  {\mathbf u})+N( \widetilde {\mathbf v}),
\end{equation*}
and Lemma \ref{lemma::asympOfG_m} describes the typical distance between $\mathbf u$ and $\mathbf v$
along the tree. Note that except the first character, the characters in the codes $\widetilde{\mathbf{u}}$ and  $\widetilde{\mathbf{v}}$ are i.i.d.\ uniform on $\Sigma_d$.
Hence, by Lemma \ref{lem:graph-to-code} combined with  \eqref{eqn::Exp(G_m)}
\begin{equation*}
\Expect{H_{G_U-G_{U\wedge V}}} = \Expect{\Pv(H_{G_U-G_{U\wedge V}}| G_U-G_{U\wedge V})} = \frac{1}{\mu_d}\frac{d+1}{d}\log m(1+o(1)).
\end{equation*}
To obtain a central limit theorem for $H_{G_U-G_{U\wedge V}}$ observe that
\be\label{eq:HG-CLT} \ba  \frac{H_{G_U-G_{U\wedge V}}-\frac{1}{\mu_d}\frac{d+1}{d}\log m}{\sqrt{\frac{d+1}{d}\log m\,\sigma_d^2/\mu^3_d}}
&=\frac{H_{G_U-G_{U\wedge V}}-\frac{1}{\mu_d}(G_U-G_{U\wedge V}) }{\sqrt{(G_U-G_{U\wedge V})\sigma^2_d /{\mu^3_d}}} \cdot \sqrt{\frac{G_U-G_{U\wedge V}}{\frac{d+1}{d}\log m}}  \\
&\ \ \ + \frac{\frac{1}{\mu_d}(G_U-G_{U\wedge V})-\frac{1}{\mu_d}\frac{d+1}{d}\log m}{\sqrt{\frac{d+1}{d}\log m\,\sigma^2_d/\mu^3_d}}. \ea\ee
The first factor on the right hand side, conditionally on $G_U-G_{U\wedge V}$ with $G_U-G_{U\wedge V}\to \infty$, tends to a standard normal random variable independent of $G_U$ by the renewal CLT in \eqref{eqn::H_nCLT} and the second factor tends to one in probability by \eqref{eq:CLT-joint}. By \eqref{eq:CLT-joint} again, the second term tends to a $\mathcal{N}(0,\mu_d/\sigma^2_d)$. The two limiting normals are independent, thus
\begin{equation} \label{eqn::H_G_mCLT}
\frac{H_{G_U-G_{U\wedge V}}-\frac{1}{\mu_d}\frac{d+1}{d}\log m}{\sqrt{\frac{d+1}{d}\log m \,\sigma^2_d/\mu^3_d}} \toindis \mathcal{N}(0,1+\mu_d/\sigma^2_d).
\end{equation}
By conditioning first on $G_{U\wedge V}$, (as in the proof of Lemma \ref{lemma::asympOfG_m}) and using that the characters in the code of $\widetilde{\mathbf u}, \widetilde{\mathbf v}$ are all i.i.d. uniform in $\Sigma_d$, one can similarly show that  $(H_{G_U-G_{U\wedge V}}, H_{G_V-G_{U\wedge V}})$
tend jointly to two independent copies of $\CN(0, 1+\mu_d/\sigma^2_d )$ variables.
Using now that $\Hop(n)=H_{G_U-G_{U\wedge V}}+H_{G_V-G_{U\wedge V}}$, the first statement of the Theorem \ref{thm::MainRes} immediately follows by normalising such that the total variance is $1$.
The second statement follows by calculating how many active cliques a vertex with degree $k$ is contained:
a vertex with degree $d+1$ is contained in $d+1$ cliques, and when the degree of a node $v$ increases by $1$, then the number of cliques containing $v$ increases by $d-1$, thus a vertex with degree $k\geq d+1$ is contained in exactly
\be\label{eq::ean-active-k}
   A_k=k(d-1)+d^2-d+2
\ee
active cliques. It is also not hard to see that the total number of active clicks after $n$ steps is $A(n)=d n + d+2$.
This means, that picking two vertices $x,y$ according to the size-biased distribution given in \eqref{eq:biased} is equivalent that the uniformly picked active cliques $U,V$ are neighbouring these vertices. The distance between $x,y$ is then between $N(\widetilde  {\mathbf u})+N( \widetilde {\mathbf v})-2, N(\widetilde  {\mathbf u})+N( \widetilde {\mathbf v})$ since by Lemma \ref{lemma::neighbors} $x=T_i \mathbf u$ for some $i\in \Sigma_i$, hence we can gain at most $1$ hop by considering $x$ instead of the clique $U$ and the same holds for $y$ and $V$. Hence, the CLT for $U,V$ implies a CLT for two vertices picked according to the probabilities in \eqref{eq:biased}.
\end{proof}

\begin{remark}
\normalfont If we pick two vertices (x,y) of $\RAND(n)$ uniformly at random, then we should randomise the index $m$ of $G_m$ to be discrete uniform in $[1,2,\dots,m]$. One can easily see that in this case the CLT does not hold anymore. We still have
$\frac{\dist(x,y)}{2 \frac{d+1}{d}\log m}\toinp 1$.
\end{remark}

Since the proofs are similar, we prove Theorem \ref{thm::MainResEAN} next.
\begin{proof}[Proof of Theorem \ref{thm::MainResEAN}]
The proof follows analogous lines to the proof of Theorem \ref{thm::MainRes}, hence we give only the sketch.
The main idea here is that the tree can be viewed as a BP where at step $i$, each active individual splits with probability $q_i$ or stays active for the next step with probability $1-q_i$. Hence, Lemma \ref{lemma::asympOfG_m} can be modified as follows: the generation of a uniformly picked active individual at step $m$ satisfies
\[ \widetilde G_U\  {\buildrel d\over =}\ \sum_{i=1}^m \widetilde\ind_i,\]
where $\Pv(\widetilde\ind_i=1)=q_i$ and different indices are independent. This corresponds to following the ancestral line of $U$: in this ancestral line the generation is increased by $1$ in the $i$th step if the individual active at step $i$ split and stays if it did not split. Since splitting happens with probability $q_i$ at step $i$, the result follows and the CLT for $\widetilde G_U$ holds by Lindeberg CLT.
Now, for two uniformly picked individuals $U,V$ we have
\[ \left( G_U-G_{U\wedge V}, G_V-G_{U\wedge V}\right) {\buildrel d\over =} \left(\sum_{i=\tau_{U\wedge V}}^m \widetilde\ind_i, \sum_{i=\tau_{U\wedge V}}^m \widetilde\ind_i' \right),\]
where different indices are independent and conditioned on $\tau_{U\wedge V}$, $\widetilde\ind_i, \widetilde\ind_i'$ are independent indicators with $\Pv(\widetilde\ind_i=1)=\Pv(\widetilde\ind_i'=1)=q_i$. Since the variance $\sum_i q_i(1-q_i) \to \infty,$ the joint CLT follows in a similar manner then for Lemma \ref{lemma::asympOfG_m} if we can show that $\tau_{U\wedge V}$ has a limiting distribution. For this note that similarly as in \eqref{eq:tau-dist}
\be\label{eq:dist-of-tau-uv} \Pv(\tau_{U\wedge V}\le k)= \prod_{i=k+1}^m \left(1- \Pv\left(\tau_{U\wedge V}=i | \tau_{U\wedge V}\le i \right)\right)\ee
and the factors on the right hand side express that the two ancestral lines of $U,V$ do not merge yet at step $i$. Let us write $A(i)$ for the number of active vertices at step $i$. Then at step $i$ there are $Z_i:=\mathrm{Bin}(A(i), q_i)$ many vertices that split, each of them producing $d+1$ new active vertices, and hence the probability that the two ancestral lines merge at step $i$, conditioned on $A(i), A(i+1)$ equals
\be  \Pv\left(\tau_{U\wedge V}=i | \tau_{U\wedge V}\le i, A(i), A(i+1) \right)= \frac{Z_i (d+1) d}{(A(i+1) (A(i+1)-1)},\ee
where $A(i+1)=A(i)+Z_i (d+1)$, the new number of active vertices after the $i$th split.
If the sum in $i\in\N$ on the right hand side in the previous display is a.s. finite then \eqref{eq:dist-of-tau-uv} ensures that $\tau_{U\wedge V}$ has a proper limiting distribution. Hence we aim to show that this is the case whenever
the total number of vertices $N(n)\to\infty$. That is,
\[ \sum_{i=1}^\infty \frac{Z_i(d+1)d}{ A(i+1) (A(i+1)-1)} \le \infty \quad \text{ a.s. on }\{N(n)\to \infty\}.\]
Since $A(i+1)=N(i+1) d + d+1$,  and $Z_i=N(i+1)-N(i)$, we can approximate the above sum by
\[ (d+1) \sum_{i=1}^\infty \frac{d(N(i+1)-N(i))}{ (d N(i+1))^2 }.  \]
Now we can interpolate $N(i)$ with a continuous function and then this sum can be approximated by the integral
\[\int_{1}^\infty \frac{N'(x)}{N(x)^2}\mathrm dx < \infty\]
as long as $N(n)\to \infty$. Note that as long as $\sum_{n\in \N}q_n=\infty$, this is the case by the second Borel-Cantelli lemma: in each step we add at least a new node with probability $q_n$.
 The CLT then for the distances follows in the exact same manner as in the proof of Theorem \ref{thm::MainRes}.
\end{proof}

Next we aim to prove Theorem \ref{thm::Diam}, but we need some preliminary lemmas first. Lemma \ref{lemma::tailofG_m} shows that the tail of $G_m$ basically behaves as the the tail of the sum of $\log m$ i.i.d. Poi($(d+1)/d$) random variables and gives the depth of the longest branch in our BP, while Lemma \ref{lemma::LDforH_n} gives a large deviation bound for $H_k$ (recall \eqref{def::N_k}).
\begin{lemma}\label{lemma::tailofG_m}
The exact asymptotic tail behaviour of $G_m$ is given by
\begin{equation}\label{eqn::P(G_m>clogm)}
\lim_{m\to\infty}\frac{\log \left(\Prob{G_m>c\log m} \right)}{\log m}= c-\frac{d+1}{d}-c\log\left(\frac{d}{d+1}c\right)=:f_d(c).
\end{equation}
Further, in the branching process tree having $m$ vertices, the deepest branch satisfies
\begin{equation}\label{eqn::maxG_itoc_d}
\frac{\max_{i\leq m} G_i}{\log m} \toinp \tilde c_d,
\end{equation}
where $\tilde c_d:= \{    c_d>(d+1)/d,\; f_d(  c_d)=-1 \}$ (as defined in Theorem \ref{thm::Diam}).
\end{lemma}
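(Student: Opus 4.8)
Everything can be read off the Bernoulli‑sum representation \eqref{eqn::distofG_m}, $G_m\stackrel{d}{=}\sum_{i=1}^m\ind_i$ with $\Prob{\ind_i=1}=\tfrac{d+1}{di+1}$. Its cumulant generating function is $\psi_m(\lambda):=\log\Expect{\mathrm e^{\lambda G_m}}=\sum_{i=1}^m\log\!\big(1+\tfrac{d+1}{di+1}(\mathrm e^{\lambda}-1)\big)$, and since $\sum_{i=1}^m\tfrac{d+1}{di+1}=\tfrac{d+1}{d}\log m+O(1)$ while all but finitely many summands are $o(1)$, a Taylor expansion gives $\psi_m(\lambda)=\tfrac{d+1}{d}(\mathrm e^{\lambda}-1)\log m+O(1)$ uniformly for $\lambda$ in compact sets. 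Thus $G_m$ behaves like a Poisson variable of mean $\tfrac{d+1}{d}\log m$, and $f_d$ is exactly minus its large‑deviation rate function: $-f_d(c)=\sup_{\lambda}\{\lambda c-\tfrac{d+1}{d}(\mathrm e^{\lambda}-1)\}$, with the supremum attained at $\mathrm e^{\lambda^{\star}}=\tfrac{d}{d+1}c$, which exceeds $1$ precisely when $c>\tfrac{d+1}{d}$.

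\textbf{The tail \eqref{eqn::P(G_m>clogm)}.} The upper bound is the exponential Markov inequality $\Prob{G_m>c\log m}\le\mathrm e^{-\lambda c\log m+\psi_m(\lambda)}$ evaluated at $\lambda^{\star}$, which gives $\limsup_m\tfrac1{\log m}\log\Prob{G_m>c\log m}\le f_d(c)$. For the matching lower bound I would use the standard change‑of‑measure argument: set $\lambda_\eta:=\lambda^{\star}(c+\eta)$ for small $\eta>0$, tilt each Bernoulli by $\lambda_\eta$ so that under the tilted law $G_m$ concentrates around $(c+\eta)\log m$ (the tilted variance being $O(\log m)$), restrict to the window $\{c\log m\le G_m\le(c+2\eta)\log m\}$ whose tilted probability tends to $1$ by Chebyshev, bound the Radon–Nikodym factor on that window by $\mathrm e^{-\lambda_\eta(c+2\eta)\log m+\psi_m(\lambda_\eta)}$, and let $\eta\downarrow0$; this yields $\liminf_m\ge f_d(c)$. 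The only care needed is to keep the $O(1)$ error in $\psi_m$, which is harmless after dividing by $\log m$. Combining the two bounds gives \eqref{eqn::P(G_m>clogm)}.

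\textbf{Upper bound in \eqref{eqn::maxG_itoc_d}.} A union bound suffices: $\Prob{\max_{i\le m}G_i>c\log m}\le\sum_{i=1}^m\Prob{G_i>c\log m}\le m\cdot\mathrm e^{-\lambda^{\star}c\log m+\psi_m(\lambda^{\star})+O(1)}=m^{1+f_d(c)+o(1)}$, where I used $\psi_i(\lambda^{\star})\le\psi_m(\lambda^{\star})+O(1)$ for $\lambda^{\star}>0$. Since $f_d'(c)=-\log(\tfrac{d}{d+1}c)<0$ for $c>\tfrac{d+1}{d}$, the function $f_d$ is strictly decreasing there; hence for $c=\tilde c_d+\eps$ one has $1+f_d(c)<1+f_d(\tilde c_d)=0$, so the probability vanishes and $\max_{i\le m}G_i/\log m\le\tilde c_d+\eps$ with high probability.

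\textbf{Lower bound in \eqref{eqn::maxG_itoc_d} — the main obstacle.} Here one must count long root‑paths of the branching tree rather than a single $G_i$. Overlaying the i.i.d.\ $\mathrm{Exp}(1)$ lifetimes, along any fixed root‑to‑generation‑$g$ path the birth times have i.i.d.\ $\mathrm{Exp}(1)$ increments, so by Gamma–Poisson duality such a vertex is present after the $m$‑th split with probability $\Prob{\Gamma(g,1)\le t_m}=\Prob{\mathrm{Poi}(t_m)\ge g}$, where $t_m=\tfrac1d\log m+O_{\Prob}(1)$ is the (concentrated) time of the $m$‑th split. Since the full $(d+1)$‑ary tree has $(d+1)^g$ such paths, the expected number realised equals $(d+1)^g\Prob{\mathrm{Poi}(t_m)\ge g}=m^{1+f_d(g/\log m)+o(1)}$, which for $g=\lceil(\tilde c_d-\eps)\log m\rceil$ tends to infinity because $f_d(\tilde c_d-\eps)>f_d(\tilde c_d)=-1$; and if such a vertex is present then $\max_{i\le m}G_i\ge g-1$. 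The difficulty is the second moment: for $g/\log m$ close to $\tilde c_d$ the naive count of realised deep paths has second moment of strictly larger order than the square of its mean (pairs branching at a macroscopic generation dominate), so vanilla Paley–Zygmund is not enough — this is the same phenomenon as for the height of random recursive trees, where $\tilde c_d\to e$. I would circumvent it with the truncated/barrier second moment familiar from branching random walk, restricting to paths that descend at a roughly uniform rate so that the correlations become controllable; cleaner still is to invoke the weighted‑height theorem of Broutin–Devroye \cite{Broutin:2006:LargeDev} directly, whose hypotheses are met by this tree and whose height constant is precisely the root $\tilde c_d$ of $f_d(c)=-1$. Either route gives $\max_{i\le m}G_i/\log m\ge\tilde c_d-\eps$ with high probability, which together with the union‑bound upper bound completes \eqref{eqn::maxG_itoc_d}. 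I expect this second‑moment/barrier bookkeeping (equivalently, locating the precise reference) to be the one genuinely delicate point; all the rest reduces to the Poisson‑tail computation already contained in $\psi_m$.
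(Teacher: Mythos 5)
Your proposal is correct and follows essentially the same route as the paper: the paper also identifies $G_m$ as asymptotically Poisson with mean $\tfrac{d+1}{d}\log m$ via the cumulant generating function (invoking G\"artner--Ellis rather than writing out the Chernoff/tilting argument), obtains the upper bound on $\max_{i\le m}G_i$ from the summability of $m^{f_d(c)}$ for $c>\tilde c_d$ (Borel--Cantelli in place of your union bound), and, exactly as you do, delegates the delicate lower bound to the Broutin--Devroye weighted-height theorem for random lopsided trees. Your diagnosis that the naive second moment fails and that the barrier/truncated second moment or the Broutin--Devroye reference is the genuinely hard point matches the paper's treatment.
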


\begin{proof} Let us calculate the following:
\begin{equation*}
 \frac{1}{\log m} \log \Expect{e^{\theta G_m}} = \frac{1}{\log m}\sum_{i=1}^m\log\Big(1+\frac{d\!+\!1}{d i+1}(e^\theta-1)\Big).
\end{equation*}
From the series expansion of $\log(1+x)$ we can see that
\begin{equation*}
\lim_{m\to\infty} \frac{1}{\log m} \log \Expect{e^{\theta G_m}} = \frac{d+1}{d}(e^\theta-1),
\end{equation*}
which is the cumulant generating function of a $\xi=\mathrm{Poi}((d+1)/d)$ random variable. The rate function of such a random variable is $-f_d(c)$. Hence, by the G\"{a}rtner-Ellis theorem for $\xi^{(i)}\sim \xi$ i.i.d.
\begin{equation*}
\lim_{m\to\infty} \frac{\log\left(\Prob{G_m>c\log m}\right)}{\log m}= \lim_{m\to\infty}\frac{ \log\left(\Prob{\sum_{i=1}^{\log m}\xi^{(i)}>c\log m}\right) }{\log m}= f_d(c).
\end{equation*}
As for \eqref{eqn::maxG_itoc_d}, since $f_d(\frac{d+1}{d})=0$, choosing any $c>\tilde c_d$ results in a summable bound on \eqref{eqn::P(G_m>clogm)}. Thus by the Borel-Cantelli lemma, for any $c>\widetilde c_d$ there are only finitely many $m$ such that the event $\{G_m>c\log m\}$ holds, giving the whp upper bound $\tilde c_d \log m$ on the depth of the BP. The lower bound can be obtained based on work of Broutin and Devroye \cite{Broutin:2006:LargeDev} since our BP is a special case of so-called random lopsided trees \cite{Choi:2001:lopsided, Kapoor:1989:lopsided}.
\end{proof}

\begin{remark}\label{remark::G_mtoinp}
\normalfont
The function $f_d(c)$ in \eqref{eqn::P(G_m>clogm)} is strictly negative outside $c=(d+1)/d$. This immediately implies the weaker result
$\frac{G_m}{\log m}\toinp \frac{d+1}{d}$.
\end{remark}

\begin{lemma}\label{lemma::LDforH_n}
For $1\leq\beta\leq \mu_d/(d+1)$, the number of consecutive occurrences of full coupon collector blocks in a code of length $k$ satisfies the large deviation
\begin{equation}\label{eqn::H_n>nb/mu}
\lim_{k\to\infty}\frac{1}{k}\log\left(\Pv\Big(H_k>\frac{\beta}{\mu_d}k\Big)\right) =-\frac{\beta}{\mu_d}I_d\Big(\frac{\mu_d}{\beta}\Big),
\end{equation}
where the large deviation rate function $I_d(x)$ of $Y_d$ was defined in \eqref{def:rate-function}.
\end{lemma}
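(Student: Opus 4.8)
The plan is to invert the renewal counting variable $H_k$ into an i.i.d.\ sum of copies of $Y_d$ and then invoke Cram\'er's theorem for the left tail of i.i.d.\ sums. First I would record the deterministic identity behind \eqref{def::N_k}: writing $S_m:=\sum_{j=1}^m Y_d^{(j)}$ with $Y_d^{(j)}$ i.i.d.\ $\sim Y_d$ and $S_0:=0$, one has $\{H_k\ge m\}=\{S_m\le k\}$ for every $m\ge 0$. Hence, with $m_k:=\lfloor\tfrac{\beta}{\mu_d}k\rfloor+1$,
\[ \Pv\big(H_k>\tfrac{\beta}{\mu_d}k\big)=\Pv\big(S_{m_k}\le k\big),\qquad \frac{m_k}{k}\to\frac{\beta}{\mu_d},\quad \frac{k}{m_k}\to\frac{\mu_d}{\beta}=:x. \]
Since $1\le\beta\le\mu_d/(d+1)$ we get $x\in[d+1,\mu_d]=[\mathrm{ess\,inf}\,Y_d,\ \Ev Y_d]$, so this is a below-the-mean large deviation for $S_{m_k}$ at level $\approx m_k x$.

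Next I would apply Cram\'er's theorem to $S_m$. The cumulant generating function $\Lambda(\lambda):=\log\Ev[e^{\lambda Y_d}]=\sum_{i=1}^{d+1}\log\Ev[e^{\lambda X_i}]$ is finite on a neighbourhood of the origin (indeed on $(-\infty,\log\tfrac{d+1}{d})$, the binding factor being the geometric $X_1$ with the smallest success probability), so Cram\'er applies and for every $y\in[d+1,\mu_d]$,
\[ \lim_{m\to\infty}\frac1m\log\Pv(S_m\le my)=-\inf_{z\le y}I_d(z)=-I_d(y), \]
where the second equality holds because $I_d$ is convex, equals $+\infty$ below $d+1$, and is nonincreasing on $[d+1,\mu_d]$ with minimum $0$ at $\mu_d$. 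To pass from the deterministic threshold $k$ to $m_k\cdot(k/m_k)$, with $k/m_k$ only converging to $x$, I would use a monotonicity sandwich: for fixed $\varepsilon>0$ and $k$ large, $m_k(x-\varepsilon)\le k\le m_k(x+\varepsilon)$, so $\Pv(S_{m_k}\le m_k(x-\varepsilon))\le\Pv(S_{m_k}\le k)\le\Pv(S_{m_k}\le m_k(x+\varepsilon))$; taking $\tfrac1{m_k}\log(\cdot)$, letting $k\to\infty$ and using the display above gives $-I_d(x-\varepsilon)\le\liminf\le\limsup\le-I_d(x+\varepsilon)$, and letting $\varepsilon\downarrow0$ with $I_d$ continuous on $(d+1,\mu_d)$ yields $\tfrac1{m_k}\log\Pv(S_{m_k}\le k)\to-I_d(x)$. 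Multiplying by $m_k/k\to\beta/\mu_d$ produces \eqref{eqn::H_n>nb/mu}.

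The two endpoints of the $\beta$-range need a separate word. For $\beta=1$ the statement is the trivial identity $0=-\tfrac1{\mu_d}I_d(\mu_d)=0$, consistent with $\Pv(S_m\le m\mu_d)\to\tfrac12$ by the CLT. For $\beta=\mu_d/(d+1)$, i.e.\ $x=d+1=\mathrm{ess\,inf}\,Y_d$, the lower-tail event degenerates and I would argue directly: $\Pv(S_{m_k}\le k)\ge\Pv(S_{m_k}=m_k(d+1))=\Pv(Y_d=d+1)^{m_k}=\big((d+1)!/(d+1)^{d+1}\big)^{m_k}$, while $\Pv(S_{m_k}\le k)\le\Pv(S_{m_k}\le m_k(d+1+\varepsilon))$ for large $k$; since $I_d(d+1)=-\log\Pv(Y_d=d+1)$ and $I_d(d+1+\varepsilon)\to I_d(d+1)$ as $\varepsilon\downarrow0$, the same sandwich closes.

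I do not expect a genuine obstacle here: the substance is the clean inversion $\{H_k\ge m\}=\{S_m\le k\}$ together with a textbook Cram\'er bound. The only points requiring care are (i) checking $\Lambda$ is finite near $0$ so that Cram\'er applies and the $I_d$ of \eqref{def:rate-function} is the correct rate function for the lower tail (only $\lambda\le0$ is used); (ii) the monotonicity/continuity argument replacing the level $k$ by $m_k x$ despite $k/m_k\to x$ only in the limit; and (iii) the lattice boundary $x=d+1$, where the event collapses to the single atom $\{S_{m_k}=m_k(d+1)\}$ and must be treated by hand.
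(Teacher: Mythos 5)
Your proof is correct and takes essentially the same route as the paper's: the inversion $\{H_k\ge m\}=\{S_m\le k\}$ for $S_m=\sum_{i=1}^m Y_d^{(i)}$ followed by Cram\'er's theorem for the lower tail is exactly the paper's one-line argument, which you merely flesh out with the integer-part sandwich and the finiteness of the cumulant generating function near $0$. The one quibble is the right endpoint $\beta=\mu_d/(d+1)$, where the strict inequality forces $\Pv(H_k>k/(d+1))=0$ identically (since $H_k\le k/(d+1)$ always), so your bound $\Pv(S_{m_k}\le k)\ge\Pv(S_{m_k}=m_k(d+1))$ fails because $m_k(d+1)>k$ --- but this is a degeneracy of the lemma's formulation itself, not of your argument, and is immaterial since the optimizer in Theorem \ref{thm::Diam} is interior.
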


\begin{proof}
Let $Y_d^{(i)}$ be i.i.d. distributed according to $Y_d$. Since
\begin{equation*}
\Prob{H_k>\frac{\beta}{\mu_d}k} = \Pv\Big(\sum_{i=1}^{k\beta/\mu_d} Y_d^{(i)} < \left(\frac{\beta}{\mu_d}k\right)\cdot\frac{\mu_d}{\beta}\Big),
\end{equation*}
we can apply Cram\'er's theorem to obtain \eqref{eqn::H_n>nb/mu}.
\end{proof}

\begin{remark}\label{remark::LDfuncofY}
\normalfont
The rate function $I_d(x)$ has no explicit form. It can be computed numerically from
\begin{equation*}
I_d(x)=\lambda^*(x)\cdot x - \log\Expect{e^{\lambda^*(x) Y_d}},
\end{equation*}
where $\lambda^*(x)$ is the unique solution to the equation
$\frac{\partial}{\partial\lambda} \log\Expect{e^{\lambda Y_d}} =x$ and
\begin{equation*}
\log\Expect{e^{\lambda Y_d}} = \log d!-d\log(d+1)+(d+1)\lambda -\sum_{i=1}^d \log\Big(1-\frac{i}{d+1}e^{\lambda}\Big).
\end{equation*}
\end{remark}
Now we proceed to the proof of Theorem \ref{thm::Diam}.

\begin{proof}[Proof of Theorem \ref{thm::Diam}]
First, we have seen at the end of the proof of Theorem \ref{thm::MainRes} that switching from vertices to neighbouring active cliques only changes the distances by $2$, hence we rather investigate the diameter of the  vertices coded by active cliques. Let us denote the set of active cliques at step $m$ by $\CA(m)$. We have seen that $|\CA(m)|=dm+d+1$.
Our aim is to estimate the number of active cliques $u$ whose codes are relatively long i.e.\ at least $\alpha\tilde c_d\log m$ for $\alpha\in(0,1]$, and in their code $N(\mathbf u)$, the number of hops is larger than expected, i.e.\ at least $n\beta/\mu_d$ for $\beta\in[1,\mu_d/(d+1)]$. Define the indicator variables for each clique $u\in \CA(m)$
\begin{equation*}
J_u=\ind\big[G_{u}>\alpha\tilde c_d\log m, \ H_{G_u}>\frac{\beta}{\mu_d}\alpha\tilde c_d\log m\big].
\end{equation*}
Summing over $u\in \CA(m)$, the expected number of such cliques is
\begin{equation*}
|\CA(m)|\Ev\Big[\tfrac1{|\CA(m)|}\sum_{u \in \CA(m)} J_u\Big]
= (dm+d+1) \Prob{G_u>\alpha\tilde c_d\log m}\Pv\Big[H_{G_u}>\frac{\beta}{\mu_d}\alpha\tilde c_d\log m\Big],
\end{equation*}
since the length of a code and its coordinates are independent, and the expected value on the left hand side corresponds to a uniformly picked active clique $U\in \CA(m)$. By Lemma \ref{lemma::asympOfG_m} $G_u\  {\buildrel d\over =} G_{m+1}$, hence \eqref{eqn::P(G_m>clogm)} and \eqref{eqn::H_n>nb/mu} implies that
\begin{equation}\label{eqn::E(sumJu)}
\Ev\Big[\sum_{u \in \CA(m)} J_u\Big] = m^{\left(1+f_d(\alpha \tilde c_d) -\alpha \tilde c_d\frac{\beta}{\mu_d}I_d\left(\frac{\mu_d}{\beta}\right)\right)(1+o(1))} =: m^{g(\alpha,\beta)(1+o(1))}.
\end{equation}
We wish to choose $(\alpha,\beta)$ so that $\Pv\big(\sum_{u \in \CA(m)} J_u>0\big)>0$. Thus necessarily $g(\alpha,\beta)\geq 0$. We will need a upper bound on the second moment
\begin{equation*}
\Ev\Big[\Big(\sum_{u \in \CA(m)} J_u\Big)^2\Big]= \sum_{u \in \CA(m)} \Ev[J_u]+\sum_{u,v\in \CA(m), u\neq v} \Ev[J_u J_v].
\end{equation*}
Using the proof of Lemma \ref{lemma::asympOfG_m}, we know that the joint distribution of two uniformly picked individuals satisfies that their common ancestor $G_{u\wedge v}$ has a limiting distribution and conditioned on the splitting time $\tau_{u\wedge v}$ of $u\wedge v$, the joint distribution of $G_u-G_{u\wedge v}, G_v-G_{u\wedge v}$ can be described as the sum of indicators. Further, the characters after $u\wedge v$ in both codes are independent and uniform in $\Sigma_d$. Hence, it is not hard to see that
\begin{equation}\label{eqn::upperforE(JuJv)}
\sum_{u,v\in \CA(m), u\neq v} \Ev[J_u J_v] \le \Ev[ \sum_{u\in \CA(m)} J_u ]^2.
\end{equation}
From a Cauchy-Schwarz inequality followed by \eqref{eqn::upperforE(JuJv)}
\begin{equation}\label{eq:CS}
\Pv\big(\!\!\sum_{u \in \CA(m)} J_u>0\big) \geq \frac{\Ev[ \sum_{u\in \CA(m)} J_u ]^2}{\Ev\Big[\Big(\sum_{u \in \CA(m)} J_u\Big)^2\Big]} \geq \frac{\Ev[ \sum_{u\in \CA(m)} J_u ]^2}{\Ev[ \sum_{u\in \CA(m)} J_u ]+\Ev[ \sum_{u\in \CA(m)} J_u ]^2}\,>0,
\end{equation}
if and only if $g(\alpha,\beta)\geq0$. From this it is immediate that for the correct order of magnitude of the diameter of the graph we need to pick the largest product $\alpha\beta$ with the property that $g(\alpha,\beta)\ge 0$. Since $g(\alpha, \beta)$ is decreasing both in $\alpha$ and in $\beta$,
if $g(\alpha,\beta)>0$ then we can increase one or both of the parameters $\alpha'\ge \alpha,  \beta'\ge \beta$ s.t. $\al'\beta'>\al\beta$ and $g(\alpha',\beta')=0$. Hence, the diameter can be achieved if we restrict the problem to $(\alpha,\beta)$ pairs so that the exponent $g(\alpha,\beta)$ equals zero. Thus we arrive at the maximization problem \eqref{eqn::defalpha*beta} in Theorem \ref{thm::Diam}:
\begin{equation*}
\max_{\alpha,\beta} \Big\{\frac{\tilde c_d}{\mu_d}\alpha\beta \;:\; g(\alpha,\beta) = 0,\; (\alpha,\beta)\in(0,1]\times[1,\frac{\mu_d}{d+1}] \Big\}.
\end{equation*}
We show that this maximization problem has a unique solution $(\widetilde\alpha,\widetilde \beta)$ in Lemma \ref{lemma::SolofMaxProblem} below.

Now apply the second moment method in \eqref{eq:CS} with the maximising $(\widetilde\alpha,\widetilde \beta)$ for a lower bound and any other $(\alpha', \beta'): \alpha'\beta'>\widetilde\alpha\widetilde \beta$ and Markov's inequality for an upper bound to finally conclude
\begin{equation}\label{eqn::maxH_Gu}
\frac{\max_{u\in \CA(m)}H_{G_u}}{\log m} \toinp \frac{\tilde c_d}{\mu_d}\tilde\alpha\!\cdot\!\tilde\beta.
\end{equation}

The statement of Theorem \ref{thm::Diam} for the flooding time now follows from the fact that if $u$ is a uniformly picked active clique, then Lemma \ref{lemma::asympOfG_m} implies that the CLT holds for its generation $G_u$, and since the characters are uniform in the code of $u$, similarly as in \eqref{eq:HG-CLT},  the CLT holds for $H_{G_u}$ as well.  Further, since in $\Flood(u,v)$ we maximise the distance over the choice of the other vertex $v$, clearly whp we can pick $v$ such that the latest common ancestor $u\wedge v$ is the root itself. This combined with the fact that the distance changes only by at most $2$ if we consider active cliques instead of vertices in the graph implies
\[ \Flood(n)\stackrel{d}{=}H_{G_u}+ \max_{v\in \CA(n)}H_{G_v} \]
and the statement of the theorem follows from the distributional convergence of $\frac{H_{G_u}}{\log m}\toindis \frac{d+1}{d \mu_d}$ and \eqref{eqn::maxH_Gu}.

For the diameter we have \[ \frac{\Diam(n)}{\log m}\stackrel{d}{=}2\frac{\max_{v\in \CA(n)}H_{G_v}}{\log m},\] since for any $\ve>0$, whp there are at least two vertices that are not closely related to each other and both satisfy $H_{G_v}/ \log m > \frac{\widetilde c_d}{\mu_d} \widetilde \alpha\widetilde\beta(1-\ve)$, but whp there are no vertices that satisfy $H_{G_v}/ \log m >  \frac{\widetilde c_d}{\mu_d} \widetilde \alpha\widetilde\beta(1+\ve).$
\end{proof}

We are left to analyse the maximization problem:
\begin{lemma}\label{lemma::SolofMaxProblem}
The maximization problem \eqref{eqn::defalpha*beta} has a unique solution $(\tilde\alpha, \tilde\beta)\in(0,1]\times[1,\frac{\mu_d}{d+1}]$, and further this solution satisfies
\begin{equation*}
\tilde\alpha = \frac{1}{\tilde c_d}\frac{d+1}{d} \exp\big\{ \!-\!I'_d(\mu_d/\tilde\beta)\big\},\quad
\frac{\tilde\beta}{\mu_d}I_d\Big(\frac{\mu_d}{\tilde\beta}\Big) = \frac{1+f_d(\tilde\alpha \tilde c_d)}{\tilde\alpha \tilde c_d}.
\end{equation*}
\end{lemma}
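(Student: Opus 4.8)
Abbreviate the constraint in \eqref{eqn::defalpha*beta} as $g(\alpha,\beta):=1+f_d(\alpha\tilde c_d)-\alpha\beta\tfrac{\tilde c_d}{\mu_d}I_d(\mu_d/\beta)$, write $c:=\alpha\tilde c_d$, and set $\phi(\beta):=\beta I_d(\mu_d/\beta)$, so that $g=0$ reads $1+f_d(c)=\tfrac{c}{\mu_d}\phi(\beta)$; the problem is to maximize $\alpha\beta$ over $\{g=0\}$ inside the box $(0,1]\times[1,\mu_d/(d+1)]$. The plan uses only elementary properties of the ingredients: $f_d$ is strictly concave, $f_d'(c)=-\log(\tfrac{d}{d+1}c)$, hence maximal at $c=(d+1)/d$ with value $0$; $f_d(\tilde c_d)=-1$; and $cf_d'(c)-f_d(c)=\tfrac{d+1}{d}-c$, so $(1+f_d(c))/c$ is maximal at $c=1/d$. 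The rate function $I_d$ is finite and strictly convex on $[d+1,\mu_d]$ with $I_d(\mu_d)=I_d'(\mu_d)=0$ and $I_d'(x)\to-\infty$ as $x\downarrow d+1$ (the optimizing $\lambda$ in \eqref{def:rate-function} diverges at the left edge of the support of $Y_d$). Finally $\phi$, being up to reparametrisation a perspective function of the convex $I_d$, is convex and strictly increasing on $[1,\mu_d/(d+1)]$ with $\phi(1)=\phi'(1)=0$, $\phi''(1)=\mu_d^2I_d''(\mu_d)>0$, and $\beta\phi'(\beta)-\phi(\beta)=-\mu_d I_d'(\mu_d/\beta)$.

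The second displayed identity is merely the constraint $g(\tilde\alpha,\tilde\beta)=0$ divided through by $\tilde\alpha\tilde c_d$. For the first I would apply a Lagrange multiplier to the problem of maximizing $\alpha\beta$ subject to $g=0$ (equivalently: since $\phi$ is strictly increasing, solve $\beta=\beta(\alpha)$ from $g=0$ and impose $\tfrac{d}{d\alpha}(\alpha\beta(\alpha))=0$). The stationarity condition reads $\alpha\,\partial_\alpha g=\beta\,\partial_\beta g$; substituting $\partial_\alpha g=\tilde c_d f_d'(c)-\tfrac{\tilde c_d}{\mu_d}\phi(\beta)$ and $\partial_\beta g=-\alpha\tfrac{\tilde c_d}{\mu_d}\phi'(\beta)$ and using $\beta\phi'(\beta)-\phi(\beta)=-\mu_d I_d'(\mu_d/\beta)$, the $\phi(\beta)$-terms cancel and the relation collapses to $f_d'(\tilde\alpha\tilde c_d)=I_d'(\mu_d/\tilde\beta)$; inserting $f_d'(c)=-\log(\tfrac{d}{d+1}c)$ and exponentiating gives exactly $\tilde\alpha=\tfrac{1}{\tilde c_d}\tfrac{d+1}{d}\exp\{-I_d'(\mu_d/\tilde\beta)\}$.

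Existence and interiority. The feasible set is closed and bounded, hence compact, and it contains $(1,1)$ (as $f_d(\tilde c_d)=-1$, $I_d(\mu_d)=0$), so $\alpha\beta$ attains a maximum on it. Perturbing $(1,1)$ along $\{g=0\}$ by lowering $\alpha$ to $1-\varepsilon$ forces, via $\phi(1)=\phi'(1)=0$ and $\phi''(1)>0$, a jump $\beta=1+\Theta(\sqrt{\varepsilon})$, so $\alpha\beta=1+\Theta(\sqrt{\varepsilon})>1$; thus the maximum exceeds $1$, which rules out a maximizer on the faces $\alpha=1$ or $\beta=1$ (each forces $(\alpha,\beta)=(1,1)$, value $1$). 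On the remaining face $\beta=\mu_d/(d+1)$ one has $\alpha\beta=\tfrac{\mu_d c}{(d+1)\tilde c_d}$: if $c\le 1/d$ there then $\alpha\beta\le\tfrac{\mu_d}{d(d+1)\tilde c_d}=\tfrac{H(d+1)}{d\,\tilde c_d}<\tfrac{H(d+1)}{d+1}<1$ (using $\tilde c_d>(d+1)/d$ and $H(d+1)<d+1$), not maximal; if $c>1/d$, then on this face $\partial_\alpha g=\tfrac{\tilde c_d}{c}\big(cf_d'(c)-1-f_d(c)\big)=\tfrac{\tilde c_d}{c}\big(\tfrac1d-c\big)<0$, and with $\tfrac{d}{d\beta}(\alpha\beta)\big|_{\{g=0\}}=\big(\alpha\,\partial_\alpha g-\beta\,\partial_\beta g\big)/\partial_\alpha g$, $\alpha\,\partial_\alpha g-\beta\,\partial_\beta g=\alpha\tilde c_d\big(f_d'(c)-I_d'(\mu_d/\beta)\big)$ and $I_d'(\mu_d/\beta)=-\infty$ one gets $\tfrac{d}{d\beta}(\alpha\beta)\big|_{\{g=0\}}=-\infty$ there, so moving $\beta$ down off the face strictly increases $\alpha\beta$ — again not maximal. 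Hence the maximizer lies in the interior of the box, where $\partial_\beta g<0$ so $\nabla g\neq0$, and it therefore satisfies the stationarity relation, i.e.\ both displayed identities.

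Uniqueness. By strict monotonicity of $f_d'$ (decreasing) and $I_d'$ (increasing on $[d+1,\mu_d]$), the first-order locus $\{f_d'(\alpha\tilde c_d)=I_d'(\mu_d/\beta)\}$ is the graph of a strictly increasing function $\beta=\beta_\star(\alpha)$ on $[\tfrac{d+1}{d\tilde c_d},1]$, with $\beta_\star(\tfrac{d+1}{d\tilde c_d})=1$ and $1<\beta_\star(1)<\mu_d/(d+1)$; every interior stationary point lies on this graph. Along it, $g(\alpha,\beta_\star(\alpha))=1$ at $\alpha=\tfrac{d+1}{d\tilde c_d}$ and $g(\alpha,\beta_\star(\alpha))<0$ at $\alpha=1$, while $\tfrac{d}{d\alpha}g(\alpha,\beta_\star(\alpha))=\partial_\alpha g+\partial_\beta g\,\beta_\star'(\alpha)<0$ for $\alpha>\tfrac{d+1}{d\tilde c_d}$, because there $\partial_\beta g<0$, $\beta_\star'>0$, and on the locus $\partial_\alpha g=\tilde c_d\big(I_d'(\mu_d/\beta)-\tfrac1{\mu_d}\phi(\beta)\big)\le 0$. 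Hence $g(\,\cdot\,,\beta_\star(\cdot))$ has exactly one zero, which is the unique interior stationary point and, by the previous paragraph, the unique maximizer $(\tilde\alpha,\tilde\beta)$. The crux of the whole argument is the elimination of the face $\beta=\mu_d/(d+1)$ in the third paragraph — which hinges on the explicit maximiser $c=1/d$ of $(1+f_d(c))/c$ together with the infinite slope $I_d'(x)\to-\infty$ at $x=d+1$; granted those, the Lagrange cancellation of paragraph two and the monotonicity along $\beta_\star$ are routine.
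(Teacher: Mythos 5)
Your derivation of the two displayed identities is correct and coincides in substance with the paper's: both apply a Lagrange multiplier to $\max\alpha\beta$ on $\{g=0\}$, and your identity $\beta\phi'(\beta)-\phi(\beta)=-\mu_d I_d'(\mu_d/\beta)$ (with your $\phi(\beta)=\beta I_d(\mu_d/\beta)$) makes the cancellation leading to $f_d'(\tilde\alpha\tilde c_d)=I_d'(\mu_d/\tilde\beta)$ transparent. Where you genuinely diverge is in everything surrounding that computation, and there your argument is substantially more complete than the paper's. The paper does not discuss existence of a maximizer, does not rule out maximizers on the boundary of the box $(0,1]\times[1,\mu_d/(d+1)]$ (where the multiplier condition need not hold), and justifies uniqueness only by a bordered-Hessian check, which at best certifies a local conditional maximum at a stationary point rather than global uniqueness. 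You supply all three missing pieces: compactness of the feasible set together with the $\beta-1=\Theta(\sqrt{\varepsilon})$ perturbation of $(1,1)$ showing the optimal value exceeds $1$; elimination of the three faces, the delicate one being $\beta=\mu_d/(d+1)$, handled via the explicit maximiser $c=1/d$ of $(1+f_d(c))/c$ and the blow-up $I_d'(x)\to-\infty$ at the left edge of the support of $Y_d$; and uniqueness by strict monotonicity of $g$ along the first-order locus $\beta_\star(\alpha)$. Two small touch-ups are needed. First, on the face $\beta=1$ the equation $f_d(c)=-1$ has a second root $c_-<(d+1)/d$, so that face is not reduced to the single point $(1,1)$; the extra feasible point has $\alpha\beta=c_-/\tilde c_d<1$, so your conclusion is unaffected, but the parenthetical claim should be corrected. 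Second, compactness of the feasible set deserves the one-line remark that $g\to -1/d\neq 0$ as $\alpha\downarrow 0$, so the set is bounded away from the open end of the box; and the ``derivative equals $-\infty$'' step on the face $\beta=\mu_d/(d+1)$ is best phrased as a limiting statement (or by parametrising the constraint curve by $\alpha$, where $d\beta/d\alpha\to 0^-$ and $\tfrac{d}{d\alpha}(\alpha\beta)\to\beta>0$). With these cosmetic repairs the proof is complete and, in the existence/interiority/uniqueness parts, strictly stronger than the published one.
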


\begin{proof}
Define the Lagrange multiplier function $\CL(\alpha,\beta,\lambda):=\alpha\beta-\lambda g(\alpha,\beta)$. Necessarily the optimal $(\tilde\alpha,\tilde\beta)$ satisfies $\nabla \CL(\tilde\alpha,\tilde\beta,\widetilde \lambda)=0$. The partial derivative $\CL(\alpha,\beta,\lambda)'_\lambda=0$ simply gives the condition $g(\alpha,\beta)=0$. Further, the optimising $\widetilde \lambda$ can be expressed from $\CL(\alpha,\beta,\lambda)'_\alpha=0$ and $\CL(\alpha,\beta,\lambda)'_\beta=0$ and satisfies
\begin{equation*}
\widetilde \lambda=\frac{\beta}{\frac{\partial}{\partial\alpha}g(\alpha,\beta)} = \frac{\alpha}{\frac{\partial}{\partial\beta}g(\alpha,\beta)}.
\end{equation*}
After differentiation of $g(\alpha, \beta)=1+f_d(\alpha \tilde c_d) -\alpha \tilde c_d\frac{\beta}{\mu_d}I_d\left(\frac{\mu_d}{\beta}\right)$, rearranging terms and using that $f'_d(x)=-\log\big(\frac{d}{d+1}x\big)$ we obtain the first condition.
To check the sufficiency we look at the bordered Hessian
\begin{equation*}
\left[
  \begin{array}{ccc}
    0 & \frac{\partial g}{\partial \alpha} & \frac{\partial g}{\partial \beta} \\
    \frac{\partial g}{\partial \alpha} & \frac{\partial^2 \alpha\beta}{\partial \alpha^2 } & \frac{\partial^2 \alpha\beta}{\partial \alpha \partial \beta} \\
    \frac{\partial g}{\partial \beta} & \frac{\partial^2 \alpha\beta}{\partial \alpha \partial \beta} & \frac{\partial^2 \alpha\beta}{\partial \beta^2} \\
  \end{array}
\right] =
\left[
  \begin{array}{ccc}
    0 & \frac{\partial g}{\partial \alpha} & \frac{\partial g}{\partial \beta} \\
    \frac{\partial g}{\partial \alpha} & 0 & 1 \\
    \frac{\partial g}{\partial \beta} & 1 & 0 \\
  \end{array}
\right].
\end{equation*}
Its determinant is $\big(\frac{\partial^2 g(\alpha,\beta)}{\partial \alpha \partial \beta}\big)^2>0$, thus the condition is also sufficient.
We note that the solution can be approximated by numerical methods.
\end{proof}
\begin{remark}\normalfont We mention here the difficulties in the analysis of the diameter and flooding time of $\EAN$s: the main difficulty here is to understand the proper correlation structure of the codes (and shortcut edges) on the vertices of the BP: (a) The corresponding BP tree is fatter than the BP for $\RAN$ as soon as $n^{-1}=o(q_n)$. (b) In each step each vertex splits independently of the past with probability $q_n$. (a) and (b) together imply that even though the marginal distribution of the characters of a uniformly picked clique $U$ is uniform in $\Sigma_d$, still it is more likely that the 'neighbouring codes' are also present in the graph and hence codes for which $N(\mathbf u)$ is large are more likely to appear.  Hence we expect that the diameter will have a larger constant in front of $\sum q_i$ than the constant in front of $\log n$ for $\RAN$.  (Compare it to the diameter of the deterministic AN: with $q_n\equiv 1$ it is not hard to see that $\diam( \mathrm{AN}_{d}(n))=2n/(d+1))$.
\end{remark}

\section{Degree distribution of Apollonian networks}\label{sec::ProofDegDistr}
In this section we prove the results related to the degree distribution. We start with analysing $\RAN$s first.
\subsection{Sketch of Proof of Theorem \ref{739}}\label{sec::proofDegreeRAN}
The proof of Theorem \ref{739} determining the degree distribution of $\RAN$s consist of two main steps that are described in Lemmas \ref{706} and \ref{738}. The first lemma shows that $N_k(n)$ is getting close to its expectation uniformly in $k$ as $n\to \infty$. The method we describe here is an adaptation of  the standard martingale method and similar to that in \cite{BBCR,BRST,Remco}. Parallel to our work, Frieze and Tsourakakis \cite{Tsourakis:2012:wrongRANdiam} applied this method to show Lemma \ref{706} and \ref{738} for two dimension and their proof can be generalized to higher dimensions without any difficulty,
hence we only give a sketch of proof here.
\begin{lemma}{Frieze and Tsourakakis}\label{706} \cite{Tsourakis:2012:wrongRANdiam}
Fix $d\geq 2$ and $c_1>\sqrt{8}(d+1)$. Then
\[
  \Pv \left( \max_k \left| \widetilde N_k(n)-\Ev\big[\widetilde N_k(n)\big]\right| \geq c_1\sqrt{n\log n}\right)=o(1).
\]
\end{lemma}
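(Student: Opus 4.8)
The statement is a concentration inequality for $\widetilde N_k(n)$ around its mean, uniform in $k$, with a tail bound of order $\sqrt{n\log n}$. The natural route is the Azuma--Hoeffding martingale method. First I would fix $k$ and expose the randomness of $\RAN_d(n)$ one step at a time: let $\CF_i$ be the $\sigma$-algebra generated by the first $i$ choices of active cliques, and set $M_i := \Ev[\widetilde N_k(n)\mid \CF_i]$, so that $(M_i)_{i=0}^n$ is a Doob martingale with $M_0 = \Ev[\widetilde N_k(n)]$ and $M_n = \widetilde N_k(n)$. The key structural input is that changing the choice of a single active clique $C_j$ affects only a bounded number of vertices' degrees: when we insert $v_j$ into $C_j$ rather than into some other clique, we change the degree of $v_j$ itself, of the $d+1$ vertices of $C_j$, and — tracking the evolution of the altered subtree — only a bounded-in-expectation cascade downstream. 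More carefully, one shows a bounded martingale difference: $|M_i - M_{i-1}| \le c_0$ for an absolute constant $c_0$ depending only on $d$ (here $c_0$ of order $\sqrt 2$ times something like $d+1$, matching the constant $\sqrt 8 (d+1)$ after the factor $2$ in Azuma's exponent). This is where the consistency/tree structure from Lemma~\ref{lemma::neighbors} and Section~\ref{sec::strucureofRAN} helps: the degree of a vertex only increases when one of the active cliques containing it is selected, and a single re-routing of a choice modifies the future clique-selection probabilities in a controlled way, so a standard coupling argument gives that the expected number of affected degree-$k$ counts per step is uniformly bounded.

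**Applying Azuma and the union bound.** Once $|M_i - M_{i-1}| \le c_0$ is established, Azuma's inequality gives, for each fixed $k$,
\[
  \Pv\!\left( \left| \widetilde N_k(n) - \Ev[\widetilde N_k(n)] \right| \ge t \right) \le 2\exp\!\left( -\frac{t^2}{2 c_0^2 n} \right).
\]
Taking $t = c_1 \sqrt{n \log n}$ with $c_1 > \sqrt 8\,(d+1) \ge 2 c_0$ (choosing $c_0 = \sqrt 2\,(d+1)$, say, as the martingale-difference bound), the right-hand side is at most $2 n^{-c_1^2/(2c_0^2)} = 2 n^{-2+o(1)}$, which is $o(n^{-1})$. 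Since degrees in $\RAN_d(n)$ are bounded by $n + d + 1$, only $O(n)$ values of $k$ are relevant, so a union bound over $k$ yields
\[
  \Pv\!\left( \max_k \left| \widetilde N_k(n) - \Ev[\widetilde N_k(n)] \right| \ge c_1 \sqrt{n \log n} \right) \le (n+d+2)\cdot 2 n^{-c_1^2/(2c_0^2)} = o(1),
\]
as desired.

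**The main obstacle.** The only genuinely delicate point is verifying the bounded martingale difference $|M_i - M_{i-1}| \le c_0$ with the \emph{right} constant — i.e.\ showing that a single change in the $i$-th clique choice perturbs $\Ev[\widetilde N_k(n)\mid\CF_i]$ by at most an absolute constant, and not by something growing with $n$. The subtlety is that re-routing one insertion changes which cliques are active from step $i$ onward, so naively the two coupled processes could diverge. The fix is a coupling in which the two $\RAN$ processes agree on all choices after step $i$ except on the (bounded number of) cliques that differ, using the exchangeability of active cliques; one checks that the symmetric difference of the active-clique sets stays of bounded size in expectation, and since each degree-$k$ indicator is affected only through cliques in this symmetric difference, the conditional-expectation difference is bounded. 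As the excerpt notes, Frieze and Tsourakakis \cite{Tsourakis:2012:wrongRANdiam} carry this out for $d=2$ and the argument extends verbatim to general $d$ with $c_1 > \sqrt 8\,(d+1)$; accordingly we only sketch it here and refer to their work for the details.
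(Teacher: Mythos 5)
Your proposal follows essentially the same route as the paper, which for this lemma only cites Frieze and Tsourakakis and notes that the method mirrors its own proof of Lemma~\ref{lem:EAN1}: a Doob martingale exposing one clique choice at a time, a coupling showing that re-routing a single insertion changes the degrees of at most $2(d+1)$ vertices, Azuma--Hoeffding, and a union bound over the $O(n)$ admissible values of $k$. Two small corrections: the martingale-difference bound should be $c_0=2(d+1)$ rather than $\sqrt{2}(d+1)$ (this still gives exponent $c_1^2/(8(d+1)^2)>1$, hence $o(n^{-1})$ per $k$, which is all the union bound needs), and Azuma requires the coupling to bound the number of affected degrees \emph{almost surely}, not merely in expectation --- which the swap coupling (exchanging the sub-evolutions inside the two differing cliques) does provide deterministically.
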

This lemma tells us that $\widetilde N_k(n)$ is concentrating around its expected value. From this we immediately get the concentration of $\widetilde p_k(n)=\widetilde N_k(n)/(n+d+2)$ around its expected value. The second lemma approximates the difference between this expected value and $p_k$, the stationary distribution.

\begin{lemma}\label{738}\label{lem:degreeRAN}
There exists a probability distribution $\{ p_k\}_{k=d+1}^{\infty}$ for which for any $n\geq 0$ and for any $k\geq d+1$
\[
  \left| \Ev[\widetilde N_k(n)]- p_k (n+d+2) \right| \leq c_2\sqrt{n\log n}
\]
with some constant $c_2$. The distribution $\{p_k\}_{k\in \N}$ is determined in \eqref{k17} and it has a power-law asymptotic decay with exponent $\frac{2d-1}{d-1}\in (2,3]$ for $d\geq 2$.
\end{lemma}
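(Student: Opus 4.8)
The plan is to set up and solve the recursion satisfied by the expected degree counts $\Ev[\widetilde N_k(n)]$, extract the candidate stationary sequence $\{p_k\}$ as the fixed point of the associated difference equation, and then control the deviation $\Ev[\widetilde N_k(n)]-p_k(n+d+2)$ by a discrete Gr\"onwall-type argument. The starting point is a one-step identity. At step $n$ we choose an active clique uniformly among the $dn+d+1$ active cliques; the new vertex $v_n$ is created with degree exactly $d+1$, and each of the $d$ old vertices of the chosen clique has its degree incremented by $1$. Recall from \eqref{eq::ean-active-k} that a vertex of degree $k$ is contained in $A_k=k(d-1)+d^2-d+2$ active cliques, so conditionally on the graph at time $n-1$ the probability that a given degree-$k$ vertex gets hit is $A_k/(d(n-1)+d+1)$. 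Taking expectations, this gives a linear recursion of the schematic form
\[
 \Ev[\widetilde N_k(n)] = \Ev[\widetilde N_k(n-1)]\Bigl(1-\tfrac{A_k}{d n +1}\Bigr) + \Ev[\widetilde N_{k-1}(n-1)]\tfrac{A_{k-1}}{dn+1} + \ind\{k=d+1\},
\]
(with the convention $\widetilde N_{d}(n)\equiv 0$), valid for all $k\ge d+1$ and $n\ge 1$.

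Next I would guess the stationary solution by dropping the time-dependence: a sequence $\{p_k\}$ for which $p_k(n+d+2)$ is an exact solution of the recursion must satisfy $(d + A_k)\,p_k = A_{k-1}\,p_{k-1}$ for $k\ge d+2$, together with the boundary relation $(d+A_{d+1})\,p_{d+1}=d$ coming from the source term at $k=d+1$. This is a first-order linear recursion in $k$ whose solution is an explicit product, and telescoping $\prod_{j=d+1}^{k} \tfrac{A_{j-1}}{d+A_j}$ yields a ratio of Gamma functions; substituting $A_j=j(d-1)+d^2-d+2$ and simplifying the arguments (they come out to linear functions of $j$ with slope $d-1$, hence after rescaling by $d-1$ one gets shifted integer-spacing Gamma factors) produces exactly the closed form \eqref{eq:pkdef}. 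I would then verify that $\sum_k p_k = 1$ — this can be done either by a telescoping/creative-summation argument directly on the recursion $(d+A_k)p_k = A_{k-1}p_{k-1}$ (sum both sides over $k$ and use that $A_k p_k - A_{k-1}p_{k-1}$ telescopes while the remaining $d\sum p_k$ must absorb the source $d$) or by recognizing the hypergeometric sum — and that the Gamma-ratio asymptotics $\Gamma(t+a)/\Gamma(t)=t^a(1+o(1))$ give $p_k = c\,k^{-(2d-1)/(d-1)}(1+o(1))$, with exponent in $(2,3]$ for $d\ge 2$.

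The substantive part is the deviation bound. Write $\Delta_k(n):=\Ev[\widetilde N_k(n)]-p_k(n+d+2)$. Subtracting the stationary identity from the recursion, the source terms and the main linear terms cancel, leaving
\[
 \Delta_k(n) = \Delta_k(n-1)\Bigl(1-\tfrac{A_k}{dn+1}\Bigr) + \Delta_{k-1}(n-1)\tfrac{A_{k-1}}{dn+1} + r_k(n),
\]
where $r_k(n)$ is an error of size $O(1/n)$ coming from the mismatch between $p_k(n+d+2-1)$ and $p_k(n+d+2)$ inside the recursion (i.e. from $p_k$ times the "$+1$" in $n+d+2$). Let $a_n:=\max_k |\Delta_k(n)|$. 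Using that the coefficients $1-A_k/(dn+1)$ and $A_{k-1}/(dn+1)$ are nonnegative (for $n$ large enough, which covers all but finitely many steps that contribute a bounded amount) and sum to $1-d/(dn+1)\le 1$, the triangle inequality gives $a_n \le a_{n-1} + C/n$, which already yields $a_n = O(\log n)$; a slightly more careful bookkeeping, keeping the contraction factor $1-d/(dn+1)$ and noting $\sum_k |r_k(n)| = O(1)$ rather than $O(1/n)$ per $k$ summed, improves this to $a_n = O(\sqrt{n\log n})$. \emph{The main obstacle} is making the last step honest: one must bound $\sum_k |r_k(n)|$ uniformly — this requires knowing $\sum_k p_k < \infty$ (already in hand) — and one must be careful that the recursion coefficient $1 - A_k/(dn+1)$ stays in $[0,1]$, which fails for the finitely many large $k$ with $A_k > dn+1$; those terms are handled separately by noting $\widetilde N_k(n)=0$ deterministically for $k$ exceeding the maximal possible degree at time $n$, so $\Delta_k(n)$ is controlled trivially there. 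Assembling the pieces gives the claimed $c_2\sqrt{n\log n}$ bound, and combining with Lemma \ref{706} via the triangle inequality and dividing by $n+d+2$ proves Theorem \ref{739}.
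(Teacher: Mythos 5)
Your proposal is correct and follows essentially the same route the paper takes (and defers to Frieze--Tsourakakis and to its own Lemma \ref{lem:EAN2} for): the one-step recursion for $\Ev[\widetilde N_k(n)]$ via the clique counts $A_k$, the stationary equation $(d+A_k)p_k=A_{k-1}p_{k-1}+d\,\ind_{\{k=d+1\}}$ solved by telescoping into Gamma functions, and a Gr\"onwall-type control of the error term. One harmless slip worth fixing: your crude bound $a_n\le a_{n-1}+C/n$ already yields $a_n=O(\log n)$, which is \emph{stronger} than the required $O(\sqrt{n\log n})$, so the ``more careful bookkeeping'' step is a degradation rather than an improvement and should simply be dropped (also, the chosen clique has $d+1$, not $d$, old vertices, and since $A_k-A_{k-1}=d-1$ the coefficients sum to $1-(d-1)/(dn+1)$; neither point affects the argument).
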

As mentioned above, we do not give the proof of these lemmas here. The methods however are similar then to the ones used in the proof of Lemma \ref{lem:EAN1}, \ref{lem:EAN2} for the $\EAN$ below.
Given these two lemmas, the proof of Theorem \ref{739} follows:
\begin{proof}[Proof of Theorem \ref{739}]
  By triangle inequality Theorem \ref{739} immediately follows from Lemmas \ref{706} and \ref{738} with $c=c_1+c_2$.
\end{proof}

\subsection{Proof of Theorem \ref{k14}}\label{sec::proofDegreeEAN}

We prove Theorem \ref{k14} connected to $\EAN$s again in two main steps, as in the case of $\RAN$s.
Recall the definition $p_k(n)$ from \eqref{def:pk}. We denote the additional number of vertices after $n$ steps by $N(n)$, that is, $|V(n)|=N(n)+d+2$. Note that $|V(n)|$ is random, hence here it is better to have a conditional concentration result:
Let us denote the sigma algebra generated by $\{N(1), \dots, N(n)\}$ by $\CG_n$. The following lemma tells us that the empirical proportion of degree $k$ vertices is concentrating around its $\CG_n$-conditional mean:
\begin{lemma}\label{lem:EAN1}
Fix the dimension $d\geq 2$, a constant $c>\sqrt{8}(d+1)^{3/2}$, and a sequence of node arrival probabilities $\{q_n\}_{n=1}^\infty$ such that $N(n)\to \infty$ a.s.\ as $n\to \infty$. Then
\begin{equation} \label{k15}
  \Pv \left( \max_k \big|   p_k(n)-\Ev\big[  N_k(n)| \CG_n \big]/|V(n)|\big| \geq c\sqrt{ \frac{\log  N(n)}{N(n)}}\  \Big| N(n) \right)=o(1).
\end{equation}
\end{lemma}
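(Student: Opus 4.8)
The plan is to fix a degree $k$ and the $\sigma$-algebra $\CG_n$, to prove a concentration bound for $N_k(n)$ around $\Ev[N_k(n)\mid\CG_n]$ conditionally on $\CG_n$, and then to take a union bound over $k$. Since $|V(n)|=N(n)+d+2$ (and likewise the number of active cliques present at each step) is $\CG_n$-measurable, on each atom of $\CG_n$ one has $\big|p_k(n)-\Ev[N_k(n)\mid\CG_n]/|V(n)|\big|=\big|N_k(n)-\Ev[N_k(n)\mid\CG_n]\big|/|V(n)|$, so it suffices to prove
\[
 \Pv\!\left(\big|N_k(n)-\Ev[N_k(n)\mid\CG_n]\big|>c\sqrt{N(n)\log N(n)}\ \Big|\ \CG_n\right)\le 2\,N(n)^{-c^{2}/(8(d+1)^{3})}
\]
uniformly in $k$. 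Since every degree is at most $|V(n)|-1$, the union bound over the at most $|V(n)|\le N(n)+d+2$ relevant values of $k$ costs only a polynomial factor in $N(n)$, which is beaten by the right-hand side exactly when $c^{2}/(8(d+1)^{3})>1$, i.e. $c>\sqrt8\,(d+1)^{3/2}$; averaging over the realisations of $\CG_n$ consistent with a prescribed value of $N(n)$, and using $N(n)\to\infty$ a.s., then gives \eqref{k15}.

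For the conditional bound I would, after freezing $\CG_n$, list the inserted vertices $v_1,\dots,v_{N(n)}$ in order of appearance and reveal the remaining randomness (which, conditionally on $\CG_n$, is just a uniformly chosen subset of active cliques of fixed size at each step) one insertion at a time. Let $\CF_j$ be generated by $\CG_n$ together with the hosting cliques of $v_1,\dots,v_j$ and set $M_j:=\Ev[N_k(n)\mid\CF_j]$, so that $(M_j)_{j=0}^{N(n)}$ is a Doob martingale with $M_0=\Ev[N_k(n)\mid\CG_n]$ and $M_{N(n)}=N_k(n)$. Everything then reduces to the bounded-difference estimate
\[
 |M_j-M_{j-1}|\le 2(d+1)^{3/2}\qquad\text{for every }j,
\]
since Azuma--Hoeffding with $\sum_{j=1}^{N(n)}\big(2(d+1)^{3/2}\big)^{2}=4(d+1)^{3}N(n)$ then yields precisely the displayed tail bound.

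The bounded-difference estimate is where the work lies, and I expect it to be the main obstacle. The idea is to compare two continuations of the process which agree on $\CG_n$ and on the hosts of $v_1,\dots,v_{j-1}$ but route $v_j$ into two distinct active cliques $C$ and $C'$, and to couple all subsequent insertions so that the two resulting graphs $G,G'$ end up with $|N_k(G)-N_k(G')|=O(d)$; taking conditional expectations then gives $|M_j-M_{j-1}|\le 2(d+1)^{3/2}$. Right after step $j$ the graphs differ only in the degrees of the $\le 2d$ vertices of $V(C)\triangle V(C')$, but a naive ``lockstep'' coupling fails: the block of cliques created by the perturbation keeps spawning new mismatched cliques and the number of affected vertices can grow linearly in $n$. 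What is needed instead is a coupling in the spirit of Bollob\'as--Riordan \cite{BRST,Remco} (and of Frieze and Tsourakakis \cite{Tsourakis:2012:wrongRANdiam} in the $\RAN$ case, cf. Lemma \ref{706}) that \emph{transfers} rather than accumulates the discrepancy: one exploits that the number of active cliques, hence $\sum_{v}(\text{number of active cliques containing }v)$, is a conserved $\CG_n$-measurable quantity, so that whenever the two processes would occupy non-matching cliques the small mismatch can be pushed onto a newly inserted vertex rather than being allowed to breed. Carrying this accounting through so that the set of vertices whose degrees disagree in $G$ and $G'$ stays of size bounded by a constant depending only on $d$ is the crux; the remaining ingredients -- the martingale, Azuma--Hoeffding, and the union bound over $k$ -- are routine, and the proof runs parallel to that of Lemma \ref{706}.
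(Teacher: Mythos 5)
Your architecture is the same as the paper's: reduce \eqref{k15} to a $\CG_n$-conditional concentration bound for $N_k(n)$, build a Doob martingale ending at $N_k(n)$ and starting at $\Ev[N_k(n)\mid\CG_n]$, prove a bounded-difference estimate by coupling two continuations that differ in a single placement, apply Azuma--Hoeffding, and union-bound over the at most $|V(n)|$ relevant values of $k$. (The paper filters coin flip by coin flip, which forces an extra compensating-flip argument and gives up to $(d+1)N(n)$ nonzero increments of size $2(d+1)$; your vertex-by-vertex filtration with $N(n)$ increments is legitimate and in fact cleaner, since conditionally on $\CG_n$ the hosts within a step are a uniform subset of prescribed size.) However, the one step you yourself flag as ``the crux'' --- the bound $|M_j-M_{j-1}|\le O(d)$ --- is not actually proved: you assert that the lockstep coupling fails and that some discrepancy-transferring device is needed, but you do not supply it, so as written the proof has a hole exactly where the work is.

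Moreover, your stated reason for abandoning the lockstep coupling is incorrect, and this matters because the lockstep coupling is essentially what closes the gap (and is what the paper does). Match the active cliques of the two continuations by the bijection sending $C\leftrightarrow C'$ and fixing everything else, insert all later vertices into matched cliques, and propagate the matching to the children of matched cliques in the obvious way. Matched cliques then always have vertex sets that agree outside $V(C)\cup V(C')$. Consequently every vertex inserted after $v_j$ receives degree $d+1$ in both processes and has its degree incremented simultaneously in both processes forever after; the set of vertices whose degrees disagree never grows beyond $V(C)\cup V(C')$, a set of at most $2(d+1)$ vertices, even though the number of \emph{mismatched cliques} does grow without bound. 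You have conflated ``vertices lying in mismatched cliques'' with ``vertices of differing degree''; only the latter contribute to $|N_k(G)-N_k(G')|$, which is therefore at most $2(d+1)$. Two smaller points: the target increment $2(d+1)^{3/2}$ is reverse-engineered from the constant in the statement rather than derived --- the coupling naturally yields $2(d+1)$, and with your $N(n)$-step filtration this would give $2\sum_j c_j^2=8(d+1)^2N(n)$ and hence the \emph{weaker} hypothesis $c>\sqrt{8}(d+1)$ (the constant of Lemma \ref{706}), so you should either prove the bound you actually get or explain why you inflate it; and in the union bound you should note, as the paper does, that the per-$k$ failure probability must be $o(N(n)^{-1})$ \emph{uniformly in $k$}, which your coupling bound does deliver since it is degree-independent.
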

The next lemma tells us that the $\CG_n$-conditional mean of the proportion of degree $k$ vertices is tending to $p_k$ given in \eqref{eq:pkdef}:
\begin{lemma} \label{k12}\label{lem:EAN2}
Let $d\geq 2$ and let us assume
  $q_n\to 0, \ N(n)\to \infty $ a.s. Then there exists constants $0<C_k<\infty$ for which for any $k\geq d+1$ and for any $\delta>0$
\[
  \left| \Ev\big[  N_k(n)| \CG_n\big]/|V(n)|-  p_k \right| \le C_k
  \Big(\sum_{i=1}^n \tfrac{N(i+1)-N(i)}{dN(i)}\Big)^{k-d}.
  \]
The distribution $p_k$ is the same as the asymptotic degree distribution of $\RAND$ given in \eqref{eq:pkdef}.
\end{lemma}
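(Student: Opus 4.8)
The plan rests on the observation that, once we condition on $\CG_n$, everything becomes a \emph{deterministic} recursion. Conditioning on $\CG_n$ fixes $|V(i)|$, hence the number of active cliques $A(i)=d\,|V(i)|+O(1)$ and the number $Z_{i+1}:=N(i+1)-N(i)$ of cliques selected at step $i+1$, for every $i\le n$. Write $m_k(i):=\Ev[N_k(i)\mid\CG_n]$. Given its size $Z_{i+1}$, the set $S_{i+1}$ of selected cliques is uniform among the $Z_{i+1}$-subsets of the $A(i)$ active cliques and independent of the configuration after step $i$; a degree-$k$ vertex lies in exactly $A_k$ of them, where $A_{d+1}=d+1$ and $A_k=A_{k-1}+(d-1)$, its degree jumps by $|S_{i+1}\cap C_v|\sim\mathrm{Hyp}(A(i),A_k,Z_{i+1})$, and each selected clique produces one fresh degree-$(d+1)$ vertex. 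This gives the exact recursion
\[
m_k(i+1)=\sum_{\ell\ge 0}m_{k-\ell}(i)\,h^{(i)}_{k-\ell,\ell}+\mathbf{1}\{k=d+1\}\,Z_{i+1},\qquad h^{(i)}_{m,\ell}:=\Pv\bigl(\mathrm{Hyp}(A(i),A_m,Z_{i+1})=\ell\bigr),
\]
with $m_j\equiv0$ for $j\le d$.

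Next I would linearise. With $\rho_{i+1}:=Z_{i+1}/A(i)$ one has $h^{(i)}_{k,0}=1-A_k\rho_{i+1}+O(\rho_{i+1}^2)$, $h^{(i)}_{k-1,1}=A_{k-1}\rho_{i+1}+O(\rho_{i+1}^2)$, and $\sum_{\ell\ge2}m_{k-\ell}(i)\,h^{(i)}_{k-\ell,\ell}=O(|V(i)|\rho_{i+1}^2)$. Setting $x_k(i):=m_k(i)/|V(i)|$ and $w_{i+1}:=Z_{i+1}/|V(i+1)|$, dividing by $|V(i+1)|=|V(i)|+Z_{i+1}$ and using $A(i)=d|V(i)|+O(1)$, the recursion collapses to
\[
x_k(i+1)=x_k(i)\bigl(1-c_kw_{i+1}\bigr)+\tfrac{A_{k-1}}{d}\,w_{i+1}\,x_{k-1}(i)+\mathbf{1}\{k=d+1\}\,w_{i+1}+O\!\bigl(w_{i+1}\bigl(\rho_{i+1}+A(i)^{-1}\bigr)\bigr),
\]
with $c_k:=1+A_k/d$ and $x_d\equiv0$. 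Its unique stationary sequence is $p_{d+1}=1/c_{d+1}=d/(2d+1)$ and $p_k=A_{k-1}p_{k-1}/(A_k+d)$ for $k>d+1$; telescoping these ratios identifies $\{p_k\}$ with the Gamma-quotient in \eqref{eq:pkdef}, and since $A_{k-1}/(A_k+d)=1-\frac{2d-1}{(d-1)k}\bigl(1+o(1)\bigr)$ the tail is a power law of exponent $(2d-1)/(d-1)$.

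The convergence $x_k(n)\to p_k$ then follows by induction on $k$. Subtracting the stationary identity and iterating from $i=0$,
\[
|x_k(n)-p_k|\le\prod_{i=1}^{n}(1-c_kw_i)\,|x_k(0)-p_k|+\sum_{i=1}^{n}\Bigl(\tfrac{A_{k-1}}{d}\,|x_{k-1}(i-1)-p_{k-1}|+O(\rho_i+A(i-1)^{-1})\Bigr)w_i\prod_{j=i+1}^{n}(1-c_kw_j),
\]
the factors $1-c_kw_i$ lying in $(0,1)$ once $i$ exceeds a $k$-dependent threshold (since $q_n\to0$ forces $w_n\to0$; the finitely many earlier steps only give a bounded prefactor). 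Two inputs finish the argument. First, $\sum_{i\le n}w_i=\sum_{i\le n}\frac{|V(i)|-|V(i-1)|}{|V(i)|}\to\infty$ — indeed it grows at least like a constant times $\log|V(n)|$ — precisely because $N(n)\to\infty$, which is what $\sum q_n=\infty$ guarantees via the Borel--Cantelli argument already used in the proof of Theorem~\ref{thm::MainResEAN}; hence $\prod_{j=i+1}^{n}(1-c_kw_j)\le C\bigl(|V(i)|/|V(n)|\bigr)^{c_k}$, the ``memory'' term decays like a fixed power of $|V(n)|$, and $\sum_i w_i\bigl(|V(i)|/|V(n)|\bigr)^{c_k}$ is bounded uniformly in $n$. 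Second, $\rho_i\to0$ a.s.\ (a consequence of $q_n\to0$ together with $N(n)\to\infty$): this kills the $O(\rho_i)$ contribution by splitting the sum at the index past which $\rho_i<\eps$ and estimating the finite early block by its length times $\bigl(|V(m_\eps)|/|V(n)|\bigr)^{c_k}\to0$; the $O(A(i-1)^{-1})$ term and, via the induction hypothesis, the $|x_{k-1}(i-1)-p_{k-1}|$ term are handled identically. The base case $k=d+1$ has no $x_{k-1}$ term and is otherwise the same. Together with Lemma~\ref{lem:EAN1} this proves Theorem~\ref{k14}.

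The main obstacle is the joint tuning of the ``memory factors'' $\prod_j(1-c_kw_j)$: they must decay (forcing $\sum_jw_j=\infty$, i.e.\ $N(n)\to\infty$), yet the accumulated per-step corrections, which are $O(w_j\rho_j)$, must not pile up — and this is exactly where $q_n\to0$ is indispensable, since otherwise a positive density of steps would carry non-negligible double-increment ($\ell\ge2$) corrections and the limit would drift away from the $\RAN$ value. A secondary nuisance is that the constants $C_k$ degrade with $k$ (each carries a factor $A_k\asymp k$, and they compound through the induction); this is absorbed by noting that $p_k$ and the corrections decay like a fixed power of $k$, so the $k$-growth does not spoil the maximum-over-$k$ (i.e.\ $\ell_\infty$) statement of Theorem~\ref{k14}.
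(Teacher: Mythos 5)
Your proposal is correct in substance and arrives at the same linearised recursion and the same stationary equation $p_k(d+A_k)=A_{k-1}p_{k-1}+d\,\ind_{\{k=d+1\}}$ as the paper (cf.\ \eqref{eq:pk-recursion} and \eqref{k17}), but it handles the error propagation by a genuinely different mechanism. The paper works with the \emph{unnormalised} error $\varepsilon_k(n)=\Ev[N_k(n)|\CG_n]-p_kN(n)$, throws away the contraction by bounding $(1-\hat q_n)^{A_k}<1$, and simply sums the per-step increments, which yields the explicit bound $\varepsilon_k(n)\le \varepsilon_k(0)+C_k'\big(\sum_{i\le n}\hat q_i\big)^{k-d}$ of \eqref{k8}; the conclusion of Theorem \ref{k14} then follows because this is $o(N(n))$ after normalising. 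You instead keep the contraction factors $\prod_j(1-c_kw_j)$ acting on the \emph{normalised} error $x_k(n)-p_k$ and exploit $\sum_j w_j\to\infty$ (equivalently $N(n)\to\infty$) to make the memory term decay like a power of $|V(n)|$, while $q_n\to 0$ kills the accumulated $O(w_j\rho_j)$ corrections; your bookkeeping (the bound $\prod_{j>i}(1-c_kw_j)\le C(|V(i)|/|V(n)|)^{c}$ for some $c>0$, the uniform boundedness of $\sum_i w_i(|V(i)|/|V(n)|)^{c}$, and the $\eps$-splitting of the sum) checks out. What each approach buys: yours gives direct convergence to zero of the normalised discrepancy, which is really what Theorem \ref{k14} consumes, and makes transparent exactly where $N(n)\to\infty$ versus $q_n\to 0$ enter; the paper's gives the particular quantitative form $\big(\sum_i\tfrac{N(i+1)-N(i)}{dN(i)}\big)^{k-d}$ that appears in the lemma statement, which your route does not reproduce literally (your implicit rate is governed by $\sup_{i\ge m}\rho_i$ and a negative power of $|V(n)|$ rather than by a power of $\sum_i\hat q_i$). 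Two small caveats, neither fatal and both shared with the paper: the claim that $\rho_i\to 0$ a.s.\ (the paper's $\hat q_n=q_n(1+o(1))$ a.s.) is a concentration statement that deserves a line of justification, and the uniformity over $k$ of the constants (needed for the $\max_k$ in Theorem \ref{k14}) is only sketched in both arguments.
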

\begin{proof}[Proof of Theorem \ref{k14}]
The proof of the theorem is immediate from the triangle inequality and Lemma \ref{lem:EAN1}, \ref{lem:EAN2} once we establish that (a) $N(n)\to \infty$ and (b) the error term in Lemma \ref{lem:EAN2} is at most of order $\sqrt{\log N(n)}$.
For (a) note that in each step there is always at least one active clique, hence the number of new vertices added in step $i$ is bounded from below by an indicator variable that has success probability $q_i$.
Hence, by the second Borel Cantelli lemma, $N(n)\to\infty$ a.s. as long as $\sum_{n\in\N} q_n=\infty$.

For (b) we need to compare $N(n)$ to the order of the error terms in Lemma \ref{lem:EAN2}: the number of active cliques $A(n)=dN(n)+d+1$, hence the number of new vertices after the $n+1$th step satisfies $N(n+1)-N(n)=\mathrm{Bin}(dN(n)+d+1, q_n)$, with $N(1):=\mathrm{Bin}(d+1, q_1)$.
Now we can approximate $N(n)$ by a continuous function to estimate the error terms in Lemma \ref{lem:EAN2}:
\[
  \sum_{i=1}^n \frac{N(i+1)-N(i)}{dN(i)}\le \frac{C}{d}\int_1^n \frac{N'(x)}{N(x)} \le C \log N(n).
\]
and therefore for any fixed $k$
\[
  C_k\Big(\sum_{i=1}^n \tfrac{N(i+1)-N(i)}{dN(i)}\Big)^{k-d}\le \Big(C_k\log N(n)\Big)^{k-d}=o(\sqrt{\log N(n)}),
 \]
 where the constants $C, C_k>0$ can change along the lines.
Hence the statement of Theorem \ref{k14} follows by combining Lemmas \ref{lem:EAN1} and  \ref{lem:EAN2} by a triangle inequality.
\end{proof}
\begin{remark}[The order of magnitude of $|V(n)|$]
\normalfont It is elementary to show that the nonnegative martingale
\be \label{103} M_n'= N(n) \prod_{i=1}^{n-1} (1+d q_i)^{-1}\ee
is square integrable if $\sum_{n\in \N} q_n=\infty$ and so there exists a $\xi\ge 0$ random variable
\[
\frac{N(n)}{\prod_{i=1}^{n-1} (1+d q_i)}\to \xi \quad a.s.
\]
\end{remark}
\begin{claim}\label{k19}
  The series $p_k$ given in \eqref{eq:pkdef} is a probability distribution.
\end{claim}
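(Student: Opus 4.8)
The plan is to check the two defining properties separately: each $p_k$ is nonnegative, and the $p_k$ sum to one. Nonnegativity is immediate. For $k\ge d+1$ and $d\ge 2$ every argument of the Gamma functions in \eqref{eq:pkdef} is strictly positive (indeed $k-d\ge 1$ and $\tfrac{2}{d-1},\tfrac{d+2}{d-1}>0$), hence all the Gamma values are positive and $p_k>0$.

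For the normalization I would first rewrite \eqref{eq:pkdef} in the compact form
\[
p_k = C\,\frac{\Gamma(j+a)}{\Gamma(j+b)},\qquad j:=k-d\ge 1,
\]
with $a:=\tfrac{2}{d-1}$, $b:=\tfrac{2d+1}{d-1}$ and $C:=\tfrac{d}{2d+1}\,\tfrac{\Gamma(1+b)}{\Gamma(1+a)}$; this is exactly the second displayed expression for $p_k$ in \eqref{eq:clust}, using $2+\tfrac{d+2}{d-1}=1+\tfrac{2d+1}{d-1}$ and $k+1-d+\tfrac{d+2}{d-1}=j+b$. Note $b-1-a=\tfrac{d}{d-1}>0$. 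The key step is the telescoping identity, obtained from $\Gamma(x+1)=x\Gamma(x)$:
\[
\frac{\Gamma(j+a)}{\Gamma(j+b)}=\frac{1}{b-1-a}\left(\frac{\Gamma(j+a)}{\Gamma(j+b-1)}-\frac{\Gamma(j+1+a)}{\Gamma(j+b)}\right),
\]
verified by factoring $\Gamma(j+a)/\Gamma(j+b-1)$ out of the bracket and using $\Gamma(j+1+a)=(j+a)\Gamma(j+a)$ and $\Gamma(j+b)=(j+b-1)\Gamma(j+b-1)$. Summing over $1\le j\le N$, the right-hand side collapses to $\tfrac{1}{b-1-a}\big(\tfrac{\Gamma(1+a)}{\Gamma(b)}-\tfrac{\Gamma(N+1+a)}{\Gamma(N+b)}\big)$; the tail term satisfies $\Gamma(N+1+a)/\Gamma(N+b)\sim N^{-(b-1-a)}=N^{-d/(d-1)}\to 0$ by the asymptotic $\Gamma(t+s)/\Gamma(t)=t^s(1+o(1))$ already invoked in the statement. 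Hence $\sum_{j\ge 1}\Gamma(j+a)/\Gamma(j+b)=\tfrac{1}{b-1-a}\cdot\tfrac{\Gamma(1+a)}{\Gamma(b)}$, and multiplying by $C$ together with $\Gamma(1+b)=b\,\Gamma(b)$ and $b-1-a=\tfrac{d}{d-1}$ yields
\[
\sum_{k\ge d+1}p_k=\frac{d}{2d+1}\cdot\frac{\Gamma(1+b)}{\Gamma(1+a)}\cdot\frac{d-1}{d}\cdot\frac{\Gamma(1+a)}{\Gamma(b)}=\frac{d-1}{2d+1}\,b=\frac{d-1}{2d+1}\cdot\frac{2d+1}{d-1}=1.
\]

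There is no serious obstacle here; the argument is essentially bookkeeping. The only points requiring care are matching the two equivalent Gamma-ratio forms of $p_k$ to the compact parametrization in terms of $a$ and $b$, and justifying that the boundary term $\Gamma(N+1+a)/\Gamma(N+b)$ vanishes — which rests precisely on $b-1-a=\tfrac{d}{d-1}>0$, i.e.\ on the exponent $\tfrac{2d-1}{d-1}$ exceeding $2$, the same inequality underlying the power-law claim in Theorem \ref{739}.
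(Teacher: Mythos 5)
Your proof is correct and follows essentially the same route as the paper: both rewrite each term $\Gamma(j+a)/\Gamma(j+b)$ as $\tfrac{d-1}{d}$ times a telescoping difference of consecutive Gamma ratios and sum. The only (welcome) addition is that you explicitly verify the boundary term $\Gamma(N+1+a)/\Gamma(N+b)\sim N^{-d/(d-1)}\to 0$, a step the paper leaves implicit in the phrase ``the last sum is telescopic.''
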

\begin{proof}[Proof of Claim \ref{k19}]
Clearly $p_k\geq 0$. Combining the formula for $p_k$ in \eqref{eq:pkdef} with an elementary rewrite of the fraction of the Gamma-functions inside the sum yields
\[ \begin{aligned}
  \sum_{k=d+1}^{\infty}&{ p_k}=\frac{d}{2d+1}\frac{ \Gamma \left( 1\!+\!\tfrac{2d+1}{d-1}\right) }{\Gamma \left( 1\!+\!\tfrac{2}{d-1}\right) } \sum_{k=d+1}^{\infty}{\frac{\Gamma \left( k\!-\!d\!+\tfrac{2}{d-1}\right) }{\Gamma \left( k\!-\!d\!+\!\tfrac{2d+1}{d-1}\right) }}\\
  &=\frac{d}{2d\!+\!1}\frac{\Gamma \left( 1\!+\!\frac{2d+1}{d-1}\right) }{\Gamma \left( 1\!+\!\frac{2}{d\!-\!1}\right) }
  \cdot \sum_{k=d+1}^{\infty}{\frac{d\!-\!1}{d}\left( \frac{\Gamma \left( k\!-\!d\!+\!\frac{2}{d\!-\!1}\right) }{\Gamma \left( k\!-\!1\!-\!d\!+\!\frac{2d\!+\!1}{d\!-\!1}\right) }-
  \frac{\Gamma \left( k\!+\!1\!-\!d\!+\!\frac{2}{d\!-\!1}\right) }{\Gamma \left( k\!-\!d\!+\!\frac{2d\!+\!1}{d\!-\!1}\right)}\right) }=1,
\end{aligned} \]
since the last sum is telescopic.
\end{proof}


Now we prove Lemmas \ref{lem:EAN1} and \ref{lem:EAN2}. To do this, the following observations will be useful. By construction, there are $d+1$ active cliques at time $n=0$. When a new node is born the number of active cliques increases by $d+1-1=d$, thus at time $n$ there are
\be\label{eq::ean-active}
    A(n)=N(n)d + d +1
\ee
active cliques given $N(n)$, the number of non-initial nodes at time $n$. By the same argument as in \eqref{eq::ean-active-k}, a node with degree $k\geq d+1$ is contained in exactly
   $A_k=2+(k-d)(d-1)$
active cliques.

\begin{proof}[Proof of Lemma \ref{lem:EAN1}]
We prove the Lemma \ref{lem:EAN1} by using the Azuma\,--\,Hoeffding inequality in an elaborate way. 
Let us use the notation
$ V(n):=\sqrt{  N(n) \log   N(n)}$, 
and recall the sigma algebra $\CG_n$.
We aim to show that there exist a constant $c>0$ such that
\be\label{eq::eanaim}
  \Pv \left( \max_k \left|   N_k(n)-\Ev\big[  N_k(n)| \CG_n\big]\right| \geq c  V(n) \;\Big\vert\;  \CG_n\right)=o(1).
\ee
Taking conditional expectation w.r.t.\ $N(n)$ in \eqref{eq::eanaim} immediately gives Lemma \ref{lem:EAN1}. First note that at time $n$ the maximal degree of any vertex is $  N(n)+d-3$. Thus the left hand side of \eqref{eq::eanaim} is at most
\[ 
\sum_{k=d+1}^{  N(n)+d-3}\Pv\left( |   N_k(n)-\Ev[  N_k(n)|\CG_n]| \geq c  V(n) \mid  \CG_n\right ).
 \]
Since there are $  N(n)+d-3$ summables, it is enough to prove that \emph{uniformly in $k$} with $d+1\leq k\leq   N(n)+d-3$
\begin{equation} \label{713}
  \Pv \Big(|   N_k(n)-\Ev[  N_k(n)|\CG_n]| \geq c  V(n) \Big)=o\left(   N(n)^{-1}\right).
\end{equation}
For a fixed time $r$, let us fix an ordering of the cliques of the graph $\mathrm{EAN}_d(r,\{q_n\})$. Clearly, the number of active cliques $A(r)< (d+1)^r$. To get $\EAND(r+1)$ we draw an independent Bernoulli($q_{r+1}$) random variable for every clique in $\mathrm{EAN}_d(r,\{q_n\})$.
Hence,
for  $1\leq r\leq n$ and $0\leq s\leq A(r)$, it is reasonable to introduce   $\mathcal{F}_{r,s}$, the $\sigma$-algebra generated by the graph at time $r-1$ and the first $s$ coin flips at time $r$ and $\CG_n$. It is straightforward that $\CG_n=\mathcal{F}_{1,0}\subseteq \dots \subseteq \mathcal{F}_{1,d+1} \subseteq \mathcal{F}_{2,0} \subseteq \dots \subseteq \mathcal{F}_{n,A(n)}$. With this filtration, let us introduce the following Doob-martingale:
\[
   {M}_{r,s}=\Ev\left[  N_k(n) \mid \mathcal{F}_{r,s}\right],
\]
where $k$ is fixed. Clearly $ {M}_{1,0}=\Ev\left[  N_k(n) \mid  \CG_n\right]$, and $ {M}_{n,  A(n)}=  N_k(n)$. Now, we would like to estimate the difference between $ {M}_{r,s}$ and $ {M}_{r,s-1}$. We will see that
\begin{equation} \label{k11}
  \left|  {M}_{r,s}- {M}_{r,s-1}\right| \leq 2(d+1)\ \ \forall\  r\in \{ 1,\dots ,n\},\ \forall\  1\leq s<   A(r)\leq (d+1)^r.
\end{equation}
From the definition of $  M_{r,s}$, we see that the difference is caused by the extra information whether the $s$-th coin flip raises a new node or not. Let us consider the two different realizations, i.e.\ in $\EAN(r,s)_a$ there is a new node $v_{r,s}$ at the $s$-th coin flip and in $\EAN(r,s)_b$ it is not. Note that the number $  N(r+1)-  N(r)$ of new nodes at time $r$ is \emph{included in the $\sigma$ algebra} and therefore there must be an $s^\prime$ with $s\!<\!s^\prime\!<\!  A(r)$ that at the $s^\prime$th coin flip a new node $v_{r,s^\prime}$ will born in $\EAN(r,s)_b$ but not in $\EAN(r,s)_a$. Hence the graphs $\EAN(r+1,0)_a$ and $\EAN(r+1,0)_b$ might be coupled in such a way that \emph{the number of nodes are the same} and \emph{every node has the same degree  except} for the $d+1$ neighbors of $v_{r,s}$ in $\EAN(r,s)_a$ and the $d+1$ neighbors of $v_{r,s^\prime}$ in $\EAN(r,s)_a$. Since the degree of vertices that are born later than $r$ are not affected by what happens before time $r$, we can extend this coupling up to time $n$ such that there are at most $2(d+1)$ nodes with different degrees. Thus, taking expectation with respect to $\mathcal F_{r,s-1}$ conserves this difference, which implies \eqref{k11}.

We have just proved that $  M_{r,s}$ is a martingale with bounded increments. Observe that every new node will create $d+1$ new active cliques and then induce $d+1$ coin flips. Thus there are less than $(d+1)  N(n)$ coin flips until time $n$ and so thus $|  {M}_{r,s}- {M}_{r,s-1}| \neq 0$ only at most $(d+1)  N(n)$ times. Hence the Azuma\,--\,Hoeffding inequality gives us that
\[
  \Pv \left(|   N_k(n)-\Ev\big[  N_k(n)|  \CG_n\big]| \geq a \;\Big\vert\;  \CG_n\right)\leq 2\exp\Big\{ -\frac{a^2}{8  N(n)(d+1)^3}\Big\}.
\]
Now set $a=c  V(n)$, $c>\sqrt{8}(d+1)^{3/2}$: 
\[
\Pv \bigg(\!\Big|   N_k(n)-\Ev\big[  N_k(n)|\CG_n\big]\Big|\! \geq\! c  V(n) \mid \CG_n\!\bigg)\! \leq 2   N(n)^{\!-\tfrac{c^2}{24(d+1)^2}}\!\! \leq o\! \left(   N(n)^{-1} \right).
\]
Note that this bound is \emph{uniform in $k$}, hence \eqref{713} and \eqref{eq::eanaim} follows.
\end{proof}
 \begin{proof}[Proof of Lemma \ref{k12}]
 We aim to write a recursion for $\Ev[N_k(n)|\CG_n]$. Note that $A(n),N(n+1)-N(n)\in \CG_{n+1}.$
 Let us introduce the $n$th empirical occupation parameter:
 \[ \hat q_n:= \frac{N(n+1)-N(n)}{d N(n) + d+1 }= \frac{1}{A(n)}\sum_{i=1}^{A(n)} \ind_{ \{\text{the $i$th triangle is filled}\} }. \]
 Since we assume $N(n)\to \infty$, $ \hat q_n=q_n(1+o(1))$ a.s.
Given that there are $N(n+1)-N(n)$ successes in $A(n)$ Bernoulli trials, the places of these successful trials are uniformly distributed.
Clearly, $N_k(n)$ can change in three different ways in the $n+1$-th step:
\begin{enumeratei}
	\item A new node can connect to a vertex with degree $k$. Since a vertex with degree $k$ is contained in $ A_k$ active cliques (see \eqref{eq::ean-active}) the degree stays $k$ from step $n$ to $n+1$ with this happens with $\CG_n$-conditonal probability  $(1-\hat q_n)^{  A_k}$.
	\item A degree of a vertex can increase to $k$. A vertex with degree $k-\ell$, $\ell=1,\dots,\frac{d-1}{d}k$ is contained in $  A_{k-\ell}$ many active cliques, hence this happens with $\CG_n$-conditonal probability $\binom{  A_{k-\ell}}{\ell}\hat q_n^{\ell}(1-\hat q_n)^{  A_{k-\ell}-\ell}$.
	\item When $k=d+1$, $N_{d+1}(n)$ grows by the number of new nodes $N(n+1)-N(n)$
	\end{enumeratei}
 Hence we can write the following conditional recursion:
 \be\label{eq:conditional-recursion}\ba
\Ev&[N_k(n+1)|\CG_{n+1}]= \Ev[N_k(n)| \CG_n ] \left(1-\hat q_n\right)^{A_k} + \Ev[N_{k-1}(n)| \CG_n] A_{k-1} \hat q_n (1-\hat q_n)^{A_{k-1}-1} \\
& + \sum_{\ell=2}^{(d-1)k/d}  \Ev[N_{k-\ell}(n)| \CG_n] { A_{k-\ell} \choose \ell } \hat q_n^\ell (1-\hat q_n)^{A_{k-\ell}-\ell} + \ind_{\{k=d+1\}} (N(n+1)- N(n))
\ea
\ee
Now, we first try to find the `stationary solution' of this recursion in the form $\Ev[N_k(n)|\CG_n]=p_k N(n)$. Then, series expansion in the first term on the right hand side yields that the limiting distribution $p_k$ should satisfy:
\be\ba  &p_k(N(n+1)-N(n)) = -p_k N(n)A_k \hat q_n  + p_{k-1} N(n) \hat q_n A_{k-1}+ O(N(n)\hat q_n^2 (A_k^2+ A_{k-1}))\\
 &+ \sum_{\ell=2}^{(d-1)k/d} p_{k-\ell} N(n) \hat q_n  { A_{k-\ell} \choose \ell } \hat q_n^{\ell-1} (1+o(1))+ \ind_{\{k=d+1\}} (N(n+1)- N(n))
\ea\ee
Multiply both sides by $(N(n)\hat q_n)^{-1}$, and use that $N(n+1)-N(n)=\hat q_n A(n)$:
\be p_k \frac{A(n)}{N(n)} = -p_k A_k + p_{k-1} A_{k-1}  + \ind_{\{k=d+1\}}\frac{ A(n)}{N(n)}+ O( C_k \hat q_n) \ee
where $C_k$ contains the coefficients of all the smaller order terms.
Since $A(n)/N(n)\to d$ as $N(n)\to \infty$, and $\hat q_n= q_n(1+o(1)) \to 0$ as $n\to 0$, the limiting distribution $p_k$ should satisfy
\be\label{eq:pk-recursion} p_k (d+A_k) = p_{k-1} A_{k-1} + d \ind_{\{k=d+1\}}.\ee
Using the formula for $A_k$ in \eqref{eq::ean-active-k} we equivalently have
\be p_k = p_{k-1} \frac{(k-1)(d-1) - d^2 + d +2}{k(d-1) - d^2 +2d +2} + \ind_{\{ k=d+1\}}\frac{d}{2d+1}. \ee
The solution of this recursion is
\begin{equation} \label{k17}
    p_k=p_{d+1}\prod_{\ell=d+2}^k{\frac{\ell-1-d+\frac{2}{d-1}}{\ell-d + \frac{d+2}{d-1}}}=\frac{d}{2d+1}\frac{\Gamma(k-d+\frac{2}{d-1})}{\Gamma(1+\frac{2}{d-1})}\: \frac{\Gamma(2+\frac{d+2}{d-1})}{\Gamma(k+1-d +\frac{d+2}{d-1})},
\end{equation}
and hence by the properties of Gamma-function we obtain that
\[
    p_k\sim const\cdot k^{-\frac{2d-1}{d-1}},
\]
i.e.\ the `stationary solution' has a power-law decay with exponent in $(2,3]$ for $d\geq2$.
Now, the recursion \eqref{eq:conditional-recursion} initially is not stationary, hence we still need to show that
\begin{equation} \label{k3}
    \varepsilon_k(n):=\Ev[  N_k(n)|\CG_n]-  p_k N(n)
\end{equation}
is tending  to zero conditionally on $\CG_n$.
Using \eqref{eq:conditional-recursion} and \eqref{eq:pk-recursion} it is elementary to check that the following recursion holds for the error terms defined in \eqref{k3}:
\begin{equation} \begin{aligned} \label{eq:epsilon-recursion}
    \varepsilon_k&(n+1) =   \varepsilon_k(n)(1-\hat q_n)^{A_k}
  +   \varepsilon_{k-1}(n) A_{k-1}\hat q_n(1-\hat q_n)^{A_{k-1}-1}\\
  &-    p_k\left( N(n+1)-N(n)(1-\hat q_n)^{A_{k}}-(d+A_k) \hat q_n N(n) \right)\\
  &-     p_{k-1} N(n) \hat q_n A_{k-1}\left(1- (1-\hat q_n)^{A_{k-1}-1}\right) -\ind_{\{ k=d+1\} }( d N(n)-A(n))\hat q_n. \\
  &+ \sum_{\ell=2}^{\frac{d-1}{d}k}\Ev[  N_{k-\ell}(n)|\CG_n]\binom{A_{k-\ell}}{\ell}\hat q_n^{\ell}(1-\hat q_n)^{A_{k-\ell}-\ell}.
\end{aligned} \end{equation}
Let us denote by $\Delta_k(n):=|\varepsilon_k(n+1)-  \varepsilon_k(n)(1-\hat q_n)^{A_k}|$, i.e.\ the absolute value of the sum of the terms on the right hand side except the first term.
Since $(1-\hat q_n)^{A_k}<1$, we immediately get the upper bound
\[  \varepsilon_k(n+1) \le \varepsilon_k(0)+\sum_{i=1}^{n} \Delta_k(i). \]
With series expansion in the third and fourth term in \eqref{eq:epsilon-recursion} and the identity $dN(n)-A(n)=d+1$ yields the upper bound
\be\label{eq:delta1}\ba \frac{\Delta_k(n)}{N(n) \hat q_n }&\le \frac{\ve_{k-1}(n)}{N(n)} A_{k-1}+  p_k \left( \frac{d+1}{N(n)} + O(\hat q_n^2 A_k^2) \right) +\ind_{\{ k=d+1\} }\frac{d+1}{N(n)} \\
&+ p_{k-1} \hat q_n A_{k-1}^2 + \sum_{\ell=2}^{\frac{d-1}{d}k}\binom{A_{k-\ell}}{\ell}\hat q_n^{\ell-1} , \ea\ee
where we used that $\Ev[N_k(n)|\CG_n]\le N(n)$ holds for all $n,k$. Now clearly we have
 \[\Delta_{d+1}(i)=\hat q_i (d+1)(1+p_{d+1})+o(\hat q_i)\] for any $i\in \N$. Note that $\ve_{d+1}(0)\le d+2$ hence for some constants $C_{d+1}< C_{d+1}'<\infty$
 \[ \ve_{d+1}(n)\le d+2+C_{d+1} \sum_{i=1}^{n-1}\hat q_i(1+o(1))\le C_{d+1}'  \sum_{i=1}^{n-1}\hat q_i.\] Let us inductively assume that $\ve_{k-1}(i)\le  \ve_{k-1}(0)+C_{k-1}' \Big(\sum_{j=1}^{i-1}\hat q_j\Big)^{k-1-d}$ holds uniformly in $i$.
To carry out the inductive step for $k$ we can sum up \eqref{eq:delta1} and use the induction hypothesis to get
\be\label{k8}\ba
  \ve_k(n)&=\ve_k(0)+\sum_{i=1}^{n-1}\hat q_i \left( A_{k-1} C_{k-1}' \Big(\sum_{j=1}^{i-1} \hat q_j\Big)^{k-1-d} + c_{k,1} + c_{k,2} \hat q_i\right) \\
  &\le \ve_{k}(0) +c_{k,3} \sum_{i=1}^{n-1}\hat q_i + c_{k,4}\Big(\sum_{j\le i}^{n-1} \hat q_i\Big)^{k-d}\le \ve_{k}(0) + C_k'\Big(\sum_{i=1}^{n-1}\hat q_i \Big)^{k-d}, \ea
\ee
where in the second sum we can use that $\max \hat q_i \le 1$. Hence the induction step is satisfies, so we have just shown that $\ve_{k}(n)\le \ve_{k}(0) + C_k'\Big(\sum_{i=1}^{n-1}\hat q_i\Big)^{k-d}$ holds uniformly in $n$, which finishes the proof. \end{proof}

\bibliographystyle{abbrv}
\bibliography{bibliography}
\addcontentsline{toc}{section}{References}

\end{document}